\newtheorem{theorem}{Theorem}
\newtheorem{corollary}{Corollary}
\newtheorem{lemma}{Lemma}
\newtheorem{definition}{Definition}
\newtheorem{assumption}{Assumption}
\theoremstyle{remark}
\newtheorem{remark}{Remark}
\DeclareMathOperator{\sign}{sign}
\def\B{\mathcal{B}}
\def\F{\mathcal{F}}
\def\E{\mathcal{E}}
\def\X{\mathcal{X}}
\def\U{\mathcal{U}}
\def\bR{\mathbb{R}}
\def\bN{\mathbb{N}}
\def\bone{\mathbf{1}}
\def\bP{\mathbb{P}}
\def\bQ{\mathbb{Q}}
\def\diag{\mathrm{diag}}
\mathchardef\mhyphen="2D
\def\tloc{t\mhyphen\mathrm{loc}}
\begin{document}
\title{Ergodic BSDEs driven by Markov Chains}
\author{Samuel N. Cohen\\ University of Oxford \\ \\ Ying Hu\\ Universit\'e de Rennes I}

\date{\today}

\maketitle

\begin{abstract}
We consider ergodic backward stochastic differential equations, in a setting where noise is generated by a countable state uniformly ergodic Markov chain. We show that for Lipschitz drivers such that a comparison theorem holds, these equations admit unique solutions. To obtain this result, we show by coupling and splitting techniques that uniform ergodicity estimates of Markov chains are robust to perturbations of the rate matrix, and that these perturbations correspond in a natural way to EBSDEs. We then consider applications of this theory to Markov decision problems with a risk-averse average reward criterion.

\noindent Keywords: Ergodic BSDE, Markov Chain, Uniform Ergodicity, Nummelin Splitting, Risk averse control\\
MSC:	60J27, 93E20, 49J55
\end{abstract}

\section{Introduction}

Much work has gone into understanding optimal control with an average cost criterion, over an infinite horizon (for example, see the review paper Arapostathis et al. \cite{Arapostathis1993}, or more recent work by Guo and Hern\'andez-Lerma \cite{Guo2001}, and references therein). Provided an underlying controlled Markov process, this criterion provides a useful method for understanding those payoffs which value the future as much as the present, and hence are insensitive to short-term effects. Much of this work is constrained to look only at costs which depend on the current state of the process, and at the (linear) expectation of future costs; this is, in this sense, a linear theory (to be precise, the corresponding Hamiltonian operators are infima taken over linear functions in the costate variable, see Section \ref{sec:control}). Therefore, these methods are unable to deal adequately with risk-averse optimization, which requires a \emph{nonlinear} assessment of future costs.

 Given the known connection between BSDEs and the theory of `nonlinear expectations', as defined by Peng \cite{Peng1997} (see Coquet et al.\cite{Coquet2002} and Cohen \cite{Cohen2011a} for the precise details of this connection), it is not unreasonable to expect that `ergodic' BSDEs would provide a natural framework for understanding these nonlinear settings. 

In Fuhrman, Hu and Tessitore \cite{Fuhrman2009} (see also Richou \cite{Richou2009}, Debussche, Hu and Tessitore \cite{Debussche2011}), a class of value functions are considered which depend on the average cost, not only through the current state, but on the controlled stochastic dynamics, and can do so in a nonlinear way. These value functions are given by Ergodic Backward Stochastic Differential Equations (EBSDEs), a generalisation of the Backward Stochastic Differential Equations developed by Pardoux and Peng \cite{Pardoux1990}. In \cite{Fuhrman2009}, the stochastic dynamics are given with reference to a general cylindrical Wiener process in a Hilbert space, and the `current state' is described by a geometrically ergodic solution to a forward stochastic differential equation.

In this paper, we consider the ergodic BSDEs when noise is generated by a continuous-time discrete-state Markov chain. The basic BSDEs of this type, for finitely many states, were considered by one of the authors in \cite{Cohen2008, Cohen2008b, Cohen2011b}. We shall show that, with a discounted criterion, the infinite-horizon version of these BSDEs admits Markovian and time-invariant solutions. Assuming the underlying chain is uniformly ergodic, we show that discounted BSDEs preserve uniform ergodicity, in an appropriate sense. From here, we show that the ergodic BSDEs admit unique solutions.

The paper is organized as follows. In the next section, we deal with the theory of discounted BSDEs. In Section \ref{sec:ergodic}, we show the robustness of ergodicity estimates of Markov chains to perturbations of the rate matrix. We give a novel, but natural, partial ordering of rate matrices, and show that, given the perturbation is not too large as determined by this ordering, any ergodicity estimates are transferable. This result is itself of independent interest in the study of ergodic properties of Markov chains. Section \ref{sec:EBSDEs} proves the existence and uniqueness of bounded Markovian solutions to EBSDEs. Finally, in Section \ref{sec:example}, a numerical example and some applications of these equations in optimal ergodic control are discussed. We conclude with some thoughts on future extensions in Section \ref{sec:conclusion}.

\subsection{Introducing BSDEs and EBSDEs on Markov Chains}

Consider a continuous-time countable-state process $X$ in a probability space $(\Omega, \F, \bP)$, where $X$ will be our fundamental Markov chain.  Without loss of generality, we shall represent $X$ as taking values from the standard basis vectors $e_i$ of $\bR^N$ (where $N\in\bN\cup\{\infty\}$ is the number of states, and $\bR^\infty$ denotes the space of infinite real sequences). We write $\X$ for this set of basis vectors. For notational simplicity, we will think of all vectors as column vectors, and denote by $z^*$ the transpose of $z$ (so that $z^* y$ is the Euclidean or $\ell_2$ inner product). An element $\omega\in\Omega$ can be thought of as describing a path of the chain $X$.

Now let $\{\F_t\}$ be the completion of the filtration generated by $X$, that is,
\[\F_t = \sigma(\{X_s\}_{s\leq t}) \vee \{B\in\F_\infty:\bP(B)=0\}.\]
 As $X$ is a right-continuous pure jump process which does not jump at time $0$, this filtration is right-continuous, and we assume $\F=\F_\infty=\bigvee_{t<\infty}\F_t$. We shall assume that $X$ is a Markov chain under $\bP$, in the filtration $\{\F_t\}$.  For basic theory of continuous-time countable-state Markov chains, see for example Rogers and Williams \cite[Vol. 1, p228ff]{Rogers2000}, for the approach taken here, see Elliott, Aggoun and Moore \cite[Part III]{Elliott1994a}).

Let $A$ denote the possibly infinite rate matrix\footnote{In our notation, as in \cite{Elliott1994a}, $A$ is the matrix with entries $A_{ij}$, where $A_{ij}$ is the rate of jumping from state $j$ to state $i$. Depending on the convention used, this is either the rate matrix or its transpose. In our notation $A^*$, the transpose of $A$, is the generator of the Markov chain.} of the chain $X$. Note that $(A_t)_{ij}\geq 0$ for $i\neq j$ and $\sum_i (A_t)_{ij} = 0$ for all $j$ (the columns of $A$ all sum to $0$). We assume, for simplicity, that the entries in $A_t$ are uniformly bounded, and so the chain is regular.

From the Doob--Meyer decomposition (see \cite[Appendix B]{Elliott1994a}), we write our chain in the following way
\begin{equation}\label{ref:DMrepMarkovChain}
X_t = X_0 + \int_{]0,t]} A_u X_{u-} du + M_t,
\end{equation}
where $M$ is a locally-finite-variation pure-jump martingale in $\bR^N$, and the chain starts in state $X_0\in\bR^N$. Our aim is to study EBSDEs, that is, infinite-horizon equations of the form
\begin{equation}\label{eq:EBSDE}
 Y_t = Y_T+\int_t^T[f(X_{u-}, Z_u)-\lambda] du - \int_t^T Z_u^* dM_u, \qquad 0\leq t\leq T< \infty,
\end{equation}
where $f:\X\times\bR^N \to \bR$ is a given function, $Y$ is a real-valued c\`adl\`ag stochastic process, $Z$ is a predictable process in $\bR^N$ such that
\[\int_0^t Z_u^*dM_u := \sum_i \int_0^t (Z_u)^i d(M)^i_u\]
is a martingale, square integrable up to finite times, (here $(\cdot)^i$ denotes the $i$th component of the vector), and $\lambda\in\bR$. The key points distinguishing these equations from `classical' BSDEs on Markov chains (as considered in \cite{Cohen2008b}) are that this must hold both for all $t$ and for all $T$, and that the value $\lambda$ is a part of the solution.

Our key method will be based on considering the limits of the following `discounted' BSDEs,
\begin{equation}\label{eq:DBSDE}
 Y_t^{\alpha} = Y_T^{\alpha} +\int_t^T[f(X_{u-}, Z_u^{\alpha} )-\alpha Y^{\alpha}_{u-}] du - \int_t^T (Z^{\alpha}_u)^*dM_u, \qquad 0\leq t\leq T< \infty,
\end{equation}
the existence of solutions to which we shall first establish.

\begin{remark}
 We note that the use of left limits for $X$ and $Y$ in the driver terms of (\ref{eq:EBSDE}), (\ref{eq:DBSDE}) initially seems unconventional, for those used to the theory of BSDEs in a Brownian setting. However, it is the natural approach when the driver term can itself jump (see \cite{Cohen2010}), it allows us to perform Girsanov transformations without constantly having to switch between the left and right limits of the processes, and as the integral is with respect to Lebesgue measure and our processes have at most countably many jumps, in this case the equation is unchanged whether the left limits are included or not.
\end{remark}

Of importance will be the following process and the associated spaces.
\begin{definition}
 Let
\[\psi_t^x:= \diag(A_t x) - A_t \diag(x) - \diag(x) A_t^*,
\]
for $x$ a basis vector of $\bR^N$. Write $\psi_t$ for the process $\psi_t^{X_{t-}}$. Then $\psi$ is a predictable process taking values in the symmetric, positive semidefinite matrices in $\bR^{N\times N}$, with the property that
\[E\Big[\Big(\int_{]0,t]} Z_u^*dM_u\Big)^2\Big] = \int_{]0,t]} E[Z^*_u \psi_u Z_u] du\]
for any $t$ and any predictable processes $Z$ of correct dimension (see \cite{Cohen2008}). For simplicity, we write
\[\|z\|^2_{M_t} := z^*\psi_t z,\]
and note that this is a stochastic seminorm.

We define the following spaces of processes.
\begin{itemize}
 \item $Y\in S^2$ if $E[\sup_t Y^2_t]<\infty$ and $Y$ is c\`adl\`ag,
 \item $Z\in H^2_M$ if $E\big[\int_{]0,\infty]} \|Z_t\|_{M_t}^2 dt\big] < \infty$ and $Z$ is predictable,
 \item $Z\in H^2_{M,\tloc}$ if $E\big[\int_{]0,t]} \|Z_s\|_{M_s}^2 ds\big] < \infty$ for all $t<\infty$ and $Z$ is predictable (note that this is not the usual space of processes locally in $H^2_M$, as this must hold for every deterministic $t$, rather than for a specific sequence of stopping times),
 \item $Z\sim_M Z'$ if $\|Z_t-Z'_t\|_{M_t}=0$ for almost all $t$.
\end{itemize}
\end{definition}

\section{Discounted BSDE}
We begin with the following result.

\begin{theorem}\label{thm:basicBSDEexist}
Let $T$ be a finite deterministic time, and $f:\Omega\times[0,T]\times\bR\times\bR^N \to\bR$ be a predictable function. If $f$ is uniformly Lipschitz in $y$ and $z$, that is, there exists a constant $c>0$ such that
\[|f(\omega, t, y, z)- f(\omega,t,y', z')| \leq c(|y-y'| + \|z-z'\|_{M_t}),\]
and
\[E\Big[ \int_{]0,T]} |f(\omega, t, 0, 0)|^2 dt \Big] < \infty\]
then for any $\xi\in L^2(\F_T)$, there exists a unique solution $(Y,Z) \in S^2 \times H^2_M$ to the BSDE
\[\xi=Y_t- \int_{]t,T]} f(\omega, u, Y_u, Z_u) du + \int_{]t,T]} Z_u^* dM_u.\]
\end{theorem}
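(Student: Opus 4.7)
The plan is to prove existence and uniqueness by a standard Picard fixed-point argument on a suitably weighted subspace of $S^2\times H^2_M$, using the Markov chain martingale representation theorem (from \cite{Cohen2008}) to invert the stochastic-integral part.

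Given $(y,z)\in S^2\times H^2_M$, define $\Phi(y,z)=(Y,Z)$ as follows. The process
\[\eta_t:=E\Big[\xi+\int_{]0,T]} f(\omega,u,y_u,z_u)\,du\;\Big|\;\F_t\Big]\]
is a square-integrable martingale, because the Lipschitz bound and the integrability of $f(\cdot,0,0)$ together with $(y,z)\in S^2\times H^2_M$ guarantee the random variable inside the conditional expectation is in $L^2$. By the martingale representation theorem for the chain, there is a unique (up to $\sim_M$) predictable $Z$ with $E[\int_{]0,T]}\|Z_u\|_{M_u}^2du]<\infty$ such that $\eta_t=\eta_0+\int_{]0,t]}Z_u^*dM_u$. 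Set $Y_t:=\eta_t-\int_{]0,t]}f(\omega,u,y_u,z_u)du$; then $(Y,Z)$ satisfies the target equation and $Y\in S^2$ by Doob's inequality applied to $\eta$ together with the Cauchy--Schwarz bound on $\int_{]0,t]}f\,du$.

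The core step is to show that $\Phi$ is a strict contraction with respect to the weighted norm
\[\|(Y,Z)\|_\beta^2:=E\Big[\int_{]0,T]}e^{\beta t}\bigl(Y_t^2+\|Z_t\|_{M_t}^2\bigr)dt\Big]\]
for a sufficiently large $\beta$. Taking two inputs $(y,z),(y',z')$ with images $(Y,Z),(Y',Z')$ and writing $\Delta Y=Y-Y'$, $\Delta Z=Z-Z'$, $\Delta y=y-y'$, $\Delta z=z-z'$, apply the It\^o product rule to $e^{\beta t}(\Delta Y_t)^2$. Using $\Delta Y_T=0$ and the identity $E[(\int Z^*dM)^2]=E[\int\|Z\|_{M}^2du]$ (which makes the stochastic-integral martingale term vanish in expectation after standard localization), one obtains
\[E\Big[\int_{]0,T]}e^{\beta u}\bigl(\beta(\Delta Y_u)^2+\|\Delta Z_u\|_{M_u}^2\bigr)du\Big]=2E\Big[\int_{]0,T]}e^{\beta u}\Delta Y_u\bigl(f(u,y_u,z_u)-f(u,y'_u,z'_u)\bigr)du\Big].\]
Bounding the right-hand side via the Lipschitz hypothesis and Young's inequality (with parameters tuned to absorb a fraction of the $\|\Delta Z\|_M^2$ term on the left) yields
\[\|(\Delta Y,\Delta Z)\|_\beta^2\le \frac{C(c)}{\beta}\|(\Delta y,\Delta z)\|_\beta^2\]
for some constant $C(c)$ depending only on the Lipschitz constant. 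Choosing $\beta$ large enough makes $\Phi$ a contraction, so Banach's fixed-point theorem delivers a unique fixed point, i.e.\ a unique solution in $S^2\times H^2_M$ modulo $\sim_M$.

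The main technical obstacle is not the contraction step itself (which is routine once set up) but the martingale representation: because $N$ may be infinite and $\|\cdot\|_{M_t}$ is only a stochastic seminorm that depends on the current state $X_{t-}$, one must take care that $Z$ is defined uniquely only up to $\sim_M$ and that the predictable quadratic variation identity applies pointwise in time. This is exactly the framework provided by \cite{Cohen2008}, and once invoked the construction goes through. A final $S^2$ bound on $Y$, required to close the iteration in the original space rather than just in the weighted $L^2$ norm, follows by applying Doob's maximal inequality to the martingale $\eta$ associated with the fixed point and using the Lipschitz structure to bound $\sup_t|\int_{]0,t]} f\,du|$ in $L^2$.
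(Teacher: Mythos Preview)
Your argument is correct and is precisely the standard Picard--contraction proof for Lipschitz BSDEs, applied in the Markov chain setting. The paper itself does not give a self-contained proof of this theorem: it simply cites \cite{Cohen2008} for the finite-state case and, for infinite $N$, invokes the martingale representation from \cite{Cohen2008} together with the general-space BSDE existence result of \cite{Cohen2010}. What you have written is essentially a reconstruction of the argument behind those citations, so the approaches coincide.

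Two minor remarks on presentation. First, your parenthetical about ``parameters tuned to absorb a fraction of the $\|\Delta Z\|_M^2$ term on the left'' is slightly off: in the Picard set-up the right-hand side involves only the \emph{input} increments $\Delta y,\Delta z$, not the output $\Delta Z$, so there is nothing to absorb---one simply chooses the Young-inequality parameters so that the $(\Delta Y)^2$ contribution from the right is dominated by $\beta(\Delta Y)^2$ on the left, and the factor $C(c)/\beta$ falls out directly. Second, the contraction is in an $L^2(dt)$-type norm on $Y$, not the $S^2$ norm; uniqueness in $S^2$ then follows because two c\`adl\`ag processes agreeing $dt\times d\bP$-a.e.\ are indistinguishable, which you should state explicitly. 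Neither point is a gap in the logic.
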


\begin{proof}
For the finite state case, this result is given in \cite{Cohen2008}. For the infinite state case, we use the martingale representation result established in \cite{Cohen2008},  which naturally extends to general spaces, coupled with the existence result for BSDEs in general spaces established in \cite{Cohen2010}.

We note that, unlike in \cite{Cohen2008,Cohen2010}, we have not assumed that $\F_0$ is trivial, that is, that $X_0$ is deterministic. Hence $Y_0$ is also a random variable. This poses no problems for the theory of BSDEs, however it will be useful for us to note that, on the set $\{X_0=x\}$, we will obtain a deterministic value $Y^x_0$, as $\F_0$ is nothing but the completion of $\sigma(X_0)$.
\end{proof}

A key result in the analysis of BSDEs is the comparison theorem. In the case of BSDEs with Markov Chain noise, and in general for BSDEs with jumps, a further condition is required to ensure that the result holds. In \cite{Cohen2009, Cohen2011a}  a general condition under which the comparison theorem holds is presented, and in \cite{Cohen2008b} a condition specific to Markov chain BSDEs was also given.

\begin{definition}\label{defn:balanced}
 For a driver $f$, we say that
\begin{itemize}
 \item $f$ is \emph{balanced} if 
\[\frac{f(\omega, t, y, z)- f(\omega,t,y,z')}{\|z-z'\|^2_{M_t}}(z-z')^* \Delta M_t >-1,\]
\item $f$ is \emph{weakly balanced} if 
\[\bigg(\frac{f(\omega, t, y, z)- f(\omega,t,y,z')}{\|z-z'\|^2_{M_t}} \wedge 0\bigg) (z-z')^* \Delta M_t >-1,\]
\item  $f$ is \emph{strictly balanced} if
\[\frac{f(\omega, t, y, z)- f(\omega,t,y,z')}{\|z-z'\|^2_{M_t}}(z-z')^* \Delta M_t >-1+\gamma\]
for some $\gamma>0$,
\end{itemize}
where all inequalities must hold for any $y\in \bR$, any $z,z'\in\bR^N$, and up to indistinguishability.
\end{definition}
Clearly strictly balanced $\Rightarrow$ balanced $\Rightarrow$ weakly balanced. Our main attention is on those cases where $f$ is (strictly) balanced, however, the comparison theorem holds assuming only that $f$ is weakly balanced.

\begin{theorem}[Finite-time comparison theorem]
Let $(Y, Z)$ and $(Y', Z')$ be the solutions to two BSDEs with drivers $f$ and $f'$. Suppose $f$ is weakly balanced and $f(\omega, t, y, z)\geq f'(\omega, t, y, z)$ for all $(y,z)$, $dt\times d\mathbb{P}$-a.s.  Then $Y_T\geq Y'_T$ a.s. implies $Y_t\geq Y'_t$ a.s. up to indistinguishability.
\end{theorem}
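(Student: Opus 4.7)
The plan is to reduce the comparison to a linear BSDE with non-negative data via a Girsanov change of measure. Setting $\bar Y := Y - Y'$ and $\bar Z := Z - Z'$, the pair $(\bar Y, \bar Z)$ solves a BSDE with terminal value $\bar Y_T \ge 0$ and driver $f(u, Y_u, Z_u) - f'(u, Y'_u, Z'_u)$. I would linearise this driver as
\[
f(u, Y_u, Z_u) - f'(u, Y'_u, Z'_u) = a_u \bar Y_u + (\delta_u \wedge 0)\,\|\bar Z_u\|^2_{M_u} + g_u,
\]
where $a_u$ is the $y$-difference quotient of $f$ (bounded by the Lipschitz constant $c$), $\delta_u$ is the scalar $z$-difference quotient $[f(u, Y'_u, Z_u) - f(u, Y'_u, Z'_u)]/\|\bar Z_u\|^2_{M_u}$ that appears in Definition \ref{defn:balanced}, and $g_u := [f(u, Y'_u, Z'_u) - f'(u, Y'_u, Z'_u)] + (\delta_u \vee 0)\|\bar Z_u\|^2_{M_u} \ge 0$ by the pointwise hypothesis $f \ge f'$. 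The crucial point is that only the \emph{negative} part of $\delta_u$ needs to be absorbed by a measure change; the positive part is safely moved into the non-negative driver~$g$.

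Next, I set $\gamma_u := (\delta_u \wedge 0)\, \bar Z_u$, so that $\gamma_u^* \psi_u \bar Z_u = (\delta_u \wedge 0)\|\bar Z_u\|^2_{M_u}$ and $\gamma_u^* \Delta M_u = (\delta_u \wedge 0)\,\bar Z_u^* \Delta M_u > -1$ by the weakly balanced hypothesis. Hence the Dol\'eans-Dade exponential $L := \mathcal{E}(\gamma \cdot M)$ is a strictly positive local martingale; the bound $\|\gamma\|_{M_u} \le c$ from the Lipschitz estimate, together with the uniform bound on the rate matrix $A$, promotes $L$ to a true martingale on $[0,T]$, defining an equivalent measure $\bQ$ under which $\tilde M := M - \int_0^\cdot \psi_u \gamma_u\, du$ is a local martingale. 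The BSDE for $\bar Y$ then rewrites under $\bQ$ as the linear equation
\[
\bar Y_t = \bar Y_T + \int_t^T (a_u \bar Y_u + g_u)\, du - \int_t^T \bar Z_u^*\, d\tilde M_u.
\]

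The integrating factor $R_t := \exp\!\left(\int_0^t a_u\, du\right)$ together with It\^o's formula then yields
\[
R_t \bar Y_t = E^{\bQ}\!\left[R_T \bar Y_T + \int_t^T R_s g_s\, ds \,\Big|\, \F_t\right],
\]
once I verify that the resulting stochastic integral is a true $\bQ$-martingale (using boundedness of $R$, $a$ and $g$, together with the $H^2_M$-integrability of $\bar Z$). Since $R$, $g$ and $\bar Y_T$ are all non-negative, $\bar Y_t \ge 0$ follows, and taking c\`adl\`ag versions gives the conclusion up to indistinguishability. The main obstacle is the Girsanov step: under only weak balance the naive choice $\gamma = \delta \bar Z$ can fail to produce a positive exponential (a negative jump of $\int \gamma^* dM$ could dip below $-1$ when $\delta > 0$ and $\bar Z^* \Delta M < 0$), and the trick is precisely to exploit the sign-restriction $\delta \wedge 0$ in the weakly balanced condition by shifting the harmless positive part of $\delta$ into $g$ rather than into the measure change.
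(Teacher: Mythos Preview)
Your proof is correct and is precisely the argument the paper has in mind: the paper defers to \cite{Cohen2011a}, whose proof is the same Girsanov linearisation you reconstruct, and the ``trivial modification of the last line of the proof of Lemma 1'' alluded to is exactly your $(\delta_u\wedge 0)$ trick---moving the positive part of the $z$-difference quotient into the non-negative residual $g_u$ so that only the negative part enters the measure change, which is what the weakly balanced condition controls. Your identification of why the naive choice $\gamma=\delta\bar Z$ fails and how the sign truncation repairs it is the key point.
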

\begin{proof}
 This is simply Theorem 3 of \cite{Cohen2011a}, where a trivial modification of the last line of the proof of Lemma 1 of \cite{Cohen2011a} is needed to exploit the definition of a weakly balanced driver.
\end{proof}

In fact, in the balanced case, the proof  of the comparison theorem is easy to deduce from the following lemma.
\begin{lemma}\label{lem:girsanovvalid}
 If $f$ is Lipschitz and balanced, then for any predictable processes $Z,Z'\in H^2_{M,\tloc}$, any process $Y\in S^2$, any $T<\infty$, the measure $\bQ^T$ defined by
\[\frac{d\bQ^T}{d\bP} = \E\Big(\int_{]0,\cdot\wedge T]} \frac{f(\omega, t, Y_{t-}, Z_t)- f(\omega,t,Y_{t-},Z'_t)}{\|Z_t-Z'_t\|^2_{M_t}} (Z_t-Z'_t)^* dM_t\Big),\]
 is a probability measure (where $\E$ denotes the Dol\'eans-Dade exponential), and
\[\tilde M_t=\int_{]0,t\wedge T]}\big(f(\omega, s, Y_{s-}, Z_s)- f(\omega,s,Y_{s-},Z'_s) \big)ds + \int_{]0,t\wedge T]}(Z_s-Z'_s)^*dM_s\]
is a $\bQ^T$-martingale.
\end{lemma}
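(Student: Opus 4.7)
The plan is to prove this lemma in three stages: bound the integrand uniformly in the $M$-seminorm, show that $\E(N)$ with $N := \int U^* dM$ is a strictly positive true $\bP$-martingale on $[0, T]$ (so that $\bQ^T$ is a bona fide probability), and then deduce the $\bQ^T$-martingale property of $\tilde M$ by Girsanov's theorem.

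Writing $U_t = \frac{f(\omega, t, Y_{t-}, Z_t) - f(\omega, t, Y_{t-}, Z'_t)}{\|Z_t - Z'_t\|^2_{M_t}}(Z_t - Z'_t)$ (and setting $U_t := 0$ on the exceptional set where $\|Z_t - Z'_t\|_{M_t} = 0$, on which the numerator vanishes as well), a direct computation using $\|z\|^2_{M_t} = z^* \psi_t z$ gives $\|U_t\|^2_{M_t} = \frac{(f(\omega, t, Y_{t-}, Z_t) - f(\omega, t, Y_{t-}, Z'_t))^2}{\|Z_t - Z'_t\|^2_{M_t}}$, so the Lipschitz hypothesis yields the uniform bound $\|U_t\|_{M_t} \leq c$. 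Thus $N$ is a square-integrable $\bP$-martingale on $[0, T]$ with $\langle N \rangle_T \leq c^2 T$. The jumps $\Delta N_t = U_t^* \Delta M_t$ are precisely the quantity whose sign is controlled by Definition \ref{defn:balanced}, so the balanced hypothesis forces $\Delta N_t > -1$ and $\E(N)$ is a strictly positive $\bP$-local martingale, hence a $\bP$-supermartingale.

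The crux, and the main obstacle, is to upgrade $\E(N)$ to a true $\bP$-martingale on $[0, T]$: the balanced hypothesis gives only $\Delta N_t > -1$, without any uniform separation from $-1$, so standard jump analogues of Novikov's criterion do not directly apply. The plan is to exploit the bounded-rate structure of the underlying chain. Let $\Lambda$ denote a uniform bound on the total outgoing rate from any state. Under the candidate measure $\bQ^T$, the tilted rate from $e_s$ to $e_i$ is $\tilde A_{is}(t) = A_{is}(t)(1 + U_t^i - U_t^s)$, which is nonnegative by balance, and Cauchy--Schwarz applied to the identity $\sum_{i \neq s} A_{is}(U_t^i - U_t^s)^2 = \|U_t\|^2_{M_t} \leq c^2$ gives the uniform bound $\sum_i \tilde A_{is}(t) \leq \Lambda + \sqrt{\Lambda}\,c$ on the total tilted outgoing rate; so under any such tilting the chain $X$ remains non-explosive. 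Standard Girsanov results for marked point processes with bounded tilted intensities then yield that $\E(N)$ is a true $\bP$-martingale with $E[\E(N)_T] = 1$. A more hands-on alternative is to localize by $\tau_n := \inf\{t : X \text{ has had } n \text{ jumps on } [0, t]\}$, observe that on $[0, \tau_n \wedge T]$ the exponential has at most $n$ jumps and can be written explicitly as a finite product against a bounded continuous drift factor, and pass to $n \to \infty$ using the uniform tilted-rate bound together with the $L^2$ control on $N$ to obtain uniform integrability of $\{\E(N)_{\tau_n \wedge T}\}_n$.

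With $\bQ^T$ established as a probability equivalent to $\bP$ on $\F_T$, Girsanov's theorem gives that $M_\cdot - \int_0^\cdot \psi_s U_s \, ds$ is a $\bQ^T$-local martingale. Integrating $(Z_s - Z'_s)^*$ against it, and using the key identity $(Z_s - Z'_s)^* \psi_s U_s = f(\omega, s, Y_{s-}, Z_s) - f(\omega, s, Y_{s-}, Z'_s)$ which is immediate from the definitions of $U$ and of $\|\cdot\|_{M_s}$, displays $\tilde M$ as a $\bQ^T$-local martingale; it is then promoted to a true $\bQ^T$-martingale on $[0, T]$ using the integrability of $(Z - Z')$ in $H^2_{M,\tloc}$ transferred across the equivalence of measures.
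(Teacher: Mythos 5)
Your argument follows the same skeleton as the paper's proof: the Lipschitz bound gives $\|U_t\|^2_{M_t}=(f(\omega,t,Y_{t-},Z_t)-f(\omega,t,Y_{t-},Z'_t))^2/\|Z_t-Z'_t\|^2_{M_t}\leq c^2$, hence $\langle N\rangle_T\leq c^2T$; the balanced condition gives $\Delta N_t>-1$ and strict positivity of $\E(N)$; and Girsanov together with the identity $(Z-Z')^*\psi_s U_s=f(\omega,s,Y_{s-},Z_s)-f(\omega,s,Y_{s-},Z'_s)$ handles $\tilde M$. Where you genuinely depart from the paper is at the step you correctly identify as the crux, namely that $\E(N)$ is a \emph{true} martingale on $[0,T]$ rather than a strict supermartingale. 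The paper dispatches this by citing the BMO lemma of \cite{Cohen2011a} together with $\Delta L_t>-1$; you instead exploit the point-process structure directly: the tilted rates $A_{is}(1+U^i_t-U^s_t)$ are nonnegative by balance and, via Cauchy--Schwarz against $\sum_{i\neq s}A_{is}(U^i_t-U^s_t)^2=\|U_t\|^2_{M_t}\leq c^2$, uniformly bounded by $\Lambda+\sqrt{\Lambda}\,c$, so the tilted chain is non-explosive and $E[\E(N)_T]=1$ by the standard change-of-measure theorem for point processes with bounded intensities (your localization at jump times is an equally valid route, since under the stopped tilted measures the jump counts are Poisson-dominated, so $\bQ_n(\tau_n\leq T)\to0$ uniformly). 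This is sound, more self-contained than the paper's citation, and produces exactly the quantitative control of the tilted rate matrix that the paper needs again in Lemma \ref{lem:PerturbErgodic}.

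The one step you must repair is the last sentence. Equivalence of $\bQ^T$ and $\bP$ on $\F_T$ does \emph{not} transfer integrability: a random variable in $L^2(\bP)$ need not even lie in $L^1(\bQ^T)$ under mere equivalence, so "transferred across the equivalence of measures" is not an argument. The paper closes the local-to-true gap for $\tilde M$ with H\"older's inequality,
\[E_{\bQ^T}\big[\tilde M^{2-\epsilon}_\tau\big]\leq E_{\bP}\Big[\Big(\tfrac{d\bQ^T}{d\bP}\Big)^{2/\epsilon}\Big]^{\epsilon/2}E_{\bP}\big[\tilde M^2_\tau\big]^{1-\epsilon/2},\]
uniformly over stopping times $\tau\leq T$, which requires a moment of the Radon--Nikodym density under $\bP$ of order greater than $1$ in addition to the $H^2_{M,\tloc}$ control of $Z-Z'$; uniform integrability of $\{\tilde M_\tau\}_{\tau\leq T}$ under $\bQ^T$ then follows. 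Some quantitative bound on the density of this kind must be stated and justified; as written, your final promotion of $\tilde M$ from a $\bQ^T$-local martingale to a $\bQ^T$-martingale does not go through.
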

\begin{proof}
First note that
\[L_t := \int_{]0,t\wedge T]} \frac{f(\omega, s, Y_{s-}, Z_s)- f(\omega,s,Y_{s-},Z'_s)}{\|Z_s-Z'_s\|^2_{M_t}} (Z_s-Z'_s)^* dM_s\]
is a local martingale with quadratic variation
\[\langle L\rangle_t = \int_{]0,t\wedge T]} \frac{|f(\omega, s, Y_{s-}, Z_s)- f(\omega,s,Y_{s-},Z'_s)|^2}{\|Z_s-Z'_s\|^4_{M_t}} \|Z_s-Z'_s\|^2_{M_t} dt \leq cT,\]
for $c$ a Lipschitz constant of $f$. Hence we know $\tilde M_t^T$ is a true (BMO-)martingale with all moments finite (see \cite[Lemma 2]{Cohen2011a}). Then, as $f$ is balanced, we see that $\Delta L_t>-1$, so $\bQ^T$ is a probability measure.

To show that the desired process is a local martingale is then an application of Girsanov's theorem. By H\"older's inequality we have, for any stopping time $\tau\leq T$, any $\epsilon<1$,
\[E_{\bQ^T}[\tilde M^{2-\epsilon}_\tau] \leq E_{\bP}[L_T^{2/\epsilon}]^{\epsilon/2}  E_{\bP}[\tilde M^2_\tau]^{1-\epsilon/2}\]
which is uniformly bounded, so $\tilde M$ is a true $\bQ^T$-martingale.
\end{proof}

The connection between these definitions of `balanced' drivers and the condition given  in \cite{Cohen2008b} is given by the following lemma, which is presented for completeness

\begin{lemma}\label{lem:balancedequiv}
 The following conditions are equivalent.
\begin{enumerate}[(i)]
 \item $f$ is weakly balanced.
 \item For any $z,z'\in \bR^N$, up to indistinguishability, on the set where
\[(e_i^*A_tX_{t-})[(z-z')^*(e_i-X_{t-})] \geq 0\]
for all $i$, we have
\[(f(\omega, t, y,z) - f(\omega, t, y, z'))\wedge 0 \geq -(z-z')^*A_tX_{t-}\qquad a.s.,\]
with equality only when $\|z-z'\|_{M_t}=0$.
\end{enumerate}
\end{lemma}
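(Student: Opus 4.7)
The plan is to reduce both conditions to concrete inequalities in the local rates and jump geometry. Conditional on $X_{t-} = e_j$, set $r_i := (A_t)_{ij} \geq 0$ for $i \neq j$ and $v_i := (z-z')^*(e_i - e_j) = (z-z')_i - (z-z')_j$. A direct computation from the definition $\psi_t^{e_j} = \diag(A_t e_j) - A_t \diag(e_j) - \diag(e_j) A_t^*$, together with the column-sum property $\sum_i (A_t)_{ij} = 0$, gives the identities $\|z-z'\|^2_{M_t} = \sum_{i\neq j} r_i v_i^2$ and $(z-z')^* A_t X_{t-} = \sum_{i\neq j} r_i v_i$. In this language the hypothesis of (ii) reads ``$v_i \geq 0$ for every $i$ with $r_i > 0$'', and weak balancedness amounts to: for each such $i$, $\Delta f \cdot v_i > -\|z-z'\|^2_{M_t}$ whenever $\Delta f := f(\omega,t,y,z)-f(\omega,t,y,z') < 0$.

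For (i) $\Rightarrow$ (ii): assume the hypothesis of (ii) holds. The case $\Delta f \geq 0$ is immediate, since the right-hand side $-\sum r_i v_i$ is non-positive, and equality forces every $r_i v_i = 0$ and hence $\|z-z'\|_{M_t} = 0$. For $\Delta f < 0$, choose an index $i^{*}$ attaining $\sup\{v_i : r_i > 0\}$ (in the infinite-state case, a maximising sequence and a limiting argument suffice). Maximality combined with non-negativity gives the key pointwise-versus-averaged inequality $v_{i^*} \sum_k r_k v_k \geq \sum_k r_k v_k^2 = \|z-z'\|^2_{M_t}$. Applying the weakly balanced condition at jump direction $i^*$ then yields $\Delta f \cdot v_{i^*} > -\|z-z'\|^2_{M_t} \geq -v_{i^*} \sum_k r_k v_k$, and dividing by $v_{i^*} > 0$ (strict positivity holds because $v_{i^*} = 0$ would entail $\|z-z'\|_{M_t} = 0$ and so, by Lipschitz continuity of $f$, $\Delta f = 0$, contradicting the case assumption) produces the strict inequality $\Delta f > -\sum_k r_k v_k$.

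For (ii) $\Rightarrow$ (i): this converse is the more delicate direction. The goal is to recover the directional bound $\Delta f \cdot v_{i_0} > -\|z-z'\|^2_{M_t}$ at each fixed jump direction $i_0$ with $r_{i_0} > 0$ from the averaged bound $\Delta f > -\sum_k r_k v_k$ supplied by (ii), which only applies on the restricted set where $v_i \geq 0$ for all $i$. The strategy I would attempt is to apply (ii) to carefully constructed surrogate pairs $(\tilde z, \tilde z')$ obtained from $(z, z')$ by zeroing out or perturbing the components along directions with $v_i < 0$ (so that the hypothesis of (ii) is activated), and then transferring the resulting bound back to the original pair via Lipschitz continuity. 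The main obstacle is that the pointwise directional bound and the averaged bound are genuinely different inequalities on the $(r_i, v_i)$ data when $v_{i_0}$ is large relative to the other $v_i$'s, so isolating a single direction from (ii) will likely require testing against a whole family of surrogate pairs — for instance, specialising to $\tilde z - \tilde z' = \alpha e_{i_0}$ and letting $\alpha$ vary — together with an envelope argument assembling the resulting family of inequalities.
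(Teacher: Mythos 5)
Your reformulation in terms of $r_i$ and $v_i$ is correct (both identities $\|z-z'\|^2_{M_t}=\sum_{i\neq j}r_iv_i^2$ and $(z-z')^*A_tX_{t-}=\sum_{i\neq j}r_iv_i$ check out), and your argument for $(i)\Rightarrow(ii)$ is essentially the paper's: the paper normalises the representative so that $(z-z')^*X_{t-}=0$, picks the index $j$ maximising $(z-z')_j$, and uses exactly your inequality $v_{i^*}\sum_k r_kv_k\geq\sum_k r_kv_k^2$. The only loose end there is your parenthetical ``a maximising sequence and a limiting argument suffice'' in the infinite-state case: passing to the limit along a maximising sequence degrades the strict inequality of the weakly balanced condition to a non-strict one, so you still owe the ``equality only when $\|z-z'\|_{M_t}=0$'' clause an argument when the supremum is not attained (it can be rescued, since equality forces every $v_k\in\{0,\sup_i v_i\}$ on $\{r_k>0\}$, whence the supremum is attained after all unless $\|z-z'\|_{M_t}=0$; but you should say this).

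The genuine gap is the direction $(ii)\Rightarrow(i)$: what you have written there is a declared strategy with an admitted obstacle, not a proof. You correctly observe that on the set where the sign hypothesis holds, (ii) only yields the averaged bound $\Delta f\geq-\sum_k r_kv_k$, whereas (i) demands $\Delta f\cdot v_{i_0}>-\sum_k r_kv_k^2$ for \emph{every} admissible direction $i_0$, and since $v^*\sum_k r_kv_k\geq\sum_k r_kv_k^2$ the averaged bound applied to the pair itself is too weak; chaining (ii) through intermediate points, or transferring bounds from surrogate pairs via the Lipschitz constant, does not repair this, because the error terms do not vanish and condition (ii) simply says nothing about pairs violating the sign hypothesis. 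The paper does not attempt anything of this kind. Its proof of $(ii)\Rightarrow(i)$ constructs, for the given pair $(z,z')$, an auxiliary vector $h$ with $h^*(z-z')=(\Delta f)\wedge 0$, $\bone^*h=0$ and $h^*e_i\geq 0$ for $e_i\neq X_{t-}$ (built from a direction with $r_iv_i<0$ when one exists, and from $A_tX_{t-}$ itself otherwise), and then reduces the weakly balanced inequality (\ref{eq:balancedsimple}) to the positive definiteness of the symmetric matrix $2\psi_t-hX_{t-}^*-X_{t-}h^*$, which is ``of the same form as $\psi_t$'' because $h$ is a rate-like vector. So the missing direction is not a routine completion of your sketch but an entirely different algebraic argument, and you should either supply that argument or a working version of your envelope construction before the proof can be considered complete.
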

\begin{proof}
Clearly the conditions are trivial on the set $f(\omega, t, y, z) - f(\omega, t, y, z')=0$, so we exclude this from consideration.

\emph{$(ii)\Rightarrow (i)$.} First note that the condition in Definition \ref{defn:balanced} is equivalent to
\begin{equation}\label{eq:balancedsimple}
 [(f(\omega, t, y, z) - f(\omega, t, y, z'))\wedge 0] (z-z')^*(X_t-X_{t-}) > -(z-z')^*\psi_M(z-z')\qquad a.s.
\end{equation}

For fixed values of $t,y,z, X_{t-}$, suppose there is $i$ with $(e_i^*A_tX_{t-})[(z-z')^*(e_i-X_{t-})] < 0$. Then let
\[h=\bigg(\frac{f(\omega, t, y, z) - f(\omega, t, y, z')}{(z-z')^*(e_i-X_{t-})}\vee 0\bigg) (e_i-X_{t-}).\]
If there is no such $i$, then note that $(z-z')^*A_tX_{t-}< 0$ (as it is the compensator of a nondecreasing process), and let
\[h=\bigg(\frac{f(\omega, t, y, z) - f(\omega, t, y, z')}{(z-z')^*A_tX_{t-}}\vee 0\bigg) A_tX_{t-}\]
In either case, $h^*(z-z')=(f(\omega, t, y, z) - f(\omega, t, y, z'))\wedge 0$, $\bone^*h=0$ and $h^*e_i\geq 0$ for all $e_i\neq X_{t-}$. In the latter case, note also that $\bone^*(A_tX_{t-}-h)=0$ and $e_i^*(A_tX_{t-}-h)> 0$ for all $e_i\neq X_{t-}$.

By considering all possible jumps, we then have that (\ref{eq:balancedsimple}) simplifies to
\[(z-z')^*[h(e_j-X_{t-})^* + \psi_t] (z-z')>0\]
for all $e_j$ with $e_j^*A_tX_{t-}>0$. This is equivalent to
\[(z-z')^*[h(e_j-X_{t-})^*+ (e_j-X_{t-}) h^* + 2\psi_t] (z-z')>0.\]
As $z-z'$ is at most defined up to the addition of a constant, for each $e_j$, we can suppose without loss of generality that $(z-z')^*e_j=0$. Hence it is enough to show that the symmetric matrix $[2\psi_t-hX_{t-}^*-X_{t-} h^*]$ is positive definite. However, as this matrix is of the same form as $\psi_t$, this is straightforward.

\emph{$(i)\Rightarrow (ii)$.} We know that (\ref{eq:balancedsimple}) holds. Suppose that $(e_i^*A_tX_{t-})[(z-z')^*(e_i-X_{t-})] \geq 0$ for all $i$. Without loss of generality, we select a representation of $z-z'$ such that $(z-z')^*X_{t-}=0$ and $(z-z')^*e_j=0$ for all $j$ with $e_j^*A_tX_{t-}=0$. Note that this then implies $z-z'$ is componentwise nonnegative. Then (\ref{eq:balancedsimple}) reduces to
\[ (f(\omega, t, y, z) - f(\omega, t, y, z'))\wedge 0 > -\frac{(z-z')^*\psi_M(z-z')}{ (z-z')^*(X_t)} = -\frac{\sum_i a_i(z-z')^2_i}{ (z-z')^*(X_t)}\]
with the convention $0/0=\infty$, where $a_i = e_i^* A_t X_{t-}$. This inequality must hold almost surely, so it must hold in the case $X_t=e_j$, for $j$ maximizing $(z-z')^*e_j=(z-z')_j$. As $(z-z')_i/(z-z')_j<1$, we have
\[ \begin{split}(f(\omega, t, y, z) - f(\omega, t, y, z'))\wedge 0 &> -\frac{\sum_i a_i(z-z')^2_i}{ (z-z')_j}
\\&>-\sum_i a_i(z-z')_i = -(z-z')^* A_tX_{t-}\end{split}\]
as desired.
\end{proof}

\begin{corollary}\label{cor:strictbalanceimp}
 If $f$ is balanced, then we also have that for any $z,z'\in \bR^N$, up to indistinguishability, if
\[(e_i^*A_tX_{t-})[(z-z')^*(e_i-X_{t-})] \geq 0\]
for all $i$, then
\[f(\omega, t, y,z) - f(\omega, t, y, z') \geq -(z-z')^*A_tX_{t-}\qquad a.s.,\]
with equality only when $\|z-z'\|_{M_t}=0$.
\end{corollary}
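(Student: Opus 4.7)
The plan is to rerun the $(i) \Rightarrow (ii)$ direction of Lemma \ref{lem:balancedequiv}, starting from the full balanced condition of Definition \ref{defn:balanced} so that the truncation $\wedge 0$ never enters. Since $\|z-z'\|_{M_t}^2 \geq 0$, multiplying through the defining inequality of the balanced property yields
$$(f(\omega, t, y, z) - f(\omega, t, y, z')) (z-z')^*(X_t - X_{t-}) > -(z-z')^* \psi_t (z-z')$$
almost surely at every jump time, with the degenerate case $\|z-z'\|_{M_t} = 0$ handled by $z \sim_M z'$, which makes the claim vacuous.

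Assuming the sign hypothesis $(e_i^* A_t X_{t-})[(z-z')^*(e_i - X_{t-})] \geq 0$ holds for every $i$, I would normalize the free additive constant in $z-z'$ so that $(z-z')^* X_{t-} = 0$ and $(z-z')^* e_j = 0$ for every $j$ with $e_j^* A_t X_{t-} = 0$. None of the relevant quantities depend on this choice, and the hypothesis then forces $(z-z')_i \geq 0$ for all $i$, supported on $\{i : a_i > 0\}$ where $a_i := e_i^* A_t X_{t-}$.

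Since the displayed inequality must hold almost surely on every event $\{X_t = e_j\}$ with $a_j > 0$, I would specialize to the (possibly $\omega$-dependent) index $j$ that maximizes $(z-z')_j$ on this support. On that event $(z-z')^*(X_t - X_{t-}) = (z-z')_j > 0$, so dividing through gives
$$f(\omega, t, y, z) - f(\omega, t, y, z') > -\frac{\sum_i a_i (z-z')_i^2}{(z-z')_j} \;\geq\; -\sum_i a_i (z-z')_i = -(z-z')^* A_t X_{t-},$$
where the second inequality uses $(z-z')_i \leq (z-z')_j$ throughout the support. The strict inequality is inherited directly from Definition \ref{defn:balanced}, and equality in the conclusion would force every $(z-z')_i$ on the support of $A_t X_{t-}$ to vanish, which together with the chosen normalization means $\|z-z'\|_{M_t} = 0$.

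The main observation is simply that, compared with Lemma \ref{lem:balancedequiv}, dropping the $\wedge 0$ from Definition \ref{defn:balanced} propagates cleanly through the chain of inequalities above, so no new idea is required. The only genuine care needed is handling the degenerate subcase in which $(z-z')_j = 0$ for every $j$ in the support of $A_t X_{t-}$, in which case our normalization makes $z - z' \sim_M 0$ and both sides of the claimed inequality vanish trivially.
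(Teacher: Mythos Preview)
Your proposal is correct and follows precisely the approach the paper takes: the paper's own proof is the single sentence that the $(i)\Rightarrow(ii)$ argument of Lemma~\ref{lem:balancedequiv} suffices once the term ``$\wedge 0$'' is removed throughout, and you have faithfully (and more explicitly) carried out exactly that computation, including the normalization of $z-z'$, the choice of the maximizing index $j$, and the handling of the degenerate case $\|z-z'\|_{M_t}=0$.
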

\begin{proof}
 The proof of {$(i)\Rightarrow (ii)$} in Lemma \ref{lem:balancedequiv} is sufficient to prove this result, where the term `$\wedge 0$' is removed throughout.
\end{proof}

The following version of Tanaka's formula will be useful. We adopt the convention that $\sign(x)=x/|x|$ for $x\neq 0$ and $\sign(0)=0$.
\begin{lemma}
 Let $Y$ be a c\`adl\`ag process of finite variation. Then the dynamics of $|Y|$ are given by
\[d|Y|_t = \sign(Y_{t-}) dY_t + \Delta L^Y_t\]
where $\Delta L^Y_t$ is the `local time' jump process, with
\[\Delta L_t^Y = |Y_{t}|(1-\sign(Y_tY_{t-}))\geq 0.\]
\end{lemma}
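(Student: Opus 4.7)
The plan is to apply the pathwise Lebesgue--Stieltjes chain rule to $F(x)=|x|$, which is absolutely continuous with $F'(x)=\sign(x)$ almost everywhere. Decomposing the c\`adl\`ag FV process pathwise as $Y=Y^c+Y^d$, where $Y^c$ is continuous and $Y^d_t=\sum_{0<s\le t}\Delta Y_s$, this yields
\[|Y_t|-|Y_0| = \int_{(0,t]} \sign(Y_{s-})\,dY^c_s + \sum_{0<s\le t}(|Y_s|-|Y_{s-}|).\]
Since $|\cdot|$ is not $C^1$, to obtain the continuous-part integral I would approximate by $F_\varepsilon(x)=\sqrt{x^2+\varepsilon}$, apply the classical chain rule for continuous integrators to $F_\varepsilon(Y^c)$, and let $\varepsilon\downarrow 0$; dominated convergence against the total variation measure $|dY^c|$ gives the limit, with the contribution from $\{Y^c_s=0\}$ vanishing because $F'_\varepsilon(0)\to 0$, which also legitimises the convention $\sign(0)=0$.

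Next I would reassemble the integral against $dY$ itself. Since $dY^c=dY-dY^d$ as Stieltjes measures and $Y^d$ is concentrated on the countable jump set,
\[\int_{(0,t]}\sign(Y_{s-})\,dY^c_s = \int_{(0,t]}\sign(Y_{s-})\,dY_s - \sum_{0<s\le t}\sign(Y_{s-})\,\Delta Y_s,\]
so substitution produces
\[|Y_t|-|Y_0| = \int_{(0,t]}\sign(Y_{s-})\,dY_s + \sum_{0<s\le t}\bigl[|Y_s|-|Y_{s-}|-\sign(Y_{s-})\,\Delta Y_s\bigr].\]
The remaining task is to check, at each jump time, that the bracketed quantity equals $|Y_s|(1-\sign(Y_sY_{s-}))$.

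This identification is purely algebraic, handled by a short case analysis on the signs of $(Y_{s-},Y_s)$. When both are strictly of the same sign, $\sign(Y_sY_{s-})=1$ and both sides vanish, reproducing $|Y_s|-|Y_{s-}|=\sign(Y_{s-})(Y_s-Y_{s-})$ on the left; when they are strictly of opposite sign, $\sign(Y_sY_{s-})=-1$ and both sides equal $2|Y_s|$; and when either $Y_{s-}$ or $Y_s$ vanishes, $\sign(Y_sY_{s-})=0$ and both sides collapse to $|Y_s|$. Nonnegativity of $\Delta L^Y_t$ then follows at once since $\sign(Y_tY_{t-})\le 1$.

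The main obstacle is really only the first step: rigorously justifying the chain rule for the non-smooth function $|\cdot|$ against the continuous FV integrator $Y^c$. Once the smooth approximation argument is in place, the remainder is bookkeeping and a finite case check on the jumps.
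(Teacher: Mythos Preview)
Your argument is correct and complete; it simply takes a different route from the paper. The paper argues more directly: it asserts that the identity $d|Y|=\sign(Y_{-})\,dY$ is ``clear'' wherever $Y_{s-}\neq 0$ and $Y$ does not jump across zero (since $|\cdot|$ is locally linear there), and then handles the two exceptional cases---$Y_{s-}=0$ and opposite-sign jumps---by direct inspection of the increments. Your approach is the more systematic one: establish the full Lebesgue--Stieltjes change of variables for $|\cdot|$ via the smooth approximation $F_\varepsilon(x)=\sqrt{x^2+\varepsilon}$, then identify the jump correction by the case analysis you give. Your version is arguably more rigorous, since the paper's ``clear except\ldots'' is a little informal if $Y$ oscillates around zero in a complicated way; the paper's version is shorter and stays at the level of increments.

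One small slip in your sketch: the approximation step should be applied to $F_\varepsilon(Y)$, not to $F_\varepsilon(Y^c)$. The change-of-variables formula for a $C^1$ function of a c\`adl\`ag FV process reads
\[
F_\varepsilon(Y_t)-F_\varepsilon(Y_0)=\int_{(0,t]} F'_\varepsilon(Y_{s-})\,dY^c_s+\sum_{0<s\le t}\bigl(F_\varepsilon(Y_s)-F_\varepsilon(Y_{s-})\bigr),
\]
with $Y_{s-}$ (not $Y^c_{s-}$) in the integrand. Letting $\varepsilon\downarrow 0$ then gives your first display by dominated convergence on both pieces: $|F'_\varepsilon|\le 1$ against the total-variation measure $|dY^c|$ for the integral, and the Lipschitz bound $|F_\varepsilon(a)-F_\varepsilon(b)|\le |a-b|$ against $\sum_{s\le t}|\Delta Y_s|<\infty$ for the sum. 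With that correction in place, the remainder of your argument---reassembling $dY^c=dY-dY^d$ and the three-case identification of the jump term---goes through exactly as you describe.
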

\begin{proof}
 The dynamics of $|Y|$ are clear except when $Y_{s-}=0$ or when $Y$ jumps over zero, that is, when $Y_{t-}$ and $Y_t$ are of opposite sign.

If $Y_{s-}=0$, then either $|Y_s|>0$, in which case $\Delta|Y_s|=\Delta L_t^Y$, or $Y_s=0$, in which case there is a version of the derivative of $Y$ which is zero at $s$ (as the measure induced by $Y$ does not charge the point $s$). In either case, the dynamics are satisfied.

If $Y_{t-}$ and $Y_t$ are of opposite sign, we have
\[\begin{split}\Delta |Y|_t& = \sign(Y_{t-}) \Delta Y_t + (\Delta |Y|_t - \sign(Y_{t-}) \Delta Y_t)\\
   &= \sign(Y_{t-}) \Delta Y_t + (|Y_t|-|Y_{t-}| - \sign(Y_{t-}) (Y_t - Y_{t-}))\\
 &= \sign(Y_{t-}) \Delta Y_t + (|Y_t|- \sign(Y_{t-})Y_t)\\
&= \sign(Y_{t-}) \Delta Y_t + |Y_t|(1-  \sign(Y_{t-}Y_t))\\
  \end{split}
\]
and so in both cases the dynamics hold.
\end{proof}

We now seek to show that there exists a bounded solution to the infinite-horizon BSDE with discounting. The approach used to obtain this result is based on Briand and Hu \cite{Briand1998} and Royer \cite{Royer2004}. However, in our setting, the following result greatly simplifies the analysis.

\begin{lemma}\label{lem:Zbound}
Let $Y$ satisfy the dynamics
\[dY_t =  \beta_t dt + Z_t^* dM_t\]
for some arbitrary predictable process $\beta$. If $|Y|\leq k$ for some $k$, then $|e_i^*Z|\leq 2k$ for all $i$, for at least one representative in $H^2_{M, \tloc}$. That is, $Z$ is bounded componentwise by twice a bound on $Y$.
\end{lemma}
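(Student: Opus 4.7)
The strategy is to extract a pointwise bound on the jump increments $Z^i_t-Z^j_t$ from the boundedness of $Y$, and then exploit the gauge freedom hidden in $\sim_M$ to recentre each component.

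First, the $\beta_t\,dt$ piece contributes no jumps, so $\Delta Y_t = Z_t^*(X_t-X_{t-})=Z^i_t-Z^j_t$ whenever $X$ jumps from $e_j$ to $e_i$. Boundedness $|Y|\leq k$ immediately gives $|\Delta Y_t|\leq 2k$ at every jump time. To promote this into a Lebesgue-a.e.\ bound on $Z^i-Z^j$ on the set $\{X_{t-}=e_j,\,(A_t)_{ij}>0\}$, I would take the dual predictable projection of the identically-zero process $\sum_{s\leq\,\cdot}(\Delta Y_s)^2\mathbf{1}_{\{|\Delta Y_s|>2k\}}$: this projection equals
\[
\int_0^\cdot \sum_{i\neq j} (Z^i_u-Z^j_u)^2\,\mathbf{1}_{\{|Z^i_u-Z^j_u|>2k\}}\,(A_u)_{ij}\,\mathbf{1}_{\{X_{u-}=e_j\}}\,du,
\]
whose nonnegative integrand must therefore vanish $dt\otimes d\bP$-a.e., for every pair $(i,j)$.

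Second, a direct expansion of $\psi^{e_j}_t=\diag(A_te_j)-A_t\diag(e_j)-\diag(e_j)A_t^*$ gives $\|z\|^2_{M_t}=\sum_{i\neq j}(A_t)_{ij}(z_i-z_j)^2$ on $\{X_{t-}=e_j\}$, so the equivalence class of $Z$ under $\sim_M$ consists exactly of those processes obtained by adding an arbitrary predictable constant to the components accessible from $X_{t-}$, together with arbitrary values on the inaccessible ones. Writing $j=j(t)$ for the index of $X_{t-}$, the representative
\[
\tilde Z^i_t := \begin{cases} Z^i_t-Z^{j(t)}_t & \text{if } i=j(t)\ \text{or}\ (A_t)_{i\,j(t)}>0,\\ 0 & \text{otherwise,}\end{cases}
\]
satisfies $\tilde Z\sim_M Z$, is predictable, lies in $H^2_{M,\tloc}$ because $\|\tilde Z\|_{M_t}=\|Z\|_{M_t}$, and by the previous step satisfies $|e_i^*\tilde Z_t|\leq 2k$ for every $i$.

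The main obstacle is the predictable-projection argument in the second step, which is what turns the pointwise-in-jump-times bound $|\Delta Y_t|\leq 2k$ into an almost-everywhere statement about the predictable process $Z$; everything else is bookkeeping with the explicit structure of $\psi_t$ and the identification $\Delta Y_t=Z_t^*(X_t-X_{t-})$.
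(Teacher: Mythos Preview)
Your proof is correct and follows essentially the same route as the paper: identify $\Delta Y_t = Z_t^*(X_t-X_{t-})$, bound this by $2k$, then use the $\sim_M$ gauge freedom to subtract off the $X_{t-}$-component and zero out inaccessible states. The only difference is that where the paper simply asserts that a bound holding at every jump time of a totally inaccessible martingale must hold $dt\times d\bP$-a.e.\ on the relevant sets, you spell this out via the dual predictable projection of $\sum_{s\leq\cdot}(\Delta Y_s)^2\mathbf{1}_{\{|\Delta Y_s|>2k\}}$; this is a welcome clarification of a step the paper leaves implicit.
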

\begin{proof}
 First note that as the jumps of $M$ are totally inaccessible, if an inequality holds for every jump time, then it must hold almost everywhere on every set where jumps occur with positive probability, that is, up to a representative in $H^2_{M, \tloc}$.  Now note that at any jump,
\[Y_t -Y_{t-} = \Delta Y_t = Z_t^*\Delta M_t = Z_t^* (X_t-X_{t-}).\]
Therefore,
\[|Z_t^* (X_t-X_{t-})|\leq 2k \quad a.s.\]
We now take a representative $Z$ such that $Z^*X_{t-}\equiv 0$, which can be done as $Z$ is only ever defined up to addition of a constant. Therefore $|Z_t^* X_t|\leq 2k$ for every $X_t$ which can be reached with positive probability in a single jump. Taking $Z_t=0$ for all $X_t$ which cannot be reached in a single jump (which can be done up to equality $\sim_M$), we see that $|e_i^*Z|\leq 2k$.
\end{proof}

\begin{theorem}\label{thm:discountedBSDEexist}
Let $\alpha>0$ and $f:\Omega\times\bR^+\times \bR^N\to \bR$ be
\begin{itemize}
 \item uniformly Lipschitz (in its third component) with respect to the $\|\cdot\|_{M_t}$-norm $d\bP\times dt$-a.e.
\item balanced, in the sense of Definition \ref{defn:balanced} (omitting the $y$ variable), and
\item such that $|f(\omega, t, 0)|$ is uniformly bounded by $C\in\bR$.
\end{itemize}
Then there exists an adapted solution $(Y, Z)$, with $Y$ c\`adl\`ag and $Z\in H^2_{M,\tloc}$, to the equation
\begin{equation}\label{eq:discBSDE}
Y_T=Y_t - \int_{]t,T]} (-\alpha Y_{u-} + f(\omega, u, Z_u)) du + \int_{]t,T]} Z_u^*dM_u, \qquad 0\leq t\leq T<\infty
\end{equation}
satisfying $|Y_t|\leq C/\alpha$, and this solution is unique among bounded adapted solutions.

Furthermore, if $(Y^T, Z^T)$ denotes the (unique) adapted solution to
\begin{equation}\label{eq:ThorizonBSDE}
 0=Y^T_t - \int_{]t, T]} (-\alpha Y_{u-}^T + f(\omega, u, Z_u^T)) du + \int_{]t, T]} (Z_u^T)^*dM_u
\end{equation}
then $\lim_{T\to\infty} Y^T_t = Y_t$ a.s., uniformly on compact sets in $t$.
\end{theorem}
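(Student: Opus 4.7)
The plan is to construct $(Y, Z)$ as the $T \to \infty$ limit of the finite-horizon solutions $(Y^T, Z^T)$ to (\ref{eq:ThorizonBSDE}), with Lemma \ref{lem:girsanovvalid} and the discount factor $\alpha > 0$ supplying both the a priori bound and an exponential rate of convergence. By Theorem \ref{thm:basicBSDEexist}, $(Y^T, Z^T) \in S^2 \times H^2_M$ is well-defined for each finite $T$, since $(y, z) \mapsto -\alpha y + f(\omega, u, z)$ is Lipschitz and $f(\omega, u, 0)$ is uniformly bounded, hence square-integrable on $[0, T]$.

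For the a priori bound, I would apply Lemma \ref{lem:girsanovvalid} with $Z' \equiv 0$ (the balanced hypothesis on $f$ transfers to the full driver since $-\alpha y$ does not depend on $z$) to produce a probability measure $\bQ^T$ under which the dynamics of $Y^T$ on $[0, T]$ read
\[
dY^T_t = (\alpha Y^T_{t-} - f(\omega, t, 0))\, dt + d\tilde{M}^T_t
\]
for some $\bQ^T$-martingale $\tilde M^T$. Applying It\^o to $e^{-\alpha t} Y^T_t$, using $Y^T_T = 0$, and taking $\bQ^T$-conditional expectation (bounded integrands, so the stochastic integral is a true $\bQ^T$-martingale) gives the representation
\[
Y^T_t = E^{\bQ^T}\!\Big[\int_t^T e^{-\alpha(u-t)} f(\omega, u, 0)\, du \,\Big|\, \F_t\Big],
\]
from which $|Y^T_t| \leq C/\alpha$ is immediate.

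The crucial estimate is the exponential Cauchy property. For $T_2 > T_1 \geq t$, the difference $\delta Y := Y^{T_2} - Y^{T_1}$ solves on $[t, T_1]$ a BSDE with terminal $\delta Y_{T_1} = Y^{T_2}_{T_1}$ and driver $-\alpha\,\delta Y + [f(\omega, u, Z^{T_2}_u) - f(\omega, u, Z^{T_1}_u)]$. A second application of Lemma \ref{lem:girsanovvalid}, now linearising the difference of $f$-values, yields a measure $\bQ$ under which $e^{-\alpha u}\,\delta Y_u$ is a true martingale on $[t, T_1]$, so
\[
|\delta Y_t| \leq e^{-\alpha(T_1 - t)}\,\|Y^{T_2}_{T_1}\|_\infty \leq \frac{C}{\alpha}\, e^{-\alpha(T_1 - t)}.
\]
This single bound delivers both the Cauchy property of $(Y^T_t)$ as $T \to \infty$ and the claimed uniform-on-compacts convergence to a limit $Y_t$, with $|Y_t| \leq C/\alpha$ and $Y$ c\`adl\`ag. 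To extract $Z$, fix $T_0 < \infty$; for $T \geq T_0$ the pair $(Y^T, Z^T)|_{[0, T_0]}$ solves the finite-horizon BSDE on $[0, T_0]$ with terminal $Y^T_{T_0}$, which converges in $L^\infty$ to $Y_{T_0}$, so standard $S^2 \times H^2_M$-stability on $[0, T_0]$ produces a limit $Z$ with $(Y, Z)$ satisfying (\ref{eq:discBSDE}) on $[0, T_0]$; consistency across $T_0$ places $Z \in H^2_{M,\tloc}$. Uniqueness among bounded solutions is exactly the same Girsanov-discount argument applied to the difference of two bounded solutions: $|Y_t - Y'_t| \leq e^{-\alpha(T-t)}\|Y_T - Y'_T\|_\infty$ for every $T$ forces $Y = Y'$, and then $Z \sim_M Z'$ follows from the BSDE.

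The principal technical obstacle is ensuring each Girsanov change of measure is genuinely a probability measure and that the relevant stochastic integrals are \emph{true} (not merely local) martingales under it, so that the conditional-expectation step is rigorous. Lemma \ref{lem:girsanovvalid} was engineered exactly for this on each finite horizon; Lemma \ref{lem:Zbound} then supplies the component-wise bound $|e_i^* Z^T| \leq 2C/\alpha$, which keeps the $e^{-\alpha u}(Z^T_u)^* dM_u$ integrand uniformly bounded and keeps the argument self-contained, while the discount $\alpha > 0$ is what lets us avoid ever constructing a global measure change on $[0, \infty)$ — it turns every finite-horizon computation into one with an $e^{-\alpha(T-t)}$ factor that closes the Cauchy argument.
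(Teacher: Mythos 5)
Your proposal is correct and follows essentially the same route as the paper: finite-horizon solutions from Theorem \ref{thm:basicBSDEexist}, the Girsanov linearisation of Lemma \ref{lem:girsanovvalid} combined with the discount factor to obtain the $C/\alpha$ bound and the $e^{-\alpha(T-t)}$ Cauchy estimate, and the same measure-change argument for uniqueness. The only cosmetic differences are that the paper runs the comparison through Tanaka's formula applied to $e^{-\alpha t}|\delta Y_t|$ rather than through the martingale property of $e^{-\alpha u}\delta Y_u$ itself, and extracts $Z$ from the convergence of the jumps of $Y^T$ (via Lemma \ref{lem:Zbound}) rather than from an $S^2\times H^2_M$ stability estimate; both choices are equivalent in substance.
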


\begin{proof}
 \textbf{Uniqueness.} We first show that, if a bounded solution exists, it is unique. Suppose we have two bounded solutions $(Y, Z)$ and $(Y', Z')$ to (\ref{eq:discBSDE}). For simplicity, write $\delta Y= Y-Y'$ and $\delta Z=Z-Z'$.

For $T>0$, let $\bQ^T_1$ be the measure with density
\begin{equation}\label{eq:Q1}
 \frac{d\bQ^T_1}{d\bP} = \E\Big(\int_{]0,\cdot \wedge T]} \frac{(f(\omega, u, Z_u) - f(\omega, u,  Z'_u))}{\|\delta Z_u\|^2_{M_u}}(\delta Z_u)^*dM_u\Big),
\end{equation}
where $\E$ denotes the Dol\'eans-Dade exponential. As $f$ is balanced, we can see that $\bQ^T_1$ is a probability measure. By Lemma \ref{lem:girsanovvalid}, it follows that
\[\begin{split}&-\int_{]0,t]} \sign(\delta Y_{u-}) (f(\omega, u, Z_u) - f(\omega, u, Z_u')) du \\& \qquad+ \int_{]0,t]} \sign(\delta Y_{u-}) \delta Z_u^* dM_u +\sum_{u\leq t}\Delta L^{(\delta Y_u)}_u \end{split}\]
is a $\bQ^T_1$ submartingale on $[0,T]$. (Note that the inclusion of $\sign(\delta Y_{u-})$ simply exchanges $Z$ and $Z'$, and does not affect the quadratic variation.)

By Tanaka's formula and It\=o's formula, we have, for all $s\leq t\leq T$,
\[E_{\bQ^T_1}[e^{-\alpha t}|\delta Y_t| - e^{-\alpha s}|\delta Y_s|\,|\F_s] \geq 0,\]
hence,
\[|\delta Y_s| \leq e^{-\alpha (t-s)}E_{\bQ^T_1}[|\delta Y_t|\,|\F_s] \leq e^{-\alpha (t-s)} C,\] for $C$ a bound on $|\delta Y_t|$. This bound is independent of $T$, and collapses as $t\to\infty$. Hence $|\delta Y_s|=0$, from which we see $Y_s=Y'_s$ a.s. for every $s$, and hence $Y=Y'$ up to indistinguishability as $Y$ and $Y'$ are c\`adl\`ag.

\textbf{Existence.} We now show that a bounded solution exists. Let $(Y^T, Z^T)$ denote the solution to the time $T$-horizon BSDE, as defined in (\ref{eq:ThorizonBSDE}). 

First, we show that $Y^T$ is bounded. Again, we invoke Lemma \ref{lem:girsanovvalid} and let $\bQ^T_2$ denote the probability measure with density
\[\frac{d\bQ^T_2}{d\bP} = \E\Big(\int_{]0,\cdot \wedge T]} \frac{f(\omega, u, Z_u^T) - f(\omega, u, 0)}{\|Z_u\|^2_{M_u}}(Z_u^T)^* dM_u\Big).
\]
Applying Tanaka's formula and It\=o's formula to $e^{-\alpha t}|Y^T_t|$, we see that
\begin{equation}\label{eq:boundsoln}
 |Y^T_t| \leq e^{\alpha t}E_{\bQ^T_2}\Big[\int_t^T e^{-\alpha u}|g(\omega, u, 0)|du\Big|\F_t\Big]\leq C/\alpha
\end{equation}
for $C$ a bound on $|f(\omega, u, 0)|$. Hence $Y^T$ is uniformly bounded.

Second, we show that $Y^T$ forms a Cauchy sequence in $T$. For any $T'>T$, we use Lemma \ref{lem:girsanovvalid} to take the probability measure
\[\frac{d\bQ^{T, T'}_3}{d\bP} = \E\Big(\int_{]0,\cdot \wedge T]} \frac{f(\omega, u, Z_u^T) - f(\omega, u, Z_u^{T'})}{\|Z_u^T-Z_u^{T'}\|^2_{M_u}}(Z_u^T-Z_u^{T'})^* dM_u\Big).
\]
 Again applying Tanaka's formula, It\=o's formula and the inequality (\ref{eq:boundsoln}), for $t<T$ we have
\[|Y^T_t- Y^{T'}_t| \leq e^{-\alpha(T-t)} E_{\bQ^{T, T'}_3}[|Y^{T'}_T|\,|\F_t] \leq Ce^{-\alpha(T-t)}/\alpha\qquad a.s.\]
Hence we see that $Y^T_t$ is a Cauchy sequence in $T$, and so a limit exists, and we denote it $Y_t$. The desired convergence uniformly on compacts is clear, and the bound established in (\ref{eq:boundsoln}) also holds for $Y_t$.

Finally, as $Y^T_t$ is uniformly bounded, so is $Z^T_t$, by Lemma \ref{lem:Zbound}. As $Y^T_t$ converges a.s.~uniformly on compacts in $t$, its jumps converge a.s.~uniformly on compacts, however this implies that the $Z^T_t$ also converge a.s.~uniformly on compacts, at least up to equivalence in $\|\cdot\|_{M_t}$. Taking $Z$ as the limit of $Z^T$, we have our desired solution $(Y,Z)$.
\end{proof}

To finish this section, we finally state a result on the existence of `Markovian' solutions to these BSDEs, that is, when the BSDE solution $(Y,Z)$ can be written as a function of the underlying state process $X$.

\begin{theorem}\label{thm:markovianDisc}
Suppose our Markov chain $X$ is irreducible at every point of time. Consider either
\begin{itemize}
 \item the situation of Theorem \ref{thm:basicBSDEexist}, when the terminal value of the BSDE is of the form $Q=\phi(X_T)$ for some deterministic function $\phi:\bR^N\to\bR$, or
\item the situation of Theorem \ref{thm:discountedBSDEexist},
\end{itemize}
and suppose that the driver $f$ factors through $X_{t-}$, as a function of $\omega$, that is, $f$ can be written in the form $f(\omega, t, \cdots) = \tilde f(X_{t-}(\omega), t, \cdots)$ for some function $\tilde f$. Then there exists a function $v:[0,\infty[\,\times \X\to\bR$ such that $Y_t = v(t, X_t)$ and $e_i^*Z_t = v(t, e_i)$, and this function satisfies the coupled ODE system
\[\frac{d v(t, e_i)}{dt} = -\tilde f(e_i, t, v(t, e_i), v(t, \cdot)) - e_i^* A_t^* v(t, \cdot)\]
in the setting of Theorem \ref{thm:basicBSDEexist}, respectively 
\[\frac{d v(t, e_i)}{dt} = -\alpha v(t, e_i) -\tilde f(e_i, t, v(t, \cdot)) - e_i^* A_t^* v(t, \cdot),\]
in the setting of Theorem \ref{thm:discountedBSDEexist}, where $v(t, \cdot)$ refers to the vector in $\bR^N$ with entries $\{v(t, e_j)\}_{j=1}^N$.
\end{theorem}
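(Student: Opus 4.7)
The plan is to exploit the Markov property of $X$, combined with the uniqueness clauses of Theorems \ref{thm:basicBSDEexist} and \ref{thm:discountedBSDEexist}, to show that the BSDE solution $Y$ factors through the current state of the chain; the representation of $Z$ and the ODE then follow by matching the drift and martingale parts of an It\^o-type decomposition of $v(t,X_t)$.

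\emph{Markovianity of $Y$.} Fix $s$. Under the Markov property, the conditional law of $(X_u)_{u\geq s}$ given $\F_s$ depends only on $X_s$, and hence the same holds for both $\phi(X_T)$ and the collection $\{\tilde f(X_{u-},u,\cdot)\}_{u\geq s}$. Consequently, working under the regular conditional probability $\bP(\,\cdot\mid X_s=e_i)$, the BSDE on $[s,T]$ is the same, pathwise in distribution, for $\bP$-almost every realisation on $\{X_s=e_i\}$. Uniqueness of the BSDE solution then forces $Y_s$ to be $\sigma(X_s)$-measurable, so there is a deterministic function $v(s,\cdot):\X\to\bR$ with $Y_s=v(s,X_s)$; irreducibility makes $v(s,e_i)$ meaningful at every $i$. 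For the discounted infinite-horizon case the same argument applies to each approximating solution $Y^T$ of Theorem \ref{thm:discountedBSDEexist}, and then passes to the limit because $Y^T_s\to Y_s$ uniformly on compacts.

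\emph{It\^o decomposition.} Writing $Y_t=X_t^* v(t,\cdot)$, and using that $v(t,\cdot)$ is a deterministic, continuously parametrised vector, the product rule for the pure-jump semimartingale $X$ gives, with no cross-bracket term,
\[
dY_t \;=\; X_{t-}^*\,\partial_t v(t,\cdot)\,dt + v(t,\cdot)^* dX_t \;=\; \big[X_{t-}^*\partial_t v(t,\cdot) + v(t,\cdot)^* A_t X_{t-}\big]\,dt + v(t,\cdot)^* dM_t.
\]
Comparing martingale parts with those of the BSDE and invoking uniqueness of the martingale representation up to $\sim_M$, one selects the representative $e_i^* Z_t=v(t,e_i)$; equating drifts on the event $\{X_{t-}=e_i\}$ then produces the coupled ODE.

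\emph{Differentiability and main obstacle.} The It\^o step presupposes that $v(\cdot,e_i)$ is differentiable, which is not known a priori. To bypass this, I would use a direct infinitesimal computation: for small $h>0$, Dynkin's formula yields
\[
E\big[v(s+h,X_{s+h})\mid X_s=e_i\big] \;=\; v(s+h,e_i) + h\,e_i^* A_s^* v(s+h,\cdot) + o(h),
\]
while integrating the BSDE on $[s,s+h]$ and conditioning on $X_s=e_i$ gives
\[
v(s,e_i) - E\big[v(s+h,X_{s+h})\mid X_s=e_i\big] \;=\; h\big(\tilde f(e_i,s,v(s,\cdot)) - \alpha v(s,e_i)\big) + o(h)
\]
(with $\alpha=0$ under Theorem \ref{thm:basicBSDEexist}). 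Dividing by $h$ and letting $h\downarrow 0$ simultaneously proves differentiability of $v(\cdot,e_i)$ and recovers the ODE. The principal obstacle is the discounted infinite-horizon case, where $v$ arises only as the pointwise limit of the Markovian functions $v^T$ attached to the $T$-horizon approximants; preserving the jumpwise identification $e_i^* Z_t=v(t,e_i)$ through this limit requires the uniform-on-compacts convergence of Theorem \ref{thm:discountedBSDEexist} together with the jump-based bound of Lemma \ref{lem:Zbound}, which together force $Z^T\to Z$ in the $\sim_M$-sense.
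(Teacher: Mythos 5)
Your argument is correct in outline, and it is worth knowing that the paper does not actually prove this theorem from first principles: it cites \cite{Cohen2011b} for the finite-state, finite-horizon case, asserts the extension to infinite $N$, and then treats the discounted infinite-horizon case exactly as you do, by noting that the approximants $Y^T$ of Theorem \ref{thm:discountedBSDEexist} fall under the first case and passing to the limit via the uniform convergence and (implicitly) Lemma \ref{lem:Zbound}. What you have written is essentially a reconstruction of the delegated argument: the Markov property plus uniqueness (together with the observation, already made in the proof of Theorem \ref{thm:basicBSDEexist}, that the completed $\sigma(X_s)$ restricted to $\{X_s=e_i\}$ behaves like a trivial initial $\sigma$-algebra) gives $Y_s=v(s,X_s)$; the jump identity $\Delta Y_t = Z_t^*(X_t-X_{t-}) = v(t,\cdot)^*(X_t-X_{t-})$ identifies $Z\sim_M v(t,\cdot)$; and your Dynkin-type difference quotient is the right way to obtain differentiability of $v(\cdot,e_i)$ together with the ODE, rather than assuming it for an It\^o expansion — and it applies verbatim to the limiting solution in the discounted case, which disposes of the obstacle you flag. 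The residual technical points (measurability of $v$, trivial since $\X$ is countable; continuity of $v(\cdot,e_i)$ needed for the $o(h)$ estimates, inherited from the finite-horizon case and preserved under uniform convergence on compacts) are genuinely minor.

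One thing you must not leave implicit: carry your two displayed identities to their conclusion. Subtracting them gives
\[
\frac{d v(t,e_i)}{dt} \;=\; \alpha\, v(t,e_i) \;-\; \tilde f(e_i,t,v(t,\cdot)) \;-\; e_i^* A_t^* v(t,\cdot),
\]
with $+\alpha v(t,e_i)$, not the $-\alpha v(t,e_i)$ printed in the statement. The printed sign appears to be a typo rather than an error in your computation: with a single state, $A=0$ and $\tilde f\equiv C$, the unique bounded solution of (\ref{eq:discBSDE}) is $v=C/\alpha$, consistent with the bound $|Y_t|\leq C/\alpha$ of Theorem \ref{thm:discountedBSDEexist} and with the stationary equation $0=\alpha v - C$, but not with $0=-\alpha v - C$ (the same sign slip propagates into Corollary \ref{cor:markoviansolutions}). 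As written, your proof ends by asserting that the computation ``recovers the ODE''; it recovers an ODE that contradicts the one in the statement, so you should write out the equation you actually obtain and note the discrepancy explicitly.
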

\begin{proof}
 For $N$ finite, in the first case, this result is directly from \cite{Cohen2011b}. It is easy to verify that the same arguments as in \cite{Cohen2011b} will hold for $N$ infinite. For the situation of Theorem \ref{thm:discountedBSDEexist}, note that the finite-time approximations $Y^T_t$ constructed in the proof of that theorem are all examples of the first case, with $\phi\equiv 0$. Hence we can find functions $v^T(\cdot, \cdot)$ satisfying the desired statements, and the proven convergence allows us to take $T\to\infty$. The only difficulty with this is the stated dynamics on $v(t, e_i)$, which can easily be verified directly, as in \cite{Cohen2011b}.
\end{proof}

\begin{corollary}\label{cor:markoviansolutions}
 Suppose that $X$ is time-homogeneous (so $A=A_t$ is constant), and we are in the situation of Theorem \ref{thm:discountedBSDEexist}, where our driver factors through $X_{t-}$, and does not depend on time. Then $v$ does not depend on time, and we have the equation in $\bR^N$
\[\alpha v = -\tilde f(\cdot, v) -  A^*v\]
where $v=v(t, \cdot)$ is a vector in $\bR^N$, and $\tilde f(\cdot, v)$ refers to the vector with entries $\{\tilde f(e_i, v)\}_{i=1}^N$. If $\tilde f$ is balanced, using the natural modification of Definition \ref{defn:balanced} (and correspondingly Lemma \ref{lem:balancedequiv}) this equation admits a unique solution.
\end{corollary}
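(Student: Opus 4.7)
The plan is to assemble the corollary from machinery already in place, specifically by combining Theorem \ref{thm:markovianDisc} with the uniqueness clause of Theorem \ref{thm:discountedBSDEexist}. By Theorem \ref{thm:markovianDisc} applied in the setting of Theorem \ref{thm:discountedBSDEexist}, there is a function $v(t, \cdot) : [0, \infty) \times \X \to \bR$ representing the unique bounded solution of the infinite-horizon discounted BSDE; since $A$ and $\tilde f$ are both time-independent by hypothesis, the ODE system of Theorem \ref{thm:markovianDisc} is autonomous. I would first argue that $v$ itself is $t$-independent. The finite-horizon approximations $v^T(t,x)$ appearing in the proof of Theorem \ref{thm:discountedBSDEexist} satisfy $v^T(t, x) = v^{T-t}(0, x)$: this is a direct time-shift argument using the Markov property together with the time-homogeneity of $A$ and $\tilde f$, since the BSDE on $[t, T]$ with zero terminal value is, after translation by $t$, identical in law to the BSDE on $[0, T-t]$ started from the same state. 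Passing to the limit $T \to \infty$ using the pointwise convergence $v^T(t, x) \to v(t, x)$ established in Theorem \ref{thm:discountedBSDEexist} yields $v(t, x) = \lim_{s \to \infty} v^s(0, x)$, which manifestly does not depend on $t$. Substituting $\dot v \equiv 0$ into the autonomous ODE produces the claimed algebraic equation.

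For uniqueness, I would take two bounded solutions $v, v' \in \bR^N$ of $\alpha w = -\tilde f(\cdot, w) - A^* w$ and construct candidate BSDE solutions from them. Set $(Y_t, Z_t) := (v^* X_t, v)$ and $(Y'_t, Z'_t) := ((v')^* X_t, v')$, treating $Z$ and $Z'$ as constant predictable processes in $\bR^N$. A direct computation using the Doob--Meyer decomposition \eqref{ref:DMrepMarkovChain} together with the algebraic equation verifies that each pair is a bounded adapted solution of the discounted BSDE \eqref{eq:discBSDE} with driver $f(\omega, t, z) := \tilde f(X_{t-}(\omega), z)$. This lifted driver is Lipschitz in $z$, bounded at $z=0$, and inherits balancedness from $\tilde f$ via the natural transfer described in the statement. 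The uniqueness clause of Theorem \ref{thm:discountedBSDEexist} then gives $Y \equiv Y'$ up to indistinguishability, and irreducibility of $X$ (inherited from the invocation of Theorem \ref{thm:markovianDisc}) ensures every state is reached with positive probability, forcing $v_i = v'_i$ componentwise.

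The main obstacle is notational rather than conceptual: one must choose the representatives of $Z$ and $Z'$ under the equivalence $\sim_M$ consistently with the convention $e_i^* Z_t = v(t, e_i)$ of Theorem \ref{thm:markovianDisc} (which, in the stationary case, reduces to $Z_t \equiv v$), and verify that the natural modification of Definition \ref{defn:balanced} applied to $\tilde f : \X \times \bR^N \to \bR$ genuinely implies balancedness of the lifted driver $f(\omega, t, z) = \tilde f(X_{t-}(\omega), z)$ in the sense required by Lemma \ref{lem:girsanovvalid} and Theorem \ref{thm:discountedBSDEexist}. Both points are immediate, because the balanced condition depends only on the pair $(X_{t-}, z)$ and on the jump $\Delta M_t = X_t - X_{t-}$, each of which is a measurable functional of $X$ alone. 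Beyond this bookkeeping, no new estimate is needed: the corollary is essentially a stationary-regime readout of the earlier theorems.
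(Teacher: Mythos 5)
Your strategy is the paper's: the paper's own proof is a three-line sketch saying that time-homogeneity makes the unique bounded solution a function of $X_t$ alone, that the displayed equation is ``the natural rewriting'' of the ODE in Theorem \ref{thm:markovianDisc}, and that existence and uniqueness come from Theorem \ref{thm:discountedBSDEexist}. Your time-shift argument $v^T(t,x)=v^{T-t}(0,x)$ and your lifting of algebraic solutions back to bounded BSDE solutions are exactly the details the paper leaves implicit, and both are sound.

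There is, however, one point where you should not defer: the ``direct computation using the Doob--Meyer decomposition together with the algebraic equation'' that you promise but do not carry out. If you actually perform it, it does not verify the displayed equation. With $Y_t=v^*X_t$ and $Z_t\equiv v$, the decomposition (\ref{ref:DMrepMarkovChain}) gives $dY_t=(A^*v)^*X_{t-}\,dt+v^*dM_t$, while (\ref{eq:discBSDE}) forces the drift $\alpha Y_{t-}-\tilde f(X_{t-},Z_t)$; matching on $\{X_{t-}=e_i\}$ yields $\alpha v=\tilde f(\cdot,v)+A^*v$, i.e.\ the resolvent equation $(\alpha-A^*)v=\tilde f(\cdot,v)$, not $\alpha v=-\tilde f(\cdot,v)-A^*v$ as displayed. (A sanity check: for $\tilde f\equiv C$ constant the bounded solution is $v=(C/\alpha)\bone$ by Theorem \ref{thm:discountedBSDEexist}, which satisfies the former and not the latter; the later remark in the paper writing the ergodic equation as $0=f(\cdot,v)+A^*v-\lambda\bone$ is consistent with the former as well.) The sign slip originates in the second ODE of Theorem \ref{thm:markovianDisc}, which should carry $+\alpha v(t,e_i)$ rather than $-\alpha v(t,e_i)$, so your step ``substitute $\dot v\equiv 0$ into the autonomous ODE'' inherits it. Your uniqueness argument is otherwise fine once the equation is corrected — the lift $(v^*X_t,v)$ then genuinely solves (\ref{eq:discBSDE}), the lifted driver inherits balancedness as you say, and the uniqueness clause of Theorem \ref{thm:discountedBSDEexist} (noting that, for $N=\infty$, uniqueness is only among \emph{bounded} vectors $v$) forces $v=v'$ componentwise.
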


\begin{proof}
 Under the conditions of the corollary, as $X$ is a Markov chain, our infinite-horizon BSDE (\ref{eq:discBSDE}) does not vary in $t$ given the state $X_t$. It is then clear that the unique solution is a function purely of $X_t$. The dynamics are then the natural rewriting of those in Theorem \ref{thm:markovianDisc}, and the existence and uniqueness of the solution is the result of Theorem \ref{thm:discountedBSDEexist}.
\end{proof}

\begin{remark} 
 It is worth noting that this is a nontrivial algebraic statement, due to the nonlinearity of the function $\tilde f$. If we knew that $\tilde f(v)+A^*v$ was a strictly monotone function, in the sense that
\[
\langle \tilde f(v)-\tilde f(v')+A^*(v-v'), v-v'\rangle \leq -\epsilon\|v-v'\|^2
\]
 for some $\epsilon>0$ and all $v,v'$, then it would be possible to construct an existence result using standard techniques (see, for example \cite[p.565]{Zeidler1989}). Initially, this would appear to be true, at least for chains with finitely many states, using the fact that the nonlinear equation generates measures under which $X$ is a Markov chain (see the proof of Lemma \ref{lem:PerturbErgodic}), and the Perron--Frobenius theorem or Krein--Rutman theorem to bound eigenvalues below zero. However, such an argument depends on the diagonalizability of the derivative of $\tilde f(v)+A^*v$ which is a non-trivial assumption. For example, one might consider the situation on four states with $\tilde f=0$, and
\begin{equation}\label{eq:pathA}
 A^*= \left[\begin{array}{cccc} -3 &1&2&0\\1&-3&2&0\\ 0&2&-3 &1\\ 0&2&1&-3 \\ \end{array}\right]
\end{equation}
which is a very well behaved Markov chain generator, however is defective as a matrix and so it is easy to find vectors $v$ with $v'Av>0$. Hence this argument fails in general.
\end{remark}

\section{Uniformly ergodic chains}\label{sec:ergodic}
We aim to construct solutions to the ergodic BSDE (\ref{eq:EBSDE}), using the solutions to the discounted BSDEs (\ref{eq:discBSDE}). As our aim is to contemplate the long-run behaviour, we shall now make the following two assumptions, for the remainder of the paper. First, the Markov chain $X$ is time homogeneous (so $A_t=A$ for all $t$) and irreducible, and second, the BSDE drivers we consider will all factor through $X$, so our BSDE solutions will be functions of the current state of the Markov chain, and, in particular, Corollary \ref{cor:markoviansolutions} holds.

The key to using discounted BSDEs to approximate the ergodic BSDE is to study the ergodic behaviour of the Markov chain itself. We shall therefore make moderately restrictive assumptions on the Markov chain, sufficient to give explicit bounds on its convergence to its ergodic distribution, and then show that these bounds carry over to the solution of the relevant BSDEs. 

\begin{definition}\label{def:unifergodic}
Let $\cal M$ denote the set of probability measures on $\X$, with the topology inherited from considering them as a convex subset of $\ell_1(\X)$ (the total variation topology, with norm $\|f\|_{TV} = \sum_x |f(x)|$). We write $P_t\mu$ for the law of $X_t$ given $X_0\sim \mu$.

 We say the Markov chain $X$ is \emph{uniformly ergodic} if there exists a measure $\pi$ on $\X$, and constants $R,\rho>0$, such that
\[\sup_{\mu\in{\cal M}} \|P_t\mu-\pi\|_{TV}  \leq Re^{-\rho t} \qquad\text{for all }t.\]
In this case $\pi$ is the unique invariant measure for the chain.
\end{definition}

We now prove a few standard properties of such chains. The following lemma is simply a variant of \cite[Theorem 16.2.2(iv)]{Meyn2009}. For simplicity, we will write $E_x[\cdot]$ for $E[\cdot|X_0=x]$.

\begin{lemma}\label{ergodlem1}
 Let $X$ be a uniformly ergodic Markov chain, and let $x_C$ be an arbitrary state. Let $\tau_{C}$ be the first hitting time of $x_C$. Then for some $\beta>0$ (and hence for all $\beta$ sufficiently small),
\[G_{x_C}(\beta):=\sup_{x\in\X}E_x[e^{\beta \tau_C}]<\infty.\]
\end{lemma}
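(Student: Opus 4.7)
The idea is to use the total-variation bound in Definition \ref{def:unifergodic} to get a geometric tail bound on $\tau_C$ uniformly in the starting state, and then integrate to control the moment generating function.

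First I would observe that $p_C := \pi(\{x_C\}) > 0$. Since the chain is irreducible and $\pi$ is invariant, for any state $y$ there is some $x$ with $\pi(\{x\}) > 0$ and $t > 0$ with $P_t(x,y) > 0$, and then $\pi(\{y\}) = \sum_{x'} \pi(\{x'\}) P_t(x', y) > 0$. Next, by uniform ergodicity applied to $\mu = \delta_x$, I can pick $T_0$ large enough that
\[
| P_t\delta_x(\{x_C\}) - p_C | \leq \| P_t\delta_x - \pi\|_{TV} \leq R e^{-\rho t} \leq p_C/2
\]
for all $t \geq T_0$ and all $x \in \X$. In particular, $P_x(X_{T_0} = x_C) \geq p_C/2$ uniformly in $x$, and since $\{X_{T_0} = x_C\} \subset \{\tau_C \leq T_0\}$,
\[
q := \sup_{x \in \X} P_x(\tau_C > T_0) \leq 1 - p_C/2 < 1.
\]

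Second, I would iterate using the (strong) Markov property at time $nT_0$: conditional on $\tau_C > nT_0$, the chain restarts at $X_{nT_0}$, so
\[
P_x(\tau_C > (n+1)T_0) = E_x\bigl[\mathbf{1}_{\tau_C > nT_0} P_{X_{nT_0}}(\tau_C > T_0)\bigr] \leq q \cdot P_x(\tau_C > nT_0),
\]
giving $\sup_x P_x(\tau_C > nT_0) \leq q^n$ by induction.

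Third, I would compute
\[
E_x[e^{\beta \tau_C}] = 1 + \beta \int_0^\infty e^{\beta t} P_x(\tau_C > t)\, dt,
\]
split the integral over the intervals $[nT_0, (n+1)T_0)$, bound $P_x(\tau_C > t)$ by $q^n$ on the $n$-th piece, and sum. The resulting geometric series converges, uniformly in $x$, as soon as $q e^{\beta T_0} < 1$, i.e.\ $\beta < T_0^{-1} \log(1/q)$.

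The main (minor) obstacle is showing $p_C > 0$ in the countable-state setting; everything else is a clean consequence of the uniform-over-$\mu$ nature of the total-variation bound in Definition \ref{def:unifergodic}, which delivers the uniformity in $x$ that is needed both for the one-step hitting probability and for the final supremum.
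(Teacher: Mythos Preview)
Your argument is correct: the uniform total-variation bound gives a uniform lower bound on the one-step hitting probability over a window of length $T_0$, the Markov property at the deterministic times $nT_0$ gives the geometric tail $\sup_x P_x(\tau_C>nT_0)\le q^n$, and the layer-cake integral converges for $\beta<T_0^{-1}\log(1/q)$. The verification that $\pi(\{x_C\})>0$ is also fine, using irreducibility (which the paper assumes at the start of Section~\ref{sec:ergodic}) together with the fact that $\pi$ is a probability measure on a countable set and hence charges some point.

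As for comparison with the paper: the paper does not actually give a proof of this lemma but simply cites \cite[Theorem 16.2.2(iv)]{Meyn2009}. Your argument is precisely the standard route to that result in the present countable-state, uniformly ergodic setting (the singleton $\{x_C\}$ is automatically petite, and your geometric-trials argument is the elementary substitute for the general regeneration machinery). So you have supplied a self-contained proof where the paper defers to a reference; there is no methodological divergence to discuss beyond that.
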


The following general continuity lemma allows us to take a bound, such as that established in Lemma \ref{ergodlem1}, and force it to converge for small $\beta$.
\begin{lemma}\label{lem:ergodboundconverge}
 Let $T$ be a random variable, and consider $G(\beta) = \sup_\nu E_\nu[e^{\beta T}]$, where $\nu$ is a parameterization of measures under which the expectation is taken (in our case, the family of initial states of the Markov chain). If there exists $\beta^*>0$ such that $G(\beta^*)\leq K$, for some $K<\infty$, then for any $\epsilon>0$,
\[G(\beta)\leq 1+\epsilon\quad \text{for all} \quad \beta\in\Big[0, \frac{\beta^*}{2}\Big(\frac{\epsilon}{K}\wedge 1\Big)\Big].\]
Note in particular that the bound does not depend on the underlying  measures, given $K$ and $\beta^*$.
\end{lemma}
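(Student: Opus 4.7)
The plan is to interpolate $G$ between its values at the endpoints $\beta=0$ (where $G(0)=1$) and $\beta=\beta^{*}$ (where $G(\beta^{*})\leq K$), using only convexity of the exponential. Since $T$ is nonnegative (in the intended application $T$ is a hitting time), for any $\theta\in[0,1]$ and any $y\in\bR$ the convexity of $y\mapsto e^{y}$ gives $e^{\theta y}\leq \theta e^{y}+(1-\theta)$. Applying this pointwise with $y=\beta^{*}T$ and $\theta=\beta/\beta^{*}\in[0,1]$, for any $\beta\in[0,\beta^{*}]$ I obtain
\[
e^{\beta T}-1\;\leq\;\frac{\beta}{\beta^{*}}\bigl(e^{\beta^{*}T}-1\bigr).
\]

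Taking $E_{\nu}[\,\cdot\,]$ and then supremum over $\nu$ yields
\[
G(\beta)\;\leq\;1+\frac{\beta}{\beta^{*}}\bigl(G(\beta^{*})-1\bigr)\;\leq\;1+\frac{\beta}{\beta^{*}}\,K,
\]
the last inequality trivially absorbing the case $K<1$ (where in fact $G(\beta)\leq 1$ already from the first bound). Hence to force $G(\beta)\leq 1+\epsilon$ it suffices that $\beta\leq \beta^{*}\epsilon/K$, together with $\beta\leq\beta^{*}$ so that the interpolation step is legal.

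Finally, I would verify that the stated range $\beta\in[0,(\beta^{*}/2)((\epsilon/K)\wedge 1)]$ satisfies both constraints: the factor $(\epsilon/K)\wedge 1$ forces $\beta\leq\beta^{*}/2\leq\beta^{*}$ even when $\epsilon/K$ is large, while in the regime $\epsilon\leq K$ one has $\beta\leq \beta^{*}\epsilon/(2K)\leq \beta^{*}\epsilon/K$. There is essentially no obstacle: the lemma is a quantitative continuity statement for $\beta\mapsto G(\beta)$ at $\beta=0$, and its whole content is the one-line convexity bound above; the factor $1/2$ and the $\wedge 1$ are present only to package the conclusion into a single clean formula without splitting cases on whether $\epsilon$ is smaller or larger than $K$.
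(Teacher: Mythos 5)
Your proof is correct, but it takes a different route from the paper's. You interpolate directly: writing $\theta=\beta/\beta^{*}\in[0,1]$, convexity of the exponential gives $e^{\theta y}\leq\theta e^{y}+(1-\theta)$ pointwise, hence $G(\beta)\leq 1+\tfrac{\beta}{\beta^{*}}(G(\beta^{*})-1)\leq 1+\tfrac{\beta}{\beta^{*}}K$, and the stated range follows. (Note the convex-combination inequality holds for every real $y$, so your parenthetical appeal to $T\geq 0$ is not actually needed at this step.) The paper instead differentiates: it bounds $\tfrac{d}{d\beta}G^{\nu}(\beta)=E_{\nu}[Te^{\beta T}]$ by $\delta^{-1}G^{\nu}(\delta+\beta)$ using $T\leq\delta^{-1}e^{\delta T}$, fixes $\delta=\beta^{*}/2$ to get the uniform derivative bound $2K/\beta^{*}$ on $[0,\beta^{*}/2]$, and integrates from $G^{\nu}(0)=1$. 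Your argument is more elementary --- it needs no dominated convergence or interchange of differentiation and expectation, and no monotonicity of $\beta\mapsto G^{\nu}(\beta)$ --- and it yields the slightly sharper linear bound $1+\beta K/\beta^{*}$ in place of the paper's $1+2\beta K/\beta^{*}$, so the stated range (which was calibrated to the weaker bound) is satisfied with room to spare. Both proofs deliver the key feature the paper needs downstream, namely that the admissible range of $\beta$ depends only on $K$ and $\beta^{*}$ and not on the family of measures.
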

\begin{proof}
Consider $G^\nu(\beta) = E_\nu[e^{\beta T}]$. For $\delta>0$ we have
\[E_\nu[Te^{\beta T}] \leq E_\nu[(\delta^{-1} e^{\delta T}) e^{\beta T}] = \frac{1}{\delta}E_\nu[e^{(\delta+\beta) T}] = \frac{1}{\delta} G^\nu(\delta+\beta)<\infty,\]
so using the dominated convergence theorem to exchange the order of integration and differentiation, in the region $\delta+\beta\leq\beta^*$ we have
\[\begin{split}\frac{d}{d\beta} G^\nu(\beta) &= \frac{d}{d\beta} E_\nu[e^{\beta T}] = E_\nu[Te^{\beta T}] \leq \frac{1}{\delta} G^\nu(\delta+\beta).\end{split}\]
Hence, fixing $\delta=\beta^*/2$, we have a (uniform) bound, on the derivative $\frac{d}{d\beta} G^\nu(\beta)$ for all $\beta\leq \beta^*/2$. Therefore, as $G^\nu(0)=1$, we know
\[G^\nu(\beta) \leq 1+\beta \Big(\frac{2}{\beta^*}G(\beta^*)\Big) \leq 1+\beta \Big(\frac{2}{\beta^*}K\Big)\]
which implies
\[G^\nu(\beta)\leq 1+\epsilon \quad \text{for all}\quad \beta \leq \frac{\beta^*}{2}\Big(\frac{\epsilon}{K} \wedge 1\Big).\]
Taking a supremum, we obtain the uniform bound $G(\beta)\leq1+\epsilon$.
\end{proof}

We now return to general properties of uniformly integrable Markov chains.

\begin{lemma}\label{ergoddiagbound}
 Let $X$ be a uniformly ergodic Markov chain, and let $|a_{xx}|:=-x^*Ax$ denote the rate of transitions out of state $x$. Then $\{|a_{xx}|\}_{x\in\X}$ is uniformly bounded above $0$.
\end{lemma}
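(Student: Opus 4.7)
The plan is to exploit the contrast between two bounds on $P_x(X_t=x)$: a \emph{lower} bound coming from the fact that the chain can only leave $x$ by jumping, and an \emph{upper} bound coming from uniform ergodicity. If $|a_{xx}|$ were too small, the chain would stay at $x$ with probability very close to $1$ for long times, which would be incompatible with the exponential convergence to $\pi$ provided $\pi$ does not itself place mass too close to $1$ on any single state.

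More concretely, first I would note that under $\bP_x$, the waiting time in state $x$ is exponential with parameter $|a_{xx}|$, so
\[
\bP_x(X_t=x) \geq e^{-|a_{xx}|\,t} \qquad \text{for every } t\geq 0.
\]
Second, by the definition of uniform ergodicity applied to $\mu=\delta_x$, the total variation bound yields
\[
\bP_x(X_t=x) \leq \pi(x) + R e^{-\rho t}.
\]
Combining these two inequalities gives $e^{-|a_{xx}|\,t}\leq \pi(x)+Re^{-\rho t}$ uniformly in $x$.

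The key auxiliary step — and the one I expect to require the most care — is showing that $c:=\sup_{x\in\X}\pi(x)$ is strictly less than $1$. Excluding the degenerate one-state case (where the lemma is vacuous or fails), irreducibility together with positive recurrence (a consequence of uniform ergodicity) forces $\pi(x)>0$ for every $x$. Picking any two distinct states $y_1,y_2\in\X$, one has for every $x\in\X$ either $\pi(x)\leq 1-\pi(y_1)$ or $\pi(x)\leq 1-\pi(y_2)$, and hence
\[
\sup_{x\in\X}\pi(x) \leq 1-\min(\pi(y_1),\pi(y_2)) < 1.
\]
With $c<1$ in hand, choose $t^\ast$ large enough that $Re^{-\rho t^\ast}\leq (1-c)/2$. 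Then $e^{-|a_{xx}|\,t^\ast}\leq (1+c)/2<1$, so
\[
|a_{xx}| \geq \frac{-\log\big((1+c)/2\big)}{t^\ast} =: \epsilon > 0,
\]
and $\epsilon$ depends only on $R,\rho$ and $c$, giving the uniform lower bound $\inf_{x\in\X}|a_{xx}|\geq \epsilon>0$ as required.
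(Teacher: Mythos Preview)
Your argument is correct. Both your proof and the paper's rest on the same lower bound $\bP_x(X_t=x)\geq e^{-|a_{xx}|t}$ coming from the exponential holding time, but you diverge in how the contradiction is closed. The paper passes to the $\delta$-skeleton and invokes Meyn--Tweedie's characterisation of uniform ergodicity (their Theorem~16.2.2(iii)) to obtain a uniform bound on the expected number of skeleton steps before leaving $x$, then reads off the bound on $|a_{xx}|$ from a geometric-trials identity. You instead stay in continuous time and use the defining inequality $\|P_t\delta_x-\pi\|_{TV}\leq Re^{-\rho t}$ directly, together with the elementary observation that irreducibility and $|\X|\geq 2$ force $\sup_x\pi(x)<1$. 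Your route is more self-contained---it avoids the external reference and the passage to the skeleton---while the paper's route has the advantage of fitting the lemma into the standard Meyn--Tweedie framework used elsewhere in the section. The explicit caveat about the one-state case is a nice touch; the paper implicitly excludes this through its standing irreducibility assumption and the fact that a nontrivial chain is being considered.
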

\begin{proof}
 We know that the probability of being in the same state after a fixed timestep $\delta$ is given by $\exp(-\delta |a_{xx}|)$. Taking the $\delta$ skeleton of the chain, from \cite[Theorem 16.2.2(iii)]{Meyn2009} we see that the expected time until a transition from state $x$ is uniformly bounded with respect to $x$. By a geometric distribution argument this expectation is given by $(1-\exp(-\delta |a_{xx}|))^{-1}$, hence $|a_{xx}|$ is bounded away from zero.
\end{proof}

\begin{lemma}\label{lem:jumpsampledergodic}
 Let $X$ be a uniformly ergodic Markov chain on countable states, with bounded transition rates. Then the discrete time chain obtained by observing $X$ at each of its jump times is also uniformly ergodic.
\end{lemma}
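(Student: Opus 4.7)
The plan is to transfer the uniform exponential hitting-time bound of Lemma~\ref{ergodlem1} from the continuous-time chain $X$ to its jump chain $\tilde X$, and then invoke the standard discrete-time converse to conclude uniform ergodicity of the latter.

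First, I would fix a state $x_C$ and some $\beta^{\ast}>0$ with $\sup_x E_x[e^{\beta^{\ast}\tau_C}]<\infty$ from Lemma~\ref{ergodlem1}, and note that the bounded rates assumption together with Lemma~\ref{ergoddiagbound} give $0<c\leq |a_{xx}|\leq K<\infty$ uniformly in $x$. The main step is to decompose the continuous-time hitting time as $\tau_C=\sum_{n=0}^{\tilde\tau_C-1}\xi_n$, where $\tilde\tau_C$ is the jump-chain hitting time of $x_C$ and the holding times $\xi_n$ are, conditionally on the jump chain, independent with $\xi_n\sim\mathrm{Exp}(r_n)$ and $r_n=|a_{\tilde X_n \tilde X_n}|\in[c,K]$. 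Since $r\mapsto r/(r-\beta)$ is decreasing on $(\beta,\infty)$, the conditional moment generating function $E[e^{\beta\tau_C}\mid\tilde X]=\prod_{n=0}^{\tilde\tau_C-1} r_n/(r_n-\beta)$ is bounded below by $(K/(K-\beta))^{\tilde\tau_C}$. Taking expectations and a supremum over initial states, and choosing $\beta\in(0,c\wedge\beta^{\ast})$ small enough (using Lemma~\ref{lem:ergodboundconverge} to keep the right-hand side of Lemma~\ref{ergodlem1} finite), I would obtain
\[
\sup_x E_x\big[e^{\gamma\tilde\tau_C}\big]<\infty,\qquad\gamma:=\log\tfrac{K}{K-\beta}>0,
\]
which is precisely the discrete-time analogue of Lemma~\ref{ergodlem1} for the jump chain.

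Finally, with a uniform exponential moment on the jump-chain hitting time to a single state $x_C$ and irreducibility inherited from $X$, I would invoke the classical discrete-time converse (for instance Meyn and Tweedie~\cite{Meyn2009}, Theorem~16.0.2) --- equivalently, by building a Doeblin minorisation on some iterate $\tilde P^{n_0}$ --- to deduce uniform ergodicity of $\tilde X$ in the sense of Definition~\ref{def:unifergodic}. The step I expect to require the most care is aperiodicity, since the two-state jump chain is manifestly periodic and fails to be uniformly ergodic under Definition~\ref{def:unifergodic}; either one restricts to $|\X|\geq 3$ together with a mild structural assumption giving coprime cycle lengths at $x_C$, or one reads ``uniformly ergodic'' for $\tilde X$ as geometric convergence along each periodic sub-class, which is in any case what the Nummelin-splitting arguments in subsequent sections of the paper will use.
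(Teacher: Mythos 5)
Your argument is correct, but it takes a genuinely different (and slightly stronger) route than the paper's. The paper works entirely at the level of \emph{first} moments: writing $T_N=\sum_{i\le N}(T_i-T_{i-1})$ and using the upper bound $\alpha$ on the transition rates to get $E[T_i-T_{i-1}\mid\F_{T_{i-1}}]\ge\alpha^{-1}$, it obtains $E_x[T_N]\ge\alpha^{-1}E_x[N]$, and then invokes the Meyn--Tweedie characterisation of uniform ergodicity as $\sup_x E_x[\tau_C]<\infty$ for a petite singleton $x_C$, in both the continuous- and discrete-time directions. You instead transfer the \emph{exponential} moment of Lemma~\ref{ergodlem1} through the conditional mgf of the holding times, $E[e^{\beta\tau_C}\mid\tilde X]=\prod_n r_n/(r_n-\beta)\ge(K/(K-\beta))^{\tilde\tau_C}$; the monotonicity of $r\mapsto r/(r-\beta)$ and the two-sided bound $c\le r_n\le K$ make this airtight. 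Both proofs exploit the same structural fact (bounded rates force holding times to be stochastically bounded below) and both finish with the same hitting-time-to-petite-set converse; yours is a little longer but delivers $\sup_x E_x[e^{\gamma\tilde\tau_C}]<\infty$ directly, which is in fact the only consequence of the lemma the paper uses downstream (Corollary~\ref{cor:exphittingdiscrete} and Lemma~\ref{ergodlem2}), whereas the paper re-derives the exponential moment afterwards by rerunning the geometric-trials argument of Lemma~\ref{lem:basichitting} in discrete time. Your aperiodicity caveat is well taken --- the jump chain of a two-state chain is periodic and fails Definition~\ref{def:unifergodic} literally --- but note that the paper's own proof is silent on exactly the same point, and the issue is harmless for the paper's purposes since only the exponential moment of the discrete meeting time is ever used. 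One cosmetic remark: the appeal to Lemma~\ref{lem:ergodboundconverge} is unnecessary here, since Lemma~\ref{ergodlem1} already gives finiteness of $G_{x_C}(\beta)$ for all $\beta\le\beta^*$.
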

\begin{proof}
Let $x_C$ be an arbitrary state, which is petite as our chain has countably many states. Let $N$ be the number of jumps up to hitting $x_C$, and let the $i$th jump be denoted $T_i$, with $T_0=0$. The uniform ergodicity of a Markov chain is then equivalent to stating that $\sup_xE_x[T_N]<\infty$, where $E_x$ denotes the expectation conditional on $X_0=x$. If $\alpha$ is an upper bound on the transition rates, then
\[\begin{split} E_x[T_N] &= E_x\Big[\sum_{i=1}^N(T_i-T_{i-1})]\Big]=E_x\Big[\sum_{i=1}^\infty I_{\{T_{i-1}<T_N\}} E[T_i-T_{i-1}|\F_{T_{i-1}}]\Big]\\
   & \geq E_x\Big[\sum_{i=1}^\infty I_{\{T_{i-1}<T_N\}} \alpha^{-1}\Big] =\alpha^{-1}E_x[N]
  \end{split}
 \]
Hence $\sup_x E_x[N]<\infty$, so the Markov chain generated by observations at jump times is also uniformly ergodic.
\end{proof}

To prove ergodicity properties, a standard technique is based on coupling copies of the Markov chain (see Lindvall \cite{Lindvall2002} for an overview of these methods). We here shall work with the simplest form of coupling, when we take two independent copies $X$ and $Y$ of our chain, which have distinct initial states, and we study their first meeting time. We note that throughout this section, $Y$ will be used to refer to a copy of a Markov chain, rather than to the solution of a BSDE.

\begin{lemma}\label{lem:basichitting}
 Let $X$ and $Y$ be two independent copies of the uniformly ergodic chain with  rate matrix $A$. Let $T=\inf\{t:X_t = Y_t\}$. Then there exists $\beta>0$ such that
\[G^*(\beta):=\sup_{x,y}E_{xy}[e^{\beta T}]<\infty,\] where $E_{xy}$ is the expectation conditional on $X_0=x$ and $Y_0= y$.
\end{lemma}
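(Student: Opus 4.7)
The plan is to pass to the product chain $(X, Y)$ on $\X \times \X$ and establish a Doeblin-type minorisation at some fixed horizon $t_0$, uniformly in the starting pair $(x, y)$. Iterating this minorisation via the Markov property produces geometric decay of $P_{xy}(T > nt_0)$, and integration in $t$ then gives the uniform exponential moment.

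The first step is to extract a uniform pointwise lower bound $P_x(X_{t_0} = x_C) \geq \epsilon$ from uniform ergodicity. Because $\pi$ is a probability measure on a countable set, it must charge some atom $x_C$ with $\pi(x_C) > 0$. Choosing $t_0$ so that $R e^{-\rho t_0} \leq \pi(x_C)/2$, uniform ergodicity yields
\[\sup_{x \in \X}\sum_{y \in \X} |P_x(X_{t_0} = y) - \pi(y)| \leq \pi(x_C)/2,\]
and reading off the $x_C$-term of the sum gives $P_x(X_{t_0} = x_C) \geq \pi(x_C)/2 =: \epsilon > 0$ for every starting state $x$. Independence of $X$ and $Y$ then gives
\[P_{xy}(T \leq t_0) \geq P_x(X_{t_0} = x_C)\, P_y(Y_{t_0} = x_C) \geq \epsilon^2\]
uniformly in $(x,y)$, since $\{X_{t_0} = x_C = Y_{t_0}\} \subset \{T \leq t_0\}$.

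The second step is iteration. By the Markov property of the product chain applied at time $nt_0$, on the event $\{T > nt_0\}$ the continuation from $nt_0$ onward is again an independent pair of copies of the chain, and the one-step bound applied to this continuation gives $P_{xy}(T > (n+1)t_0 \mid T > nt_0) \leq 1 - \epsilon^2$, whence $P_{xy}(T > nt_0) \leq (1-\epsilon^2)^n$ uniformly in $(x, y)$. Writing $E_{xy}[e^{\beta T}] = 1 + \beta \int_0^\infty e^{\beta t} P_{xy}(T > t)\, dt$ and dominating the integrand by $e^{\beta(n+1)t_0}(1-\epsilon^2)^n$ on each interval $[nt_0, (n+1)t_0)$ yields a convergent geometric series, uniformly in $(x, y)$, for every $\beta < -t_0^{-1}\log(1-\epsilon^2)$, which is the claim.

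The one point that genuinely requires care is the extraction of the pointwise minorisation $P_x(X_{t_0} = x_C) \geq \epsilon$ from the total-variation statement of uniform ergodicity: total variation alone does not prevent the mass from being spread arbitrarily thinly across the countable space, and the argument relies on the observation that any invariant probability measure on a countable set must concentrate positive mass on at least one atom, which then serves as the common rendezvous point for the two independent copies.
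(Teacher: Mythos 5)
Your argument is correct, but it takes a genuinely different route from the paper. You run a Doeblin-type minorisation on the deterministic grid $\{nt_0\}$: both independent copies sit at the atom $x_C$ at time $t_0$ with probability at least $\epsilon^2$ uniformly in the starting pair, iteration of the Markov property gives $P_{xy}(T>nt_0)\leq(1-\epsilon^2)^n$, and integrating the tail yields the exponential moment with the explicit threshold $\beta<-t_0^{-1}\log(1-\epsilon^2)$. The paper instead performs a geometric-trials argument over a sequence of \emph{random} times $\tau_0,\tau_1,\dots$ at which $X$ returns to $x_C$ (spaced at least $t^*$ apart), pays for the random inter-visit times using the uniform bound $G_{x_C}(\beta)=\sup_x E_x[e^{\beta\tau_C}]<\infty$ from Lemma \ref{ergodlem1}, and invokes Lemma \ref{lem:ergodboundconverge} to shrink $\beta$ until $e^{\beta t^*}(1-p)G_{x_C}(\beta)<1$. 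Your version is shorter and more self-contained --- it bypasses Lemmas \ref{ergodlem1} and \ref{lem:ergodboundconverge} entirely and gives a cleaner explicit rate --- and your flagged point about extracting a pointwise minorisation $P_x(X_{t_0}=x_C)\geq\pi(x_C)/2$ from the total-variation statement is exactly the right thing to worry about on a countable space (the paper relies on the same fact, somewhat implicitly, when it asserts $P(Y_t=x_C)>p$). What the paper's heavier formulation buys is reusability: the identical renewal skeleton, with mgf control of random inter-renewal increments, is what carries over to the split-chain setting of Lemma \ref{lem:exponentialhitting}, where the relevant renewal times (simultaneous entries into $\X_1$) are intrinsically random and the estimates must be made uniform in the perturbed matrix $B$ given only $\gamma$ and $A$; a fixed-grid Doeblin argument does not transplant there as directly.
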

\begin{proof}
 Let $x_C$ be an arbitrary state, with ergodic probability $\pi(x_C)$. As the chain is uniformly ergodic, we know
\[\sup_y|P(Y_t=x_C|Y_0=y)-\pi(x_C)|\to 0 \text{ as }t\to\infty,\]
so we can find $t^*<\infty$ such that $\sup_y|P(Y_{t^*}=x_C|Y_0=y)-\pi(x_C)|< \min\{\pi(x_C), 1-\pi(x_C)\}/2,$ in particular, $P(Y_{t}=x_C)> p$ for all $t\geq t^*$, for some fixed $p>0$.

Define the following sequence of stopping times,
\[\tau_0 = \inf\{t\geq t^*: X_t = x_C\}, \qquad \tau_i = \inf\{t\geq \tau_{i-1}+t^*: X_t = x_C\}.\]
We shall show that $T_C = \inf\{\tau_i:X_{\tau_i}=Y_{\tau_i}\}$ has exponential moments, the result will then follow as $T\leq T_C$. As the two paths are independent, the definition of $p$ implies that for each $i$, $P(Y_{\tau_i}=x_C|\F_{\tau_{i-1}}) \geq p$. Hence a geometric trials argument gives
\[\begin{split} 
   E_{xy}[e^{\beta T_C}] &\leq E_{xy}\Big[\sum_{i=0}^\infty e^{\beta \tau_i} (1-p)^i\Big]\\
&= \sum_{i=0}^\infty E_{xy}\Big[\prod_{j\leq i} \Big(e^{\beta t^*}  (1-p)E_{xy}[e^{\beta(\tau_j-\tau_{j-1}-t^*)}|\F_{\tau_{j-1}+t^*}]\Big)\Big]\\
&\leq \sum_{i=0}^\infty E_{xy}\Big[\prod_{j\leq i} \Big(e^{\beta t^*} (1-p)G_{x_C}(\beta) \Big)\Big]\\
&= \sum_{i=0}^\infty (e^{\beta t^*} (1-p)G_{x_C}(\beta))^i\\
&= \Big(1- \big(e^{\beta t^*} (1-p)G_{x_C}(\beta)\big)\Big)^{-1}.
  \end{split}
\]
Hence, by taking $\beta$ small enough that $e^{\beta t^*} G_{x_C}(\beta) < (1-p)^{-1/2}$, which can be done by Lemma \ref{lem:ergodboundconverge},  we have $G^*(\beta)<\infty$.
\end{proof}

\begin{corollary}\label{cor:exphittingdiscrete}
 If $X$ and $Y$ are two independent copies of a uniformly ergodic Markov chain with bounded transition rates, and if $N$ is the total number of jumps (of both copies) until they first meet, it follows that $\sup_{x,y}E_{xy}[e^{\beta N}]<\infty$ for some $\beta>0$.
\end{corollary}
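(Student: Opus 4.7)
My plan is to leverage the uniform exponential moment of the meeting time $T$ from Lemma \ref{lem:basichitting} and pass to $N$ by noting that, since transition rates are bounded, realising $N$ jumps forces $T$ to be large in a Laplace-transform sense. Intuitively, each inter-jump time of $(X,Y)$ is conditionally exponential with rate at most $2\alpha$, so its conditional moment generating function at $\beta$ is at least $2\alpha/(2\alpha-\beta)>1$, giving geometric growth with $N$.

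Concretely, let $0=\tau_0<\tau_1<\tau_2<\ldots$ be the jump times of the joint process $(X,Y)$ and $\xi_k=\tau_k-\tau_{k-1}$ the inter-arrival times. By the standard theory of regular jump Markov chains, conditional on the embedded chain $(\hat X_k,\hat Y_k):=(X_{\tau_k},Y_{\tau_k})_{k\geq 0}$, the $\xi_k$ are independent exponentials with rate $\lambda_{k-1}:=|a_{\hat X_{k-1}\hat X_{k-1}}|+|a_{\hat Y_{k-1}\hat Y_{k-1}}|$. Observe that $N=\inf\{k:\hat X_k=\hat Y_k\}$ depends only on the embedded chain, and $T=\sum_{k=1}^N\xi_k$. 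Lemma \ref{ergoddiagbound} provides a uniform lower bound $\lambda_{k-1}\geq 2\gamma_0>0$, while the assumed boundedness of rates gives $\lambda_{k-1}\leq 2\alpha$.

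For any $\beta\in(0,2\gamma_0)$ one has $E[e^{\beta\xi_k}\mid(\hat X,\hat Y)]=\lambda_{k-1}/(\lambda_{k-1}-\beta)$. Since $\lambda\mapsto\lambda/(\lambda-\beta)$ is decreasing on $(\beta,\infty)$ and $\lambda_{k-1}\leq 2\alpha$, each factor is bounded below by $e^\gamma$ where $\gamma:=\log\bigl(2\alpha/(2\alpha-\beta)\bigr)>0$. Applying the tower property then yields
\[
E_{xy}[e^{\beta T}]=E_{xy}\Big[\prod_{k=1}^N\frac{\lambda_{k-1}}{\lambda_{k-1}-\beta}\Big]\geq E_{xy}[e^{\gamma N}].
\]
Combining Lemma \ref{lem:basichitting} with the continuity statement of Lemma \ref{lem:ergodboundconverge} lets us shrink $\beta$ so that simultaneously $\beta<2\gamma_0$ and $\sup_{x,y}E_{xy}[e^{\beta T}]<\infty$, which yields $\sup_{x,y}E_{xy}[e^{\gamma N}]<\infty$ as required.

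The only delicate ingredient is the conditional independence and exponential distribution of the $\xi_k$ given the embedded chain; this is a textbook property of regular continuous-time Markov chains with bounded rates, and the remainder is a short monotonicity computation followed by invocation of the two cited lemmas.
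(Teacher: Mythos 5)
Your proof is correct, but it takes a genuinely different route from the paper. The paper's own proof is a one-line reduction: by Lemma \ref{lem:jumpsampledergodic} the jump-sampled discrete-time chain is itself uniformly ergodic, and one then re-runs the coupling/geometric-trials argument of Lemma \ref{lem:basichitting} verbatim in discrete time to get exponential moments of $N$ directly. You instead never touch Lemma \ref{lem:jumpsampledergodic}: you exploit the conditional structure of the holding times given the embedded chain to obtain the pointwise comparison
\[
E_{xy}[e^{\beta T}]=E_{xy}\Big[\textstyle\prod_{k=1}^{N}\frac{\lambda_{k-1}}{\lambda_{k-1}-\beta}\Big]\geq E_{xy}\big[e^{\gamma N}\big],\qquad \gamma=\log\tfrac{2\alpha}{2\alpha-\beta},
\]
which transfers the bound of Lemma \ref{lem:basichitting} to $N$ in one step. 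The ingredients you use are exactly the ones available: the upper bound $\alpha$ on the rates (so each holding time contributes a factor at least $2\alpha/(2\alpha-\beta)>1$) and the lower bound from Lemma \ref{ergoddiagbound} (so that $\beta<2\gamma_0\leq\lambda_{k-1}$ keeps every conditional mgf finite and the Tonelli computation valid); the facts that $N$ is measurable with respect to the embedded chain and that $T=\tau_N$ (no simultaneous jumps a.s.) are both sound. Your approach buys an explicit quantitative link between the exponents for $T$ and for $N$ and avoids having to restate and re-prove a discrete-time analogue of Lemma \ref{lem:basichitting}, at the cost of invoking the conditional-independence description of holding times; the paper's route is shorter to state given its surrounding machinery but leaves more to the reader. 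Two cosmetic remarks: the appeal to Lemma \ref{lem:ergodboundconverge} at the end is unnecessary, since $e^{\beta T}$ is monotone in $\beta$ for $T\geq 0$ and finiteness at one $\beta^*$ already gives finiteness for all smaller $\beta$; and you should note explicitly that the case $X_0=Y_0$ gives $N=0$ with an empty product, which is consistent with your identity.
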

\begin{proof}
 By Lemma \ref{lem:jumpsampledergodic}, the discrete time chain obtained by observing only when one of the chains jumps is also uniformly ergodic. Hence simply apply a discrete-time version of Lemma \ref{lem:basichitting} to this setting, as the argument is identical.
\end{proof}

The following lemma provides a `lower' exponential moment.
\begin{lemma}\label{ergodlem2}
Let $X$ and $Y$ be two independent copies of a uniformly ergodic chain with bounded transition rates, and let $N$ be the total number of jumps (of both copies) until they first meet. For any fixed $\gamma\in\,]0,1[$ we know,
\[q_\gamma:=\inf_{x,y}E_{xy}[\gamma^N]>0.\]
\end{lemma}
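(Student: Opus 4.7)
The plan is to deduce this \emph{lower} exponential moment bound directly from the \emph{upper} exponential moment bound already established in Corollary \ref{cor:exphittingdiscrete}, via a single application of Jensen's inequality.

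First I would observe that for $\gamma \in \,]0,1[$, the map $n \mapsto \gamma^n = e^{n\log\gamma}$ is convex (its second derivative is $(\log\gamma)^2 \gamma^n > 0$). Hence, viewing $N$ as a nonnegative random variable under $\bP_{xy}$, Jensen's inequality gives
\[
E_{xy}[\gamma^N] \geq \gamma^{E_{xy}[N]}.
\]
Thus it suffices to exhibit a finite constant $M$, independent of $(x,y)$, with $E_{xy}[N] \leq M$.

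Next I would produce such an $M$ from Corollary \ref{cor:exphittingdiscrete}. That corollary provides $\beta>0$ and $K<\infty$ with $\sup_{x,y} E_{xy}[e^{\beta N}] \leq K$. Using the elementary inequality $\beta n \leq e^{\beta n} - 1$ valid for $n\geq 0$, we get
\[
E_{xy}[N] \leq \beta^{-1}\bigl(E_{xy}[e^{\beta N}] - 1\bigr) \leq \beta^{-1}(K-1) =: M,
\]
uniformly in $(x,y)$.

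Combining the two steps yields $\inf_{x,y} E_{xy}[\gamma^N] \geq \gamma^M > 0$, as required. I do not expect any serious obstacle here: the whole difficulty (controlling $N$ uniformly in the starting pair) has already been absorbed into Lemma \ref{lem:basichitting} and Corollary \ref{cor:exphittingdiscrete}, and the remaining content is the convexity observation plus the linear-in-exponential bound.
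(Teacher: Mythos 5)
Your proof is correct. Jensen's inequality applies since $n\mapsto\gamma^n$ is convex on $[0,\infty)$, the bound $\beta n\le e^{\beta n}-1$ legitimately converts the uniform exponential moment of Corollary \ref{cor:exphittingdiscrete} into a uniform bound $M$ on $E_{xy}[N]$, and monotonicity of $\gamma^{(\cdot)}$ for $\gamma\in\,]0,1[$ then gives the explicit lower bound $\gamma^M>0$. The paper reaches the same conclusion from the same key input (Corollary \ref{cor:exphittingdiscrete}) but by a different deduction: it turns the exponential moment into a pointwise decay $q(x,y;n)\le ce^{-\beta n}$ of the hitting-time distribution, hence a uniform tail bound $\sum_{n\ge m}q(x,y;n)$, and then observes that for $m$ large enough a uniformly positive mass sits on $\{N<m\}$, so that $E_{xy}[\gamma^N]\ge \gamma^{m-1}\sum_{n<m}q(x,y;n)>0$. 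Your route is shorter and yields a cleaner explicit constant $\gamma^{(K-1)/\beta}$; the paper's route is essentially a Chernoff-type truncation argument whose intermediate tail estimate is of the same flavour as estimates used elsewhere in Section \ref{sec:ergodic}, which is presumably why the authors phrased it that way. Either argument is acceptable, and yours involves no gap.
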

\begin{proof}
By Lemma \ref{lem:jumpsampledergodic} the Markov chain generated by observations at jump times is also uniformly ergodic. Let $q(x,y;n) = P(N=n|X_0=x, Y_0=y)$. By Corollary \ref{cor:exphittingdiscrete} we know that there exists $\beta>0$ such that $\sum_n e^{\beta n} q(x,y;n)$ is uniformly bounded for all $x,y$. Therefore, for some constant $c$,  we know $0\leq q(x,y;n)< c e^{-\beta n}$ for all $x,y$ and $n$, hence $\sum_{n=m}^\infty q(x,y;n)< \frac{c}{1-e^{\beta}} e^{-\beta m+1}$. As we also know $\sum_n q(x,y;n)=1$, this implies that, taking $m>\log(\frac{1-e^{\beta}}{c})/\beta -1$, there is a positive lower bound on $\sum_{n=0}^{m-1} q(x,y;n)$ which is uniform in $x$ and $y$, and hence $q_\gamma>0$.
\end{proof}

\subsection{Perturbed rates}

We shall see later that the solutions to an infinite horizon BSDE can be viewed as expectations under a perturbation of the rate matrix of the underlying Markov chain. Hence, we wish to show that  if the underlying chain is uniformly ergodic then this remains the case under the perturbed measure. To do so, we use a variant of the Nummelin splitting (see \cite[Chapter 5]{Meyn2009}), which is commonly used to prove ergodicity properties.

The class of perturbations under consideration are neatly expressed through the following definition.

\begin{definition}\label{control}
 Consider $A$ and $B$ (possibly infinite) rate matrices, that is, matrices with $A_{ij}\geq 0$ for $i\neq j$ and $\sum_i A_{ij} = 0$ for all $j$, and similarly for $B$. We shall write $B\succeq A$ whenever $B-A$ is also a rate matrix.

We shall say that $B$ is \emph{controlled} by $A$ whenever there exists some $\gamma>0$ such that $\gamma A \preceq B.$  We shall say that $B$ is \emph{strictly controlled} by $A$ whenever we also know that $B-\gamma A$ has diagonal entries bounded above by $-\gamma$.
\end{definition}

Note that without loss of generality $\gamma$ can be taken to be arbitrarily small, in particular, we assume it is less than or equal to a positive lower bound on $|a_{xx}|$, which exists by Lemma \ref{ergoddiagbound}. Note also that the relation $\succeq$ defined in this way is a valid partial ordering of rate matrices, which has not, to our knowledge, been previously explored.

\begin{theorem}\label{thm:strictcontrolergodicity}
 Let $A$ and $B$ be rate matrices, and suppose $B$ is strictly controlled by $A$ with constant $\gamma$. If, under the measure induced by $A$, the process $X$ is uniformly ergodic, then $X$ is also uniformly ergodic under the measure induced by $B$. Furthermore, the constant $R$ and the rate $\rho$ of convergence to the ergodic distribution, in Definition \ref{def:unifergodic}, can be taken as functions only of $A$ and $\gamma$, and the constant $R$ can be made arbitrarily close to 1 (with a corresponding decrease in the rate $\rho$).
\end{theorem}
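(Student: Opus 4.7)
My plan is to prove the theorem by a continuous-time Nummelin-style splitting. Writing $B = \gamma A + C$ with $C := B - \gamma A$, the control hypothesis ensures $C$ is again a rate matrix, and strict control gives $|C_{xx}| \ge \gamma$; combined with the paper's standing assumption of bounded rates (which I apply to both $A$ and $B$), we also have $c_{\max} := \sup_x |C_{xx}| < \infty$. The idea is to realise the $B$-chain as a superposition of two independent jump mechanisms driven by $\gamma A$ and $C$, exploit the $\gamma A$-component to import the uniform ergodicity of $A$, and finish with a Doeblin argument.

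\emph{Step 1 (pathwise minorisation).} The decomposition $B = \gamma A + C$ at the level of generators permits us to construct the $B$-chain as the superposition of two independent jump mechanisms: ``type I'' driven by $\gamma A$ and ``type II'' driven by $C$. On the event that no type II jump occurs during $[0, t_0]$, the $B$-chain is pathwise identical in law to a $\gamma A$-chain, and the conditional probability of that event given a $\gamma A$-path $(X_s)_{s \le t_0}$ equals $\exp\!\big(-\int_0^{t_0} |C_{X_s X_s}|\,ds\big) \ge e^{-c_{\max} t_0}$. Integrating against the $\gamma A$-law yields the pointwise minorisation
\[
P^B_{t_0}(x, \cdot) \;\ge\; e^{-c_{\max} t_0}\, P^{\gamma A}_{t_0}(x, \cdot) \qquad \text{on } \X,
\]
valid for every $x \in \X$.

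\emph{Step 2 (Doeblin condition).} Uniform ergodicity of $A$ at rate $\rho$ with constant $R$ gives uniform ergodicity of $\gamma A$ at rate $\gamma\rho$ with the same $R$, so for any $\eta \in (0, 1)$ I can pick $t_0 = t_0(\eta, A, \gamma)$ large enough that $\|P^{\gamma A}_{t_0}(x, \cdot) - P^{\gamma A}_{t_0}(y, \cdot)\|_{TV} \le \eta$ uniformly in $x, y \in \X$. Using the identity $(\mu \wedge \nu)(\X) = 1 - \tfrac{1}{2}\|\mu - \nu\|_{TV}$ for probability measures (in the paper's normalisation of the TV norm) together with Step 1,
\[
\big(P^B_{t_0}(x, \cdot) \wedge P^B_{t_0}(y, \cdot)\big)(\X) \;\ge\; e^{-c_{\max} t_0}\Big(1 - \frac{\eta}{2}\Big) \;=:\; \delta \;>\; 0,
\]
uniformly in $x, y$. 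This is the Doeblin (Markov--Dobrushin) condition, from which standard theory \cite{Meyn2009} yields the existence of an invariant measure $\pi^B$ together with the geometric bound $\|P^B_t \mu - \pi^B\|_{TV} \le 2(1-\delta)^{\lfloor t/t_0 \rfloor}$ for every probability measure $\mu$.

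\emph{Step 3 (constants and the main obstacle).} Rewriting the bound in the form $\|P^B_t \mu - \pi^B\|_{TV} \le R\, e^{-\rho' t}$ with $\rho' = -\log(1-\delta)/t_0$ produces $R$ and $\rho$ depending only on $A$ and $\gamma$ (through $R$, $\rho$, $c_{\max}$, and $\gamma$). For the assertion that the leading constant can be brought arbitrarily close to $1$ at the cost of a correspondingly smaller $\rho$, I iterate Doeblin over $n$ cycles of length $t_0$, obtaining $\|P^B_{nt_0} \mu - \pi^B\|_{TV} \le 2(1-\delta)^n$, and then absorb the leading factor into a slightly smaller exponential rate. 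The main technical obstacle is internal to Step 2: the quantity $\delta = e^{-c_{\max} t_0}(1 - \eta/2)$ contains two competing factors, since increasing $t_0$ shrinks $\eta$ exponentially at rate $\gamma\rho$ while also shrinking the prefactor $e^{-c_{\max} t_0}$; an elementary calculus-of-one-variable optimisation in $t_0$ is required to exhibit a strictly positive $\delta$ that depends only on $A$ and $\gamma$. Once that balancing step is performed, the remainder of the argument is routine.
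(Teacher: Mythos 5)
Your route (realising the $B$-chain as a superposition of a $\gamma A$-mechanism and a $C$-mechanism with $C=B-\gamma A$, thinning away the $C$-jumps to get a minorisation, then applying Doeblin) is genuinely different from the paper's, which splits the state space \`a la Nummelin into layers $\X_0,\X_1$, couples two independent copies of the split chain, and establishes exponential moments of the first meeting time. Your Step 1 identity $P^B_{t_0}(x,\cdot)\geq E^{\gamma A}_x\big[e^{-\int_0^{t_0}|c_{X_sX_s}|ds}I_{\{X_{t_0}\in\cdot\}}\big]$ is a correct Feynman--Kac/thinning bound, and the Markov--Dobrushin contraction in Step 2 is sound. (Incidentally, the ``main obstacle'' you flag is not one: any fixed $t_0$ with $Re^{-\gamma\rho t_0}<1$ already gives $\delta>0$; no optimisation is needed for positivity, only for a good rate.)

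The genuine gap is the constant $c_{\max}=\sup_x|b_{xx}-\gamma a_{xx}|$. Strict control gives only the \emph{lower} bound $|b_{xx}-\gamma a_{xx}|\geq\gamma$; no upper bound on the entries of $B$ is among the hypotheses, so $c_{\max}$ may be infinite, in which case your minorisation constant $e^{-c_{\max}t_0}$ is zero and the Doeblin condition fails. Even when $c_{\max}<\infty$, it is a property of $B$, not of $A$ and $\gamma$, so your $R$ and $\rho$ are \emph{not} ``functions only of $A$ and $\gamma$'' as the theorem asserts (you list $c_{\max}$ among the dependencies and then claim dependence on $A,\gamma$ alone --- that is the bookkeeping error). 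This uniformity is the whole point of the theorem: in Lemma \ref{lem:PerturbErgodic} and Lemma \ref{lem:Discconverg} the matrix $B$ ranges over a Girsanov-perturbed family $B(Z^\alpha)$ and the ergodicity constants must be uniform in $\alpha$, which your bound does not deliver. The paper's splitting avoids this precisely because the ``extra'' rates $B-\gamma A$ are confined to the layer $\X_0$, where only a lower bound on the exit rate enters the estimates (see the bound $V_0(\beta)\leq(1-\beta/\gamma)^{-1}$ in the proof of Lemma \ref{lem:exponentialhitting}): a faster exit from $\X_0$ only helps, so no upper bound on $B$ is ever used. To repair your argument you would need either to add the hypothesis $\sup_x|b_{xx}|<\infty$ with constants allowed to depend on that supremum (a strictly weaker theorem, insufficient for the application), or to replace the crude bound $e^{-c_{\max}t_0}$ by an estimate exploiting only the structure of the decomposition, which essentially forces you back to something like the paper's layered construction.
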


The proof of this theorem is the main purpose of this section. We begin by defining a splitting of the Markov chain, \`a la \cite{Meyn2009}. We assume the conditions of the theorem throughout the remainder of the section, and fix a constant $\gamma$ satisfying Definition \ref{control}.

\begin{definition}\label{defn:splitting}
 We define the \emph{split space} of $X$ to be $\check\X = \X\times\{0,1\}$. For notational simplicity, we write $\X_0:= \X\times\{0\}$ and $\X_1:= \X\times\{1\}$, so that $\check\X = \X_0\cup\X_1$.

To split a measure $\nu$ on $\X$ into a measure $\check \nu$ on $\check \X$, for $\check y\in\check\X$, if $y$ is the projection of $\check y$, then
\[\check \nu(\check y) = \begin{cases}
                 (1-\gamma)\nu(y)&  \text{for } \check y\in \X_0,\\
                 \gamma \nu(y)&  \text{for } \check y\in \X_1.\\
                \end{cases}\]
If $\check \nu$ is a signed measure with a single negative component (e.g. a column of a rate matrix), then the negative component is not split, but is assigned to one of the two corresponding states in its entirety, yielding the two splittings $\check \nu^{(0)}, \check \nu^{(1)}$.

Finally we define the split rate matrix $\B$ by
\[\B\check x = \begin{cases} 
                \check {g_x}^{(0)}&  \check x=x\times\{0\}\in \X_0,\\
		\check {a_x}^{(1)}&  \check x=x\times\{1\}\in \X_1,\\
               \end{cases}
\]
where $g_x= (1-\gamma)^{-1}(B-\gamma A)x$ and $a_x = A x$.
\end{definition}

Intuitively, transitions occur from $\check x\in \X_1$ following the vector $Ax$, and from $\check x\in\X_0$ following the vector $(1-\gamma)^{-1}(B-\gamma A)$, except that the result is randomly split between $(\X_0,\X_1)$ with probabilities $(1-\gamma, \gamma)$. Note that, as in the classical Nummelin splitting, this split chain has marginal transition matrix $B$, that is, for any measure $q$ on $\X$, we know $\int \B d\check q = \int B dq.$

Our procedure is now to consider two independent copies of our Markov chain on the split space, and to show that the first meeting time of these chains admits exponential moments. This will allow us to prove the desired uniform ergodicity estimates. We prove this in the following extended lemma.


\begin{lemma}\label{lem:exponentialhitting}
 Suppose $B$ is strictly controlled by $A$, with constant $\gamma\leq \inf_x |a_{xx}|$, and let $\check X$ and $\check Y$ be copies of the chain on the split space, as described in Definition \ref{defn:splitting}.

 Let $T=\inf\{t:\check X_t=\check Y_t\}$ and
\[H^*(\beta) = \sup_{\check x, \check y} E[e^{\beta T}|\check X_0 = \check x, \check Y_0 = \check y].\]
For any $\epsilon>0$, there exists a constant $\tilde \beta_{\gamma\epsilon}$  such that $H^*(\beta)<1+\epsilon$ for all $\beta\leq \tilde \beta_{\gamma\epsilon}$. Furthermore, $\tilde \beta_{\gamma\epsilon}$ is dependent only on $\gamma$ and $A$, in particular, it does not depend on $B$ except through $\gamma$.
\end{lemma}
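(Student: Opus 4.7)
The plan is to adapt the trials-and-geometric-iteration argument of Lemma \ref{lem:basichitting} to the split space. By Lemma \ref{lem:ergodboundconverge}, it suffices to exhibit some $\beta^* > 0$ and a finite $K$, both depending only on $A$ and $\gamma$, such that $H^*(\beta^*) \leq K$; the continuity argument in Lemma \ref{lem:ergodboundconverge} then delivers $H^*(\beta) < 1 + \epsilon$ for $\beta$ sufficiently small.

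The starting observation is that $\check X$ decomposes as $(X, L)$, where $X$ is the projection onto $\X$ (a $B$-Markov chain) and $L$ is the $\{0,1\}$-valued layer, constant between jumps of $X$ and independently Bernoulli$(\gamma)$ at each jump; $\check Y = (Y, L^Y)$ is analogous and independent of $(X,L)$. Meeting $\{\check X_t = \check Y_t\}$ requires $X_t = Y_t$ \emph{and} $L_t = L^Y_t$ simultaneously. From the strict control assumption, the jump rate out of any $(x, 0) \in \X_0$ is at least $\gamma/(1-\gamma)$, and from Lemma \ref{ergoddiagbound} the jump rate out of any $(x, 1) \in \X_1$ is uniformly bounded below; in particular neither chain lingers in $\X_0$.

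The central step is a uniform single-trial hitting estimate: there should exist $t^* > 0$ and $p > 0$, depending only on $A$ and $\gamma$, with
\[
\inf_{\check x \in \check\X} P_{\check x}\bigl(\check X_{t^*} = (x_C, 1)\bigr) \geq p,
\]
where $x_C$ is the reference state from Lemma \ref{lem:basichitting}. I would prove this by constructing a favourable event on which, within time $t^*$: (i) $\check X$ lands in $\X_1$ after a bounded number of initial jumps; (ii) the ensuing $A$-transitions of the projection carry it to $x_C$; and (iii) the final jump's layer assignment is $1$. Ingredient (i) is controlled by the iid Bernoulli$(\gamma)$ layer structure combined with the lower bound $|g_{xx}| \geq \gamma/(1-\gamma)$ on $\X_0$-rates; (ii) is controlled by Lemma \ref{ergodlem1} applied to the $A$-chain's hitting time to $x_C$; (iii) has conditional probability $\gamma$. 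All constants depend only on $A$ and $\gamma$. Independence of $\check X$ and $\check Y$ then upgrades this to $P_{\check x\check y}(\check X_{t^*} = \check Y_{t^*} = (x_C, 1)) \geq p^2$, uniformly in $(\check x, \check y)$.

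With this estimate, the geometric-trials iteration parallels that of Lemma \ref{lem:basichitting}, using renewal times $\sigma_i$ at which $\check X$ returns to $(x_C, 1)$ after waiting periods of length at least $t^*$; the required uniform return-time estimate for $\check X$ to $(x_C, 1)$ follows from the single-trial bound together with Lemma \ref{ergodlem1}. The main obstacle is the single-trial estimate itself: the projection $X$ is a $B$-chain, and we cannot invoke uniform ergodicity of $B$, since that is precisely the conclusion of the theorem this lemma is designed to support. The resolution is to condition on a favourable layer pattern that forces the projection's transitions to be $A$-transitions on a sub-interval, so that $A$-ergodicity delivers the hitting; keeping every bound uniform in $B$ is where care is needed, but only the strict control constant $\gamma$ should enter the resulting constants.
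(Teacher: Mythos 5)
Your proposal is sound and reaches the conclusion by a genuinely different route from the paper. The paper performs a renewal analysis at the \emph{random} times $t_k$ at which both split chains first occupy $\X_1$ simultaneously: it splits $E[e^{\beta T}]$ by Cauchy--Schwarz into an mgf of $t_k$ and a probability $P(K_T=k)$, bounds the former through the layer-shift quantities $J_0,\dots,J_3$ and the rate estimates $V_0,V_1$ (which is where strict control enters, exactly as in your $|g_{xx}|\geq\gamma/(1-\gamma)$ observation), and bounds the latter by the ``lower exponential moment'' $q_\gamma$ of Lemma \ref{ergodlem2}, which plays the role of your $\gamma^m$ payment for keeping a chain in $\X_1$ during an entire excursion. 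You instead work on a deterministic grid $kt^*$ and prove a uniform single-trial minorization at the specific atom $(x_C,1)$, then run independent geometric trials; this avoids the $J_i$ bookkeeping entirely at the price of a much worse constant (both chains must coincide at one prescribed state, and you pay $\gamma^m$ for a full within-$\X_1$ excursion to $x_C$). Your single-trial estimate does go through uniformly in $B$: the entry time to $\X_1$ is stochastically dominated by a geometric($\gamma$) sum of exponentials of rate $\gamma/(1-\gamma)$, and the within-$\X_1$ excursion uses only $A$-dynamics. Two small points of care. First, to make step (ii) quantitative you must control the \emph{number of jumps} to reach $x_C$ (since the probability of remaining in $\X_1$ decays by a factor $\gamma$ per jump), so you need Lemma \ref{lem:jumpsampledergodic} or Corollary \ref{cor:exphittingdiscrete} in addition to Lemma \ref{ergodlem1}; and you must convert ``hits $(x_C,1)$ by time $t^*$'' into ``sits at $(x_C,1)$ at time $t^*$'' by paying the holding cost $e^{-|a_{x_Cx_C}|t^*}$, which is harmless since $A$ is bounded. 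Second, the projection $X$ of $\check X$ onto $\X$ is not by itself a $B$-Markov chain (its transition rates depend on the current layer; only the marginal against the split of a measure reproduces $B$), but your argument never actually uses this, only the pair $(X,L)$ with its i.i.d.\ Bernoulli($\gamma$) layer assignments, so the slip is cosmetic.
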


\begin{proof}
We will use a renewal approach, first considering the times when a jump occurs that results in both $\check X$ and $\check Y$ being in $\X_1$, when they weren't both in $\X_1$ previously. As the two chains are independent, this will occur when one chain is already in $\X_1$, and the other jumps into $\X_1$. Let $K_t$ denote the number of times $\check X$ or $\check Y$ enters $\X_1$ up to time $t$, when the other is already in $\X_1$, and $t_k$ denote the time of the $k$th such transition.

We can bound $H^*(\beta)$ by assuming that $\check X$ and $\check Y$ first meet in $\X_1$ (that is, we ignore any prior meetings in $\X_0$). If we knew $K_T=k$,  then we know that after $t_k$, neither copy leaves $\X_1$ before their final meeting. As transitions out of $\X_1$ are independent of where in $\X_1$ the chains move, their marginal transitions on this set will follow the rate matrix $\gamma A$. We write $E^{*j}$ for the expectation conditioned on not leaving $\X_j$. Therefore, conditioning on $\check X_{t_k}, \check Y_{t_k}$ and $K_T=k$ and rescaling time, we see that
\[E[e^{\beta (T-t_k)}|\check X_{t_k}, \check Y_{t_k}, K_T=k] = E^{*1}[e^{\beta (T-t_k)}|\check X_{t_k},\check Y_{t_k}] \leq G^*(\gamma^{-1} \beta),\]
where $G^*(\beta)$ be the function defined in Lemma \ref{lem:exponentialhitting}, that is, the supremum over starting states of the moment generating function of the first hitting time \emph{on the basic (unsplit) state space $\X$ with rate matrix $A$.}

Write $E_{\check x\check y}$ for the expectation conditional on $\check X_0=\check x, \check Y_0=\check y$, and similarly $P_{\check x\check y}$ for the conditional probability. We have the bound
\begin{equation}\label{eq:boundH1}
\begin{split} 
E_{\check x\check y}[e^{\beta T}]&=E_{\check x\check y}\Big[E_{\check x\check y}[e^{\beta T}|\check X_{t_k}, \check Y_{t_k}, K_T]\Big] \\
&\leq E_{\check x\check y}\Big[\sum_{k=0}^\infty E[e^{\beta T}|\check X_{t_k}, \check Y_{t_k}, K_T=k]P_{\check x\check y}(K_T=k|\check X_{t_k}, \check Y_{t_k})\Big] \\
&\leq \sum_{k=0}^\infty E_{\check x\check y}[e^{\beta t_k}E[e^{\beta (T-t_k)}|\check X_{t_k}, \check Y_{t_k}, K_T=k]P_{\check x\check y}(K_T=k|\check X_{t_k}, \check Y_{t_k})] \\
&\leq G^*(\gamma^{-1}\beta)\sum_{k=0}^\infty E_{\check x\check y}[e^{\beta t_k}P_{\check x\check y}(K_T=k|\check X_{t_k}, \check Y_{t_k})]\\
&\leq G^*(\gamma^{-1}\beta)\sum_{k=0}^\infty (E_{\check x\check y}[e^{2\beta t_k}]E_{\check x\check y}[P_{\check x\check y}(K_T=k| \check X_{t_k}, \check Y_{t_k})^2])^{1/2}\\
&\leq G^*(\gamma^{-1}\beta)\sum_{k=0}^\infty E_{\check x\check y}[e^{2\beta t_k}]^{1/2}P_{\check x\check y}(K_T=k)^{1/2} .
  \end{split}
\end{equation}
We will seek to bound the components of the above final sum.

\textbf{Probability component} First consider $P_{\check x\check y}(K_T=k)$. Exactly as in the proof of Lemma \ref{ergodlem2}, from points $x,y\in \X$, let $q(x,y;n)$ denote the probability of a pair of independent basic (unsplit) chains meeting in precisely $n$ jumps, under the rate matrix $\gamma A$, given starting values $X_0=x, Y_0=y$. Then a geometric trials argument yields, for $\check x=x\times\{1\},\check y=y\times\{1\} \in \X_1$,
\[P_{\check x\check y}(K_T=0) = \sum_n q(x,y;n)\gamma^n.\]
By another geometric trials argument, we see that
\begin{equation}\label{eq:pbound}
\begin{split}P_{\check x\check y}(K_T=k) &= E_{\check x\check y}\Big[\Big(\sum_n q(\check X_{t_k}, \check Y_{t_k};n) \gamma^n\Big) \prod_{0\leq i<k} \Big(1- \sum_n q(\check X_{t_i}, \check Y_{t_i};n) \gamma^n\Big)\Big]\\
   &\leq (1-\gamma) (1-q_\gamma)^k,
  \end{split}
\end{equation}
where $q_\gamma := \inf_{x,y} (\sum_n q(x, y;n)\gamma^n)>0$ by Lemma \ref{ergodlem2}.

\textbf{Expectation component} Now consider $E_{\check x\check y}[e^{\beta t_k}]$. To begin, consider
\[Q(\beta):=\sup_{\check x \check y}E_{\check x\check y}[e^{\beta t_1}],\]
the mgf of the first time $\check X$ and $\check Y$ will both be in $\X_1$, following at least one of them entering $\X_0$, maximised over starting conditions. We call a jump which results in one of $\check X, \check Y$ changing between $\X_0$ and $\X_1$ a `layer shift'. Note that the two chains jump at the same time with probability zero, and so the process which counts how many of them are in $\X_0$ is skip free.

Now to bound $Q(\beta)$, we need to bound two components -- the time $\sigma_1^0$ at which the first of the two chains enters $\X_0$, and the time $t_1-\sigma_1^0$ until both chains have returned to $\X_1$. To obtain such a bound, we must also consider the possibility that, before $t_1$, both chains will enter $\X_0$. Define the stopping times
\[\begin{split}\sigma_2^i &= \inf\{t>\sigma_1^{i-1}:\check X_t,\check Y_t \in \X_0\},\\
   \sigma_1^i &= \inf\{t>\sigma_2^i: \check X_t \in \X_1 \text{ or }\check Y_t \in \X_1\},
  \end{split}
\]
so that $\sigma_i^j$ refers to the first time that precisely $i$ of the chains are in $\X_0$, given that they have both previously entered $\X_0$ precisely $j$ times.

Let
\[\begin{split}
   J_0(\beta)&:=\sup_{\check x,\check y\in\X_1}E_{\check x\check y}[e^{\beta \sigma_1^0}],\\
J_1(\beta)&:=\sup_{\check x \in\X_0,\check y\in\X_1}E_{\check x\check y}[e^{\beta \sigma_2^{1}}|\sigma_2^1<t_1],\\
J_2(\beta)&:=\sup_{\check x,\check y \in\X_0}E_{\check x\check y}[e^{\beta \sigma_1^1}],\\
J_3(\beta)&:=\sup_{\check x\in\X_0,\check y\in\X_1}E_{\check x\check y}[e^{\beta t_1}|t_1<\sigma_2^1],\\
  \end{split}
    \]
and note that the choice of whether $\check x$ or $\check y$ is in $\X_0$ or $\X_1$ is arbitrary in the definition of $J_1$ and $J_3$. This gives $J_0$ as the maximal mgf of the first time one chain enters $\X_0$; $J_1$ as the maximal mgf of the first time both chains are in $\X_0$ given one of them is now, and given this occurs before they both enter $\X_1$; $J_2$ as the first time one chain enters $\X_1$ given they are both in $\X_0$; and $J_3$ as the maximal mgf of the first time both chains are in $\X_1$ given one of them is now, and given this occurs before they both enter $\X_0$.

We can now construct another geometric trials argument. If one of $\check X$, $\check Y$ is in $\X_0$, then the next layer shift will result either in them both being in $\X_0$ or in $\X_1$. The probability of both of them moving to $\X_1$ in the next layer shift depends on the relative rates of transition, however looking at the relative probabilities of transitions we find
\[\begin{split}P(\sigma_2^1>t_1) &\geq \frac{\inf\{|b_{xx}-\gamma a_{xx}|\}\gamma}{\inf\{|b_{xx}-\gamma a_{xx}|\}\gamma + \sup\{|a_{xx}|\}(1-\gamma)}\\
   &\geq \frac{\gamma^2}{\gamma^2 + \sup\{|a_{xx}|\}(1-\gamma)}:=p
  \end{split}
 \]
and $p>0$ as $A$ is bounded. Note that $p$ does not depend on $B$. Let $D$ count the number of times both $\check X$ and $\check Y$ enter $\X_0$ before both returning to $\X_1$.  We have
\begin{equation}\label{eq:Jestimate1}\begin{split}
\sup_{\check x, \check y}E_{\check x,\check y}[e^{\beta(t_1-\sigma_1^0)}]&= \sup_{\check x, \check y}E_{\check x,\check y}[e^{\beta(t_1-\sigma_1^{D})+\beta\sum_{d=0}^{D-1}(\sigma_1^{d+1}-\sigma_1^d)}]\\
&= \sup_{\check x, \check y}E_{\check x,\check y}\Big[E[e^{\beta(t_1-\sigma_1^{D})}|\F_{\sigma_1^D},D] e^{\beta \sum_{d=0}^{D-1}(\sigma_1^{d+1}-\sigma_1^d)}\Big]\\
&\leq J_3(\beta)\sup_{\check x, \check y}E_{\check x,\check y}\Big[ e^{\beta\sum_{d=0}^{D-1}(\sigma_1^{d+1}-\sigma_1^d)}\Big].
\end{split}
\end{equation}

As $\sigma_i^D$ is not a stopping time (due to the presence of $D$), we now use a renewal argument.
\[\begin{split}
F&:= \sup_{\check x, \check y}E_{\check x,\check y}\Big[ e^{\beta\sum_{d=0}^{D-1}(\sigma_1^{d+1}-\sigma_1^d)}\Big]\\
&=\sup_{\check x, \check y}E_{\check x,\check y}\Big[ E[e^{\beta\sum_{d=1}^{D-1}(\sigma_1^{d+1}-\sigma_1^d)}|\F_{\sigma_1^1}, D>0]I_{\{D>0\}} e^{\beta(\sigma_1^{1}-\sigma_1^0)}\Big]\\
&\leq\sup_{\check x, \check y}E_{\check x,\check y}\Big[ F I_{\{D>0\}} e^{\beta(\sigma_1^{1}-\sigma_1^0)}\Big]\\
&= F \sup_{\check x, \check y}E_{\check x,\check y}\Big[I_{\{D>0\}} E[e^{\beta(\sigma_1^{1}-\sigma_2^1)}|\F_{\sigma_2^1}] e^{\beta(\sigma_2^{1}-\sigma_1^0)}\Big]\\
&\leq F J_2(\beta)\sup_{\check x, \check y}E_{\check x,\check y}\Big[I_{\{D>0\}} E_{\check x,\check y}[e^{\beta(\sigma_2^{1}-\sigma_1^0)}|D>0]\Big]\\
&\leq F J_2(\beta) J_1(\beta) \sup_{\check x, \check y}E_{\check x,\check y}[I_{\{D>0\}}]\\
&\leq F J_2(\beta) J_1(\beta) (1-p).\\
\end{split}\]
Hence provided $J_2(\beta) J_1(\beta) (1-p)<1$, we have
\begin{equation}\label{eq:Jestimate2}\sup_{\check x, \check y}E_{\check x,\check y}[e^{\beta(t_1-\sigma_1^0)}] \leq J_3(\beta)F \leq \frac{J_3(\beta)}{1- J_1(\beta) J_2(\beta) (1-p)}.\end{equation}


By Lemma \ref{lem:ergodboundconverge}, if we can uniformly bound $J_1$ and $J_2$, then by reducing $\beta$ we have a uniform bound arbitrarily close to $1$. Hence for $\beta$ sufficiently small this estimate will hold, and we can combine (\ref{eq:Jestimate1}) and (\ref{eq:Jestimate2}) to give
\begin{equation}\label{eq:ebound1}
 \sup_{\check x, \check y}E_{\check x\check y}[e^{\beta t_1}] \leq  \frac{J_0(\beta)J_3(\beta)}{1- J_1(\beta) J_2(\beta) (1-p)}.
\end{equation}
Therefore, to uniformly bound the mgf of the first return time of both chains to $\X_1$, it is enough to find uniform bounds on $J_i$ for each $i$.

\textbf{Bounding $J_i$.}
Let $\tau_i$ denote the time of the $i$th jump of the chain $\check X$, and for $j\in\{0,1\}$ let
\[R_j(i, \beta) = E\big[e^{\beta \tau_i}|\check X_t\in \X_j \text{ for all } t\in[0,\tau_i[\,\big]=: E^{*j}[e^{\beta \tau_i}],\]
so $R_j(i,\cdot)$ is the mgf of the $i$th transition of the chain, given it has not left $\X_j$. Then for $i\geq 2$,
\[\begin{split}R_j(i,\beta) &=E^{*j}[e^{\beta \tau_i}] = E^{*j}[e^{\tau_{i-1}}E^{*j}[e^{\beta \tau_i-\tau_{i-1}}|X_{\tau_{i-1}}]]\\
&\leq R_j(i-1,\beta) V_j(\beta)
  \end{split}
\]
where $V_j(\beta) = \sup_{\check x\in\X_j} E^{*j}[e^{\beta \tau_1}|\check X_0=\check x]$ is the maximum of the mgfs of the next transition. By definition, looking at the first jump we have $R_j(1,\beta)\leq V_j(\beta)$, and so by recursion, $R_j(i,\beta) \leq (V_j(\beta))^i$.

To bound $J_0$, we can instead bound the first time that $\check X$ jumps to $\X_0$, ignoring any previous jumps of $\check Y$. By a geometric trials argument, we see that
\begin{equation}\label{eq:J0}
 J_0(\beta) = \sum_{i=1}^\infty \gamma(1-\gamma)^{i-1} R_0(i,\beta) \leq \frac{\gamma V_0(\beta)}{1-(1-\gamma)V_0(\beta)}.
\end{equation}

To bound $J_1$, we need only bound the time until $\check Y$ enters $\X_0$, as the condition $\sigma_2^1<t_1$ provides an upper bound on $\sigma_2^1$, so the conditional expectation is less than the unconditional expectation. Hence, by the same geometric trials argument,
\begin{equation}\label{eq:J1}
 J_1(\beta) \leq \frac{\gamma V_0(\beta)}{1-(1-\gamma)V_0(\beta)}.
\end{equation}

To bound $J_2$, as with $J_0$, we can instead bound the first time that $\check X$ jumps to $\X_1$, ignoring any previous jumps of $\check Y$. Hence
\[J_2(\beta) = \sum_{i=1}^\infty (1-\gamma)\gamma^{i-1} R_0(i,\beta) \leq \frac{(1-\gamma) V_1(\beta)}{1-\gamma V_1(\beta)}.
\]

To bound $J_3$, as with $J_1$, we see that the condition $t_1<\sigma_2^1$ provides an upper bound on $t_1$, and so it is enough to bound the time until $\check X$ enters $\X_1$. Hence
\[J_3(\beta)\leq \frac{(1-\gamma) V_1(\beta)}{1-\gamma V_1(\beta)}.\]
Therefore to find bounds on $J_i$ for each $i$, we need only bound the right hand side of (\ref{eq:J0}) and (\ref{eq:J1}).

As jumps of Markov chains are exponentially distributed, and conditional on not leaving $\X_1$ the chains move following rate matrix $\gamma A$, we can easily see that, as $\gamma\leq \inf_x |a_{xx}|$,
\[V_1(\beta) = \sup_x \Big(1-\frac{\beta}{\gamma|a_{xx}|}\Big)^{-1} \leq\Big(1-\frac{\beta}{\gamma^2}\Big)^{-1}.\]
 As $B$ is strictly controlled by $A$, and conditionally on not leaving $\X_0$ the chains move following rate matrix $B-\gamma A$, we also have
\[V_0(\beta) = \sup_x\Big(1-\frac{\beta}{|b_{xx}-\gamma a_{xx}|}\Big)^{-1} \leq \Big(1-\frac{\beta}{\gamma}\Big)^{-1}.\]
So the right hand side of (\ref{eq:J0}) and (\ref{eq:J1}) respectively can be bounded by
\[\frac{\gamma V_0(\beta)}{1-(1-\gamma)V_0(\beta)} \leq 1+\frac{\beta}{\gamma^3-\beta}, \qquad \frac{\gamma V_0(\beta)}{1-(1-\gamma)V_0(\beta)} \leq 1+ \frac{\beta}{\gamma(1-\gamma) -\beta},\]
which are uniformly bounded for $\beta< \frac{\gamma}{2}(\gamma^2\wedge(1-\gamma))$.

Therefore, for $\beta$ sufficiently small, there is a uniform bound on (\ref{eq:ebound1}), which depends only on $\gamma$. Therefore, by Lemma \ref{lem:ergodboundconverge}, for any $\epsilon>0$,  there exists $\beta>0$ such that
\begin{equation}\label{eq:Qbound}
 Q(\beta)=\sup_{\check x, \check y} E_{\check x \check y}[e^{\beta t_1}]<1+\epsilon,
\end{equation}
and this $\beta$ depends only on $\gamma$ and $A$.

\textbf{Putting the pieces together.}
Now it is easy to see that, with $t_0:=0$,
\[E_{\check x\check y}[e^{2\beta t_k}]^{1/2} \leq E_{\check x\check y}[E[e^{2\beta \sum_{j=1}^k (t_j-t_{j-1})}|\F_{t_{j-1}}]]^{1/2} \leq E_{\check x\check y}[e^{2\beta t_1}]^{k/2}.\]
Hence, returning to (\ref{eq:boundH1}), we have
\[\begin{split}\sup_{\check x, \check y}E_{\check x\check y}[e^{\beta T}] &\leq G^*(\gamma^{-1}\beta)\sum_{k=0}^\infty \sup_{\check x, \check y}E_{\check x\check y}[e^{2\beta t_k}]^{1/2}\sup_{\check x, \check y}P_{\check x\check y}(K_T=k)^{1/2}\\
   &\leq G^*(\gamma^{-1}\beta)\sum_{k=0}^\infty (Q(2\beta))^{k/2}(1-\gamma)^{1/2} (1-q_\gamma)^{k/2}\\
&= G^*(\gamma^{-1}\beta)\frac{(1-\gamma)^{1/2}}{1-\sqrt{(1-q_\gamma)Q(2\beta)}}.
  \end{split}
\]
This is bounded whenever $Q(2\beta)<(1-q_\gamma)^{-1/2}$ and $G^*(\gamma^{-1}\beta)$ is bounded. We have just shown in (\ref{eq:Qbound}) that this can be achieved with a uniform choice of $\beta$. Hence, there exists a uniform choice of $\beta^*>0$ such that $\sup_{\check x, \check y}E_{\check x\check y}[e^{\beta T}]<K$ for all $\beta<\beta^*$. By Lemma \ref{lem:ergodboundconverge}, for any $\epsilon>0$ we can hence find $\tilde \beta_{\gamma\epsilon}$ such that
\[\sup_{\check x, \check y}E_{\check x\check y}[e^{\beta T}]\leq 1+\epsilon \qquad \text{ for all }\beta<\tilde\beta_{\gamma\epsilon}.\]
To conclude, we note that this choice of $\tilde \beta_{\gamma\epsilon}$ depends only on $\gamma$ and $A$.
\end{proof}

\begin{corollary}\label{cor:hittingestimate}
 Let $X,Y$ be two independent copies of the Markov chain on $\X$ with rate matrix $B$. Let $T$ be the first meeting time of these chains. Then for any $\epsilon>0$, there exists $\beta>0$ such that  $\sup_{x,y} E[e^{\beta T}|X_0=x, Y_0=y]\leq 1+\epsilon$.
\end{corollary}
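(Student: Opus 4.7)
The plan is to derive this corollary as a direct consequence of Lemma~\ref{lem:exponentialhitting} by coupling the two chains $X,Y$ on $\X$ with a pair of independent split chains $\check X,\check Y$ on $\check\X$ and projecting back down. The key observation, already noted just after Definition~\ref{defn:splitting}, is that the split chain has marginal rate matrix $B$ on $\X$: for any measure $q$ on $\X$ and its splitting $\check q$ on $\check\X$, $\int \B\, d\check q = \int B\, dq$. Hence the projection to $\X$ of an independent pair $(\check X,\check Y)$ starting from any lifts $(\check x,\check y)$ of $(x,y)$ has the same law as the original independent pair $(X,Y)$ started at $(x,y)$.

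Given this coupling, I would first note the pathwise inequality $T\le \check T$, where $\check T=\inf\{t:\check X_t=\check Y_t\}$: whenever the split chains coincide in $\check\X$, their projections coincide in $\X$, so the projections must have met at some earlier (or equal) time. Consequently, for any $\beta>0$,
\[
E\bigl[e^{\beta T}\,\big|\,X_0=x,\,Y_0=y\bigr]\ \le\ E\bigl[e^{\beta \check T}\,\big|\,\check X_0=\check x,\,\check Y_0=\check y\bigr],
\]
for any choice of lifts $(\check x,\check y)$ of $(x,y)$. Taking a supremum over $(x,y)\in\X\times\X$ on the left (which, since every $(x,y)$ admits some lift, is bounded by a supremum over $(\check x,\check y)\in\check\X\times\check\X$ on the right) gives
\[
\sup_{x,y}E\bigl[e^{\beta T}\,\big|\,X_0=x,\,Y_0=y\bigr]\ \le\ H^*(\beta).
\]

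To finish, I would invoke Lemma~\ref{lem:exponentialhitting}: under the standing strict control hypothesis with constant $\gamma$, for every $\epsilon>0$ there is $\tilde\beta_{\gamma\epsilon}>0$ such that $H^*(\beta)\le 1+\epsilon$ whenever $\beta\le\tilde\beta_{\gamma\epsilon}$. Choosing any such $\beta$ yields the desired bound. There is no substantive obstacle here; the only thing to be careful about is that the coupling is genuine (the projection of $\check X$ is indeed a copy of the Markov chain with rate matrix $B$, irrespective of which lift we start from), which is immediate from the marginal identity above.
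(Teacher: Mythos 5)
Your proposal is correct and follows essentially the same route as the paper, which likewise derives the corollary by viewing $X$ and $Y$ as the marginal (projected) chains of the split chains $\check X,\check Y$ and invoking Lemma~\ref{lem:exponentialhitting}; your write-up merely makes explicit the pathwise inequality $T\le\check T$ and the marginal identity that the paper leaves implicit.
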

\begin{proof}
 This follows from Lemma \ref{lem:exponentialhitting}, as the chain $X$ is simply the marginal chain of $\check X$ ignoring the splitting, and similarly for $Y$.
\end{proof}

Given this result, we can now prove Theorem \ref{thm:strictcontrolergodicity}.

\begin{proof}[Proof of Theorem \ref{thm:strictcontrolergodicity}]
For any $\mu,\nu\in{\cal M}$, consider two copies $X$, $Y$ of the chain with rate matrix $B$, where $X_0\sim \mu$, $Y_0\sim\nu$. Recall we denote by $P_t\mu$ the law of $X_t$, and $\|f\|_{TV}=\sum_x|f(x)|$ is a norm on $\cal M$.

Consider the coupled process
\[\tilde Y_t=\begin{cases} Y_t & t<T\\ X_t & t\geq T.\end{cases}\]
It is easy to see that $\tilde Y_t$ and $Y_t$ have the same law. However, this implies,
\[\|P_t\mu-P_t\nu\|_{TV}=\sum_z |P(X_t=z) - P(\tilde Y_t=z)|\leq P(T>t).\]
By Corollary \ref{cor:hittingestimate} and Chernoff's inequality, for any $\epsilon>0$ there exists $\tilde\beta_{\gamma\epsilon}>0$ dependent only on $\gamma$ and $A$  such that
\[\begin{split} P(T>t)&\leq E[e^{\tilde\beta_{\gamma\epsilon} T}|X_0\sim \mu, Y_0\sim \nu] e^{-\tilde\beta_{\gamma\epsilon} t} \\
&\leq \sup_{x,y}E[e^{\tilde\beta_{\gamma\epsilon} T}|X_0=x, Y_0=y] e^{-\tilde\beta_{\gamma\epsilon} t}\\
  &\leq (1+\epsilon) e^{-\tilde\beta_{\gamma\epsilon} t}.
  \end{split}
\]
Therefore,
\begin{equation}\label{eq:pbound2}\|P_t\mu-P_t\nu\|_{TV}\leq (1+\epsilon) e^{-\tilde\beta_{\gamma\epsilon} t}.\end{equation}
Replacing $\nu$ with $P_{s-t}\mu$ in (\ref{eq:pbound}), we see that for any $s>t$,
\[\|P_t\mu-P_s\mu\|_{TV} \leq (1+\epsilon) e^{-\tilde\beta_{\gamma\epsilon} t}\]
so $P_t\mu$ is a Cauchy sequence in $\cal M$. Hence it has a limit, $P_t\mu\to\pi$, which is invariant under $P_t$. Taking $\nu=\pi$ in (\ref{eq:pbound2}), we then see
\[\|P_t\mu-\pi\|_{TV}\leq (1+\epsilon) e^{-\tilde\beta_{\gamma\epsilon} t},\]
and so the desired convergence can be guaranteed, is independent of $\mu$, and occurs uniformly in $B$ given $\gamma$ and $A$.
\end{proof}

\section{Ergodic BSDEs}\label{sec:EBSDEs}

As we now have estimates for the uniform ergodicity of our Markov chain, we shall use these to prove the existence of solutions to Ergodic BSDEs. The techniques used here are modifications of those in \cite{Debussche2011} and \cite{Fuhrman2009}. For ease of reference, we collect the various assumptions needed in one place.

\begin{assumption}
 Let $X$ be a uniformly ergodic, time homogeneous Markov chain with bounded transition rates. Let $f:\X\times\bR^N\to\bR$ be a function satisfying the conditions of Theorem \ref{thm:discountedBSDEexist} (uniformly Lipschitz in the second component under $\|\cdot\|_{M_t}$, balanced and $\psi(\cdot, 0)$ uniformly bounded) which is also strictly balanced.
\end{assumption}

The following lemma allows us to simply connect our analysis of uniform ergodicity to the theory of BSDEs, by means of Girsanov transformations.
\begin{lemma}\label{lem:PerturbErgodic}
 Let $Z,Z'\in\bR^N$ be any two vectors, defined up to equivalence $\sim_M$, and let $f$ be our strictly balanced function. Let $\bP^x$ denote the measure under which $X$ is a Markov chain with rate matrix $A$ and initial state $x$. For each $T>0$, define the measure with density
\[\frac{d\bQ^{x,T}}{d\bP^x} = \E\Big(\int_{]0,\cdot \wedge T]} \frac{f(X_{u-}, Z) - f(X_{u-},Z')}{\|Z-Z'\|^2_{M_u}}(Z-Z')^*dM_u\Big).
\]
Then, on the interval $[0,T]$, $X$ is a time-homogeneous Markov chain under $\bQ^{x,T}$, and the corresponding rate matrix $B$ does not depend on $T$. Furthermore, $B$ is such that the Markov chain with rate matrix $B$ is uniformly ergodic, with constants $R$ and $\rho$ which do not depend on $Z$ or $Z'$.
\end{lemma}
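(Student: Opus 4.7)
The plan is to compute the rate matrix of $X$ under $\bQ^{x,T}$ explicitly via Girsanov's theorem, verify that this new rate matrix is strictly controlled by $A$ with a constant independent of $Z,Z'$, and then invoke Theorem \ref{thm:strictcontrolergodicity}.

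First I would write $H_u := \bigl(f(X_{u-}, Z) - f(X_{u-},Z')\bigr)\|Z-Z'\|_{M_u}^{-2}(Z-Z')$, so that the density is $\E\bigl(\int_0^{\cdot\wedge T} H_u^* dM_u\bigr)$; the fact that this defines a probability measure already follows from Lemma \ref{lem:girsanovvalid} (applied with trivial $y$-dependence). Since the jump of this Dol\'eans--Dade exponential at a transition $e_i \to e_j$ of $X$ is $H_u^*(e_j-e_i)$, standard Girsanov theory for pure jump processes yields that under $\bQ^{x,T}$ the rate of transition from $e_i$ to $e_j$ is
\[
B_{ji} \;=\; A_{ji}\biggl(1 + \frac{f(e_i, Z) - f(e_i, Z')}{\|Z-Z'\|_{M_u}^2}(Z-Z')^*(e_j - e_i)\biggr)\bigg|_{X_{u-}=e_i}.
\]
Because $f$ is autonomous and the right-hand side depends only on $(i,j)$ (and on $Z,Z'$), the matrix $B$ is well-defined and time-independent; this immediately gives the time-homogeneity of $X$ under $\bQ^{x,T}$ and the independence of $B$ from $T$.

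Next I would check that $B$ is strictly controlled by $A$, with a constant depending only on $A$ and the strict-balancedness constant of $f$. Strict balancedness provides a $\gamma_f>0$ such that $1 + H^*(e_j-e_i) > \gamma_f$ for every admissible transition; hence $B_{ji} \geq \gamma_f A_{ji}$ for all $i\neq j$, and summing over $j$ gives $|b_{ii}| \geq \gamma_f |a_{ii}|$. Let $a_*>0$ be the uniform lower bound on $|a_{xx}|$ furnished by Lemma \ref{ergoddiagbound}, and set $\gamma := \gamma_f a_*/(a_* + 1)$. Then for every $i$,
\[
(B - \gamma A)_{ji} \;\geq\; (\gamma_f - \gamma) A_{ji} \;\geq\; 0 \text{ for } j\neq i, \qquad (B - \gamma A)_{ii} \;\leq\; (\gamma - \gamma_f)|a_{ii}| \;\leq\; -\gamma,
\]
where the last inequality uses $|a_{ii}|\geq a_*$ and the choice of $\gamma$. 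So $B$ is strictly controlled by $A$ with constant $\gamma$, and crucially $\gamma$ depends only on $\gamma_f$ and $A$, not on $Z$ or $Z'$. Theorem \ref{thm:strictcontrolergodicity} then applies and yields uniform ergodicity of $X$ under $\bQ^{x,T}$ with constants $R,\rho$ depending only on $A$ and $\gamma$, hence independent of $Z,Z'$.

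The hardest part will be the diagonal bound in the second step: strict balancedness directly controls only off-diagonal entries of $B-\gamma A$, so producing a single uniform constant $\gamma$ which simultaneously witnesses nonnegativity off the diagonal \emph{and} the $-\gamma$ bound on the diagonal part of Definition \ref{control} requires combining strict balancedness of $f$ with the uniform positive lower bound on $|a_{xx}|$ from Lemma \ref{ergoddiagbound}. Everything else is essentially book-keeping around the Girsanov transformation.
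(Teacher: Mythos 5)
Your proposal is correct and follows essentially the same route as the paper: compute the perturbed rate matrix $B$ via Girsanov (your formula $B_{ji}=A_{ji}(1+H^*(e_j-e_i))$ agrees with the paper's $B\tilde x = A\tilde x + \frac{f(\tilde x,Z)-f(\tilde x,Z')}{\|Z-Z'\|^2_{M}}\psi^{\tilde x}(Z-Z')$ off the diagonal), use strict balancedness to get $B_{ji}\geq\gamma_f A_{ji}$ and hence control, combine with the uniform lower bound on $|a_{xx}|$ from Lemma \ref{ergoddiagbound} to get \emph{strict} control with a constant independent of $Z,Z'$, and invoke Theorem \ref{thm:strictcontrolergodicity}. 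Your explicit constant $\gamma=\gamma_f a_*/(a_*+1)$ differs cosmetically from the paper's $\gamma^*<\gamma_f(\inf_x|a_{xx}|\wedge 1)/2$, but both depend only on $A$ and the strict-balancedness constant, which is the point.
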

\begin{proof}
By Lemma \ref{lem:girsanovvalid}, $\bQ^{x,T}$ is a probability measure. To show that $X$ is a time homogeneous Markov chain under $\bQ^{x,T}$, it is enough to show that an equation of the form of (\ref{ref:DMrepMarkovChain}) must hold, that is, for $t<T$ we have a rate matrix $B$ such that
\begin{equation}\label{eq:BMreptemp}
 X_t = X_0 + \int_{]0,t]} B X_{u-} du + \bQ^{x,T}\text{-martingale}
\end{equation}

However, we know that for $t<T$,
\[\begin{split}
X_t &= X_0 + \int_{]0,t]} A X_{u-} du + M_t\\
& = X_0 + \int_{]0,t]} A X_{u-} du + \int_{]0,t\wedge T]}\bigg(\frac{f(X_{u-}, Z) - f(X_{u-},Z')}{\|Z-Z'\|^2_{M_u}}\psi_u (Z-Z')\bigg) du+ \tilde M^T_t
  \end{split}
\]
and so, by uniqueness of the Doob--Meyer decomposition, we can uniquely define a matrix $B$ which satisfies (\ref{eq:BMreptemp}), through the equation
\[B {\tilde x} = A {\tilde x} + \frac{f({\tilde x}, Z) - f({\tilde x},Z')}{\|Z-Z'\|^2_{M_u}}\psi_u^{\tilde x}(Z-Z')\qquad \text{for }{\tilde x}\in\X\]
(recalling that ${\tilde x}\in\X$ is a basis vector of $\bR^N$). We know that
\[\bone^*B {\tilde x} = \bone^* A {\tilde x} + \frac{f({\tilde x}, Z) - f({\tilde x},Z')}{\|Z-Z'\|^2_{M_u}}\bone^*\psi_u^{\tilde x}(Z-Z') = 0+0,\]
so to check that $B$ is a valid rate matrix, we need to check that the only negative element can occur on the diagonal. As $f$ is balanced, we know that for any $i$ with $e_i^*A {\tilde x}>0$,
\[\frac{f({\tilde x}, Z) - f({\tilde x},Z')}{\|Z-Z'\|^2_{M_u}} ((Z-Z')^* (e_i-{\tilde x})) >-1\]
as $Z$ and $Z'$ are only defined up to equivalence $\sim_M$, hence at most up to the addition of a constant, we can assume without loss of generality that $(Z-Z')^* e_i = 0$, and so
\[\frac{f({\tilde x}, Z) - f({\tilde x},Z')}{\|Z-Z'\|^2_{M_u}} ({\tilde x}^* (Z-Z'))<1.\]
On the other hand, for this choice of representative $Z-Z'$, we then have
\[e_i^*\psi_u^{\tilde x} (Z-Z') = - (e_i^* A {\tilde x}) ({\tilde x}^* (Z-Z'))\]
and so,
\[e_i^* B {\tilde x}  = (e_i^*A {\tilde x})\bigg(1- \frac{f({\tilde x}, Z) - f({\tilde x},Z')}{\|Z-Z'\|^2_{M_u}}({\tilde x}^* (Z-Z'))\bigg)>0.\]
The fact that $B$ and $A$ also have the same pattern of zeros can be checked in a similar way, as $f$ is balanced. Therefore, $X$ is a Markov chain under $\bQ^{x,T}$ with rate matrix $B$, for all $T$.

As we also know $f$ is strictly balanced, we have exactly
\[1- \frac{f({\tilde x}, Z) - f({\tilde x},Z')}{\|Z-Z'\|^2_{M_u}}({\tilde x}^* (Z-Z')) \geq \gamma\]
for some $\gamma>0$, and hence $e_i^*(B-\gamma A){\tilde x}\geq0$ for all $e_i\neq {\tilde x}$. Hence (as the columns all sum to zero) $B-\gamma A$ is also a rate matrix, and hence $B$ is controlled by $A$. We also have
\[{\tilde x}^* B {\tilde x} = - \sum_{i:e_i\neq {\tilde x}} e_i^* B X_{u-} \leq  - \gamma \sum_{i:e_i\neq {\tilde x}} e_i^* A {\tilde x} = \gamma {\tilde x}^* A {\tilde x},\]
so
\[({\tilde x}^* B {\tilde x}) - \frac{\gamma}{2} ({\tilde x}^* A {\tilde x}) \leq \frac{\gamma}{2} ({\tilde x}^* A {\tilde x}).\]
As $X$ is uniformly ergodic under the initial measure, ${\tilde x}^* A {\tilde x}$ is bounded away from zero, by Lemma \ref{ergoddiagbound}. Therefore, for $\gamma^* < \gamma\big(\frac{\inf_{\tilde x}|{\tilde x}^* A {\tilde x}|\wedge 1}{2}\big) $, we see that $B$ is strictly controlled by $A$ with constant $\gamma^*$, independent of $Z$ and $Z'$.

As we know that $B$ is strictly controlled by $A$ with a fixed constant $\gamma^*$, we can apply Theorem \ref{thm:strictcontrolergodicity} to show that there exist constants $R, \rho$, independent of $Z$ and $Z'$, such that $X$ is uniformly ergodic under $\bQ^{x, T}$ with constants $R$ and $\rho$.
\end{proof}

We now show that, for some sequence $\alpha_n$, the solutions to the discounted BSDEs converge in an appropriate sense.
\begin{lemma}\label{lem:Discconverg}
Let $(Y^\alpha, Z^\alpha)$ be the unique bounded adapted solution to the discounted BSDE
\[Y^\alpha_T=Y^\alpha_t - \int_{]t,T]} (-\alpha Y^\alpha_{u-} + f(X_{u-}, Z^\alpha_u)) du + \int_{]t,T]} (Z^\alpha_u)^*dM_u, \qquad t\leq T<\infty,\]
and let $x_0\in\X$ be an arbitrary state. By Corollary \ref{cor:markoviansolutions}, we know there exists a function $v^\alpha:\X\to\bR$ such that $Y^\alpha_t=v^\alpha(X_t)$. Then there exists a bound $C'<\infty$ such that
\[|v^{\alpha}(x)-v^{\alpha}(x_0)|<C', \qquad \alpha |v^\alpha(x)| < C'\]
uniformly in $x$ and $\alpha$. Hence there exists a sequence $\alpha_n \to 0$ such that
\[(v^{\alpha_n}(x)-v^{\alpha_n}(x_0)) \to v(x) \quad \text{ and } \quad \alpha_n v^{\alpha_n}(x) \to \lambda \qquad \text{ for all }x\in\X,\]
for some bounded function $v:\X\to\bR$ and some $\lambda\in\bR$.
\end{lemma}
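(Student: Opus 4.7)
The plan is to linearise the discounted BSDE by a Girsanov change of measure so that the only state-dependence left comes from the underlying Markov chain, and then invoke the uniform ergodicity results of Section \ref{sec:ergodic} to deduce the oscillation bound on $v^\alpha$. The compactness arguments at the end are then immediate.

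First, I would use Corollary \ref{cor:markoviansolutions} to identify $Y^\alpha_t = v^\alpha(X_t)$ and, up to $\sim_M$-equivalence, choose the representative of $Z^\alpha$ to be the deterministic vector with $i$th entry $v^\alpha(e_i)$. Applying Lemma \ref{lem:girsanovvalid} with this $Z^\alpha$ and $Z'=0$, define the probability measure $\bQ^T$ by
\[
\frac{d\bQ^T}{d\bP^x} = \E\Big(\int_{]0,\cdot \wedge T]} \frac{f(X_{u-}, Z^\alpha) - f(X_{u-}, 0)}{\|Z^\alpha\|^2_{M_u}} (Z^\alpha)^* dM_u\Big).
\]
Under $\bQ^T$ the driver of the BSDE collapses to $-\alpha Y^\alpha_{t-} + f(X_{t-}, 0)$, i.e.\ linear in $Y^\alpha$ with a bounded inhomogeneous term, and by Lemma \ref{lem:PerturbErgodic} the chain $X$ is under $\bQ^T$ a time-homogeneous Markov chain with rate matrix $B$ independent of $T$ and of the initial state, strictly controlled by $A$ with a constant $\gamma^*$ independent of $\alpha$. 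Theorem \ref{thm:strictcontrolergodicity} then guarantees uniform ergodicity of $B$ with constants $R,\rho>0$ that are also independent of $\alpha$.

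Next, apply It\^o's formula to $e^{-\alpha t}Y^\alpha_t$ under $\bQ^T$ and take expectations conditional on $X_0=x$ to obtain the Feynman--Kac identity
\[
v^\alpha(x) = E^{\bQ^T, x}[e^{-\alpha T} v^\alpha(X_T)] + E^{\bQ^T, x}\Big[\int_0^T e^{-\alpha u} f(X_{u-}, 0)\, du\Big].
\]
Since $B$ is independent of the initial state, the same identity holds with $x$ replaced by $x_0$ and the same measure $\bQ^T$. Subtracting,
\[
v^\alpha(x)-v^\alpha(x_0) = e^{-\alpha T}\Delta_T + \int_0^T e^{-\alpha u}\Delta_u\, du,
\]
where $\Delta_T$ and $\Delta_u$ denote the corresponding differences of expectations of $v^\alpha(X_T)$, respectively $f(X_{u-},0)$, under the two starting points. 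Using $\|P^B_t\delta_x - P^B_t\delta_{x_0}\|_{TV}\leq 2Re^{-\rho t}$ together with $|v^\alpha|\leq C/\alpha$ (from Theorem \ref{thm:discountedBSDEexist}) and $|f(\cdot,0)|\leq C$, the first term is bounded by $(2CR/\alpha)\,e^{-(\alpha+\rho)T}\to 0$ as $T\to\infty$ for fixed $\alpha$, while the integral term is bounded by $\int_0^\infty 2CR\,e^{-\rho u}\,du = 2CR/\rho$, uniformly in $\alpha$ and $x$. This gives the desired uniform bound $|v^\alpha(x)-v^\alpha(x_0)|\leq 2CR/\rho$; combined with $\alpha|v^\alpha(x)|\leq C$ from Theorem \ref{thm:discountedBSDEexist}, both uniform estimates follow with $C'=\max\{C,\,2CR/\rho\}$.

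Finally, extraction of the subsequence is standard: pick any $\alpha_n\to 0$ along which $\alpha_n v^{\alpha_n}(x_0)\to\lambda$ (possible by Bolzano--Weierstrass since $\alpha v^\alpha(x_0)\in[-C,C]$); then by a diagonal argument over the countable set $\X$, refine the sequence so that $v^{\alpha_n}(x)-v^{\alpha_n}(x_0)\to v(x)$ for every $x\in\X$, which in turn forces $\alpha_n v^{\alpha_n}(x)\to\lambda$ for every $x$ since $\alpha_n(v^{\alpha_n}(x)-v^{\alpha_n}(x_0))\to 0$. The main obstacle is the linearisation step: one must be sure that the rate matrix $B$ obtained after the Girsanov transformation does not depend on the starting point (so that subtracting the two Feynman--Kac identities genuinely exposes a Markov coupling) and that the uniform ergodicity constants $R,\rho$ are genuinely uniform in $\alpha$ --- both of which are exactly what Lemma \ref{lem:PerturbErgodic} (via the strict-balance assumption) and Theorem \ref{thm:strictcontrolergodicity} supply.
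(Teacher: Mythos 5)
Your proposal is correct and follows essentially the same route as the paper: Girsanov linearisation via Lemma \ref{lem:girsanovvalid}, uniform ergodicity of the perturbed chain via Lemma \ref{lem:PerturbErgodic} and Theorem \ref{thm:strictcontrolergodicity}, the Feynman--Kac representation of $v^\alpha$, the total-variation decay bound on $|v^\alpha(x)-v^\alpha(x_0)|$, and a diagonal extraction. The only (immaterial) difference is that you subtract the two representations before letting $T\to\infty$, whereas the paper first passes to the limit in $T$ and then subtracts, obtaining the marginally sharper constant $CR(\alpha+\rho)^{-1}$.
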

\begin{proof}
 Let $C$ be a bound on $|f(\cdot, 0)|$. Then we see from Theorem \ref{thm:discountedBSDEexist} that $|v^\alpha(\cdot)|\leq C/\alpha$.  We first need to convert this into a bound which is uniform in $\alpha$. To do so, we will view the process $Y^\alpha$ under a perturbed measure, and use our ergodicity result to control its growth.

We first note that, by Corollary \ref{cor:markoviansolutions}, the solution process $Z^\alpha$ is constant, up to equivalence $\sim_M$.

For each $T>0$, define the measure with density
\[\frac{d\bQ^{x,\alpha, T}}{d\bP^x} = \E\Big(\int_{]0,\cdot \wedge T]} \frac{f(X_{u-}, Z^\alpha_u) - f(X_{u-},0)}{\|Z^\alpha_u\|^2_{M_u}}(Z^\alpha)^*dM_u\Big).
\]
By Lemma \ref{lem:PerturbErgodic}, under this measure $X$ is still a uniformly ergodic Markov chain, with constants $R,\rho$ which do not depend on $\alpha$ (as $\alpha$ only affects $Z^\alpha$).

Using Girsanov's theorem in the form of Lemma \ref{lem:girsanovvalid}, we can verify that for any $x, \alpha, T$,
\[v^\alpha(x) = E_{\bQ^{x,\alpha, T}}\Big[ e^{-\alpha T}v^\alpha(X_T) + \int_{]0,T]} e^{-\alpha u} f(X_{u-}, 0)du\Big].\]
As $|v(x)|\leq C/\alpha$, letting $T\to\infty$ we obtain
\[v^\alpha(x) = \lim_{T\to\infty}E_{\bQ^{x,\alpha, T}}\Big[\int_{]0,T]} e^{-\alpha u} f(X_{u-}, 0)du\Big].\]

Therefore, for any $x,x'\in\X$, we have
\[\begin{split}|v^\alpha(x) - v^\alpha(x')| &= \Big|\lim_{T\to\infty}E_{\bQ^{x,\alpha, T}}\Big[\int_{]0,T]} e^{-\alpha u} f(X_{u-}, 0)du\Big]\\&\qquad-\lim_{T\to\infty}E_{\bQ^{x',\alpha, T}}\Big[\int_{]0,T]} e^{-\alpha u} f(X_{u-}, 0)du\Big]\Big|\\
   &= \Big|\lim_{T\to\infty}\int_{]0,T]} e^{-\alpha u} \int_\X f(X_{u-}, 0) (dP_u\delta_x - dP_u\delta_{x'})du\Big|\\
&\leq C \Big|\lim_{T\to\infty}\int_{]0,T]} e^{-\alpha u} \|P_u\delta_x - P_u\delta_{x'}\|_{TV}du\Big|\\
&\leq CR \Big|\lim_{T\to\infty}\int_{]0,T]} e^{-\alpha u} e^{-\rho u} du\Big|\\
&= CR (\alpha+\rho)^{-1}
  \end{split}
\]
And so we have a uniform bound $|v^\alpha(x) - v^\alpha(x')|\leq CR \rho^{-1}$. In addition, we recall that $|\alpha v^\alpha(x)|\leq C$.

Given this, we can use a diagonal procedure to construct a sequence $\alpha_n\downarrow 0$ such that
\[\begin{split}
   \alpha_nv^{\alpha_n}(x_0) &\to \lambda\\
 v^{\alpha_n}(x) - v^{\alpha_n}(x_0) &\to v(x) \qquad \text{ for all }x\in\X
  \end{split}\]
for a function $v:\X\to\bR$ (recall that $\X$ is at most countable). Note that, for $C'=CR\rho^{-1}$,  we have the bounds $|\lambda|\leq C'$ and $|v(x)|\leq C'$.

Finally, we notice that, for an arbitrary $x\in\X$,
\[\alpha_nv^{\alpha_n}(x) = \alpha_nv^{\alpha_n}(x_0) + \alpha_n(v^{\alpha_n}(x)- v^{\alpha_n}(x_0)) \to \lambda+0\]
so the convergence of this sequence to $\lambda$ holds for all $x$.
\end{proof}

\begin{theorem}\label{thm:EBSDEexist}
Let $v$, $\lambda$ be as constructed in Lemma \ref{lem:Discconverg}. The triple $(Y, Z, \lambda)$, where
\[Y_t:= v(X_t),\quad e_i^*Z_t= v(e_i),\] is the unique bounded Markovian solution, with $v(x_0)=0$, to the Ergodic BSDE
\[Y_t = Y_T + \int_{]t,T]}  [f(X_{u-}, Z_u) - \lambda]du + \int_{]t,T]}  Z_u^*dM_u.\]
Any other bounded solution $(Y',Z', \lambda')$ satisfies $\lambda=\lambda'$, and any other bounded Markovian solution $(Y',Z', \lambda')$ satisfies $Y'_t=Y_t+c$ for some $c\in\bR$, and $Z'\sim_M Z$.
\end{theorem}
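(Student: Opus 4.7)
The plan is to obtain $(Y,Z,\lambda)$ by passing to the limit $\alpha_n\to 0$ in the discounted BSDEs from Lemma \ref{lem:Discconverg}, and to prove uniqueness by a Girsanov linearization combined with the ergodicity estimates of Section \ref{sec:ergodic}. For the existence step, Corollary \ref{cor:markoviansolutions} gives $Y^{\alpha_n}_t=v^{\alpha_n}(X_t)$ with $Z^{\alpha_n}$ equivalent ($\sim_M$) to the constant vector $v^{\alpha_n}$; since $\bone\in\ker\psi_t$, it is also equivalent to $u^{\alpha_n}:=v^{\alpha_n}-v^{\alpha_n}(x_0)\bone$, and by $\|\cdot\|_{M_u}$-Lipschitzianity $f(X_{u-},Z^{\alpha_n}_u)=f(X_{u-},u^{\alpha_n})$. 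Setting $\tilde Y^{\alpha_n}_t:=Y^{\alpha_n}_t-v^{\alpha_n}(x_0)=u^{\alpha_n}(X_t)$ and rearranging the discounted BSDE yields
\[\tilde Y^{\alpha_n}_t=\tilde Y^{\alpha_n}_T+\int_t^T\big[f(X_{u-},u^{\alpha_n})-\alpha_n v^{\alpha_n}(x_0)-\alpha_n\tilde Y^{\alpha_n}_{u-}\big]du-\int_t^T(Z^{\alpha_n}_u)^*dM_u.\]
By Lemma \ref{lem:Discconverg}, $u^{\alpha_n}\to v$ pointwise and uniformly bounded, $\alpha_n v^{\alpha_n}(x_0)\to\lambda$, and $\alpha_n\tilde Y^{\alpha_n}\to 0$ pointwise and uniformly bounded. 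The Lebesgue integral passes to the limit by dominated convergence, and the stochastic integral passes in $L^2$ by It\^o's isometry since $\|u^{\alpha_n}-v\|^2_{M_u}$ is uniformly bounded and vanishes pointwise. The limit equation identifies $(v(X_\cdot),v,\lambda)$ as a bounded Markovian solution with $v(x_0)=0$.

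For uniqueness, let $(Y,Z,\lambda)$ and $(Y',Z',\lambda')$ be any two bounded solutions and write $\delta Y,\delta Z,\delta\lambda$ for their differences. For each $T>0$ apply Lemma \ref{lem:girsanovvalid} to build a probability $\bQ^T$ under which
\[N^T_t:=\int_{]0,t\wedge T]}(\delta Z_u)^*dM_u-\int_{]0,t\wedge T]}\big[f(X_{u-},Z_u)-f(X_{u-},Z'_u)\big]du\]
is a martingale. Substituting this decomposition into the equation satisfied by $\delta Y$ cancels the nonlinear driver and yields $\delta Y_t=\delta Y_T-\delta\lambda(T-t)-(N^T_T-N^T_t)$, so that
\[\delta Y_t+\delta\lambda(T-t)=E_{\bQ^T}[\delta Y_T\mid\F_t].\]
The right-hand side is bounded uniformly in $T$ by $\|\delta Y\|_\infty$, while the left-hand side grows linearly in $T$ unless $\delta\lambda=0$; hence $\lambda=\lambda'$.

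Assume now $\delta\lambda=0$, so $\delta Y$ is a bounded $\bQ^T$-martingale on $[0,T]$ for every $T$. In the Markovian setting, $Z$ and $Z'$ are (representable as) constant vectors, so Lemma \ref{lem:PerturbErgodic} applies: under $\bQ^T$, $X$ is a time-homogeneous Markov chain whose rate matrix is uniformly ergodic with constants $R,\rho$ independent of $T$. Writing $\delta Y_t=w(X_t)$ for $w:=v-v'$ bounded, the identity $w(X_t)=E_{\bQ^T}[w(X_T)\mid X_t]$ combined with uniform ergodicity forces the right-hand side, as $T\to\infty$, to the constant $\int w\,d\pi$; hence $w$ is constant on $\X$, and $Y'=Y+c$. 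The normalization $v(x_0)=0$ then pins the Markovian solution uniquely. Finally, with $\delta Y$ constant, $d(\delta Y)_t=(f(X_{t-},Z'_t)-f(X_{t-},Z_t))\,dt+(\delta Z_t)^*dM_t=0$ forces the martingale part to vanish, so $\|\delta Z_t\|_{M_t}=0$ a.s., i.e.\ $Z\sim_M Z'$. The main obstacle is precisely the $T$-uniform (and $B$-uniform) ergodic convergence used in this last step: without the bounds furnished by Theorem \ref{thm:strictcontrolergodicity} and Lemma \ref{lem:PerturbErgodic}, the $T\to\infty$ limit in the martingale identity could break down, and the whole argument for Markovian uniqueness would collapse.
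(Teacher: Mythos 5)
Your proposal is correct and follows essentially the same route as the paper: existence by passing to the limit in the (normalized) discounted BSDEs using the convergences of Lemma \ref{lem:Discconverg}, uniqueness of $\lambda$ via the Girsanov linearization of Lemma \ref{lem:girsanovvalid} together with boundedness of $\delta Y$, and uniqueness of the Markovian solution up to an additive constant via Lemma \ref{lem:PerturbErgodic} and the $T$-uniform ergodicity estimates. The only cosmetic differences are that you shift $Y^{\alpha_n}$ by $v^{\alpha_n}(x_0)$ before taking limits (the paper shifts only $Z^{\alpha_n}$), and you deduce $Z\sim_M Z'$ directly from the vanishing of the martingale part of the constant process $\delta Y$ rather than citing Corollary \ref{cor:markoviansolutions}.
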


\begin{proof}
 Let $\alpha_n$ be the sequence constructed in Lemma \ref{lem:Discconverg}. As $Z^{\alpha_n}$ is only defined up to constant shifts, we know that $Y^{\alpha_n}_t:=v^{\alpha_n}(X_t)$, $e_i^*Z^{\alpha_n}_t := v^{\alpha_n}(e_i)-v^{\alpha_n}(x_0)$ solves the discounted BSDE
\[Y^{\alpha_n}_t = Y^{\alpha_n}_T + \int_{]t,T]} \big(f(X_{u-}, Z^{\alpha_n}_u) -{\alpha_n} Y^{\alpha_n}_{u-}\big) du + \int_{]t,T]} (Z^{\alpha_n}_u)^*dM_u.\]
Note that $Z^{\alpha_n}_t$ is constant in $t$, and by the bound established in Lemma \ref{lem:Discconverg}, $|e_i^*Z^{\alpha_n}_t|$ is uniformly bounded. Hence, as the transition rates in $A$ are bounded, we also know $\|Z^{\alpha_n}\|^2_{M_t}$ is uniformly bounded, and dominated convergence yields $\lim (\int Z^{\alpha_n} dM )= \int (\lim Z^{\alpha_n}) dM$. Therefore
\[\begin{split} v(X_t) &= \lim_n (v^{\alpha_n}(X_t) - v^{\alpha_n}(x_0))\\
   &= \lim_n (v^{\alpha_n}(X_T)- v^{\alpha_n}(x_0)) + \lim_n \int_{]t,T]} \big(f(X_{u-}, Z^{\alpha_n}) -{\alpha_n} v^{\alpha_n}(X_{u-})\big) du\\
&\qquad + \lim_n \int_{]t,T]} (Z^{\alpha_n})^*dM_u\\
   &= v(X_T) + \int_{]t,T]} \lim_n \big(f(X_{u-}, Z^{\alpha_n}-v^{\alpha_n}(x_0)) -\lim_n {\alpha_n} v^{\alpha_n}(X_{u-})\big) du\\
&\qquad + \int_{]t,T]} \lim_n (Z^{\alpha_n}-v^{\alpha_n}(x_0))^*dM_u\\
   &= v(X_T) + \int_{]t,T]} \big(f(X_{u-}, Z) -\lambda\big) du+ \int_{]t,T]} Z^*dM_u,\\
  \end{split}
\]
that is, $(v(X_t), v(\cdot), \lambda)$ is a solution to the EBSDE.

To show the solution is unique, suppose $(Y', Z', \lambda')$ is another bounded solution. Let $\tilde Y = Y-Y'$, $\tilde Z=Z-Z'$ and $\tilde \lambda=\lambda-\lambda'$. Then
\[\begin{split}
   \tilde Y_t = \tilde Y_T + \int_{]t,T]}  [f(X_{u-}, Z) - f(X_{u-}, Z'_u)- \tilde \lambda]du + \int_{]t,T]}  \tilde Z_u^*dM_u
  \end{split}
\]
and defining the measure $\bQ_1^T$ as in (\ref{eq:Q1})
\[\frac{d\bQ^T_1}{d\bP^x} = \E\Big(\int_{]0,\cdot \wedge T]} \frac{f(X_{u-}, Z) - f(X_{u-},Z'_u)}{\|\tilde Z\|^2_{M_u}}(\tilde Z_u)^*dM_u\Big),
\]
 we see that
\[\tilde \lambda = T^{-1} E_{\bQ^T_1}\big[\tilde Y_T - \tilde Y_0].\]
As $\tilde Y$ is uniformly bounded, taking $T\to\infty$ we see $\tilde \lambda =0$, that is, $\lambda=\lambda'$. Substituting back, we see that $\tilde Y_0 = E_{\bQ^T_1}\big[\tilde Y_T]$ for all $T$.

From Lemma \ref{lem:PerturbErgodic}, if $Y'$ is a bounded Markovian solution with $Y'_t=v'(X_t)$, we see that $X$ is still a time homogenous uniformly ergodic Markov chain under $\bQ^T_1$, on the interval $[0,T]$. The family of measures $\bQ^T_1$ is then consistent, in that $\bQ^T|_{\F_T}= \bQ^{T'}|_{\F_T}$ for all $T\leq T'$. Therefore, conditioning on $X_0=x$, we can extend them to a single measure $\bQ^x$ under which $X$ is a time homogenous uniformly ergodic Markov chain for all times, and $\bQ^x|_{\F_T}=\bQ^T_1|_{\F_T, X_0=x}$ for all $T$. If $\tilde\pi$ is the ergodic measure of $X$ under $\bQ^x$, we then have
\[v(x)-v'(x) = \tilde Y_0= \lim_T E_{\bQ^x}[\tilde Y_T] = \int_\X \big(v(y)-v'(y)\big) d\tilde\pi(y),\]
and we see that the right hand side is independent of $x$, that is, $v'(x)=v(x)+c$ for some constant $c\in\bR$, for all $x\in\X$. The equivalence $\|Z-Z'\|_{M_t}^2=0$ then follows from Corollary \ref{cor:markoviansolutions}.
\end{proof}
\begin{corollary}
 The sequences $\alpha_nv^{\alpha_n}$ and $v^{\alpha_n}(x) - v^{\alpha_n}(x_0)$ constructed in Lemma \ref{lem:Discconverg} converge for any choice of sequence $\{\alpha_n\downarrow 0\}$.
\end{corollary}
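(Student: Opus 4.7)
The plan is to combine the uniform bounds from Lemma \ref{lem:Discconverg} with the uniqueness statement in Theorem \ref{thm:EBSDEexist}, via a standard subsequence argument. Fix an arbitrary sequence $\{\alpha_n \downarrow 0\}$. Crucially, the uniform bounds $|v^{\alpha_n}(x)-v^{\alpha_n}(x_0)|\leq C'$ and $|\alpha_n v^{\alpha_n}(x)|\leq C'$ established in Lemma \ref{lem:Discconverg} do not depend on the particular sequence chosen, only on $f$, $A$, $R$, $\rho$, so they apply here.

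Next, I would take an arbitrary subsequence $\{\alpha_{n_k}\}$. Exactly the same diagonal extraction used in Lemma \ref{lem:Discconverg} (enumerating $\X$ and repeatedly extracting convergent real subsequences from bounded sequences) produces a further subsequence, which I still denote $\{\alpha_{n_k}\}$, along which $\alpha_{n_k}v^{\alpha_{n_k}}(x_0)\to\lambda'$ and $v^{\alpha_{n_k}}(x)-v^{\alpha_{n_k}}(x_0)\to v'(x)$ for every $x\in\X$, for some bounded $v':\X\to\bR$ with $v'(x_0)=0$ and some $\lambda'\in\bR$. By the same passage to the limit in the discounted BSDE carried out in the proof of Theorem \ref{thm:EBSDEexist} (the uniform boundedness of the $Z^{\alpha_{n_k}}$ and the bounded transition rates again justify dominated convergence on the stochastic integral), the triple $(v'(X_t), v'(\cdot), \lambda')$ is a bounded Markovian solution to the ergodic BSDE.

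Now the uniqueness part of Theorem \ref{thm:EBSDEexist} forces $\lambda'=\lambda$ and $v'(\cdot)=v(\cdot)+c$ for some $c\in\bR$; the normalisation $v'(x_0)=v(x_0)=0$ pins down $c=0$, so $v'=v$. Therefore every subsequence of $\{\alpha_n v^{\alpha_n}(x)\}$ admits a further subsequence converging to $\lambda$, and every subsequence of $\{v^{\alpha_n}(x)-v^{\alpha_n}(x_0)\}$ admits a further subsequence converging pointwise on $\X$ to $v$. Pointwise convergence on the countable set $\X$ is metrisable on uniformly bounded sets (e.g.\ via $\sum_k 2^{-k} (|g(x_k)|\wedge 1)$ for an enumeration of $\X$), and the same subsequence characterisation of convergence gives that the whole sequences converge to $\lambda$ and $v$ respectively.

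The only subtle point, and the step where the argument could fail if one were not careful, is ensuring that the limit $(v',\lambda')$ really is a \emph{Markovian} solution with the correct normalisation $v'(x_0)=0$ — without the normalisation the family of solutions $\{v+c\}$ would prevent us from identifying $v'$ with $v$. Since $v^{\alpha_n}(x_0)-v^{\alpha_n}(x_0)\equiv 0$, the normalisation passes to the limit automatically, and uniqueness then does all the work.
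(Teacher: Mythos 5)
Your argument is correct and is essentially the paper's own proof: the paper disposes of this corollary in one line by contradiction (two subsequential limits would give two distinct bounded Markovian solutions of the EBSDE, contradicting Theorem \ref{thm:EBSDEexist}), and your subsequence-of-a-subsequence argument is just the careful writing-out of that, including the one genuinely necessary observation that the normalisation $v'(x_0)=0$ survives the limit and pins down the additive constant. No gaps.
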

\begin{proof}
 Suppose this were not the case. Then we could construct two sequences with distinct limits, both of which would yield bounded Markovian solutions to the EBSDE. However we have shown that there is only one such solution, which is a contradiction.
\end{proof}

\begin{corollary}\label{cor:simplify}
The value $\lambda$ in the EBSDE solution $(Y, Z, \lambda)=(v(X_t), v, \lambda)$ satisfies
 \[\lambda = \int_\X f(y, v) d\pi(y),\]
and hence, if we define a signed measure $\mu^x$ on $\X$ by
$\mu^x(y) = \int_{]0,\infty[} (\bP^x_t(y) - \pi(y)) dt,$
(which is well defined due to the uniform ergodicity of $X$ under $\bP$) we have
\[v(x) = c+ \int_\X f(y,v) d\mu^x(y)\]
for some $c=\int_{\X} v(x)d\pi\in\bR$.
\end{corollary}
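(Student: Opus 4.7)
The plan is to start from the EBSDE satisfied by $(Y,Z,\lambda)=(v(X_\cdot),v,\lambda)$ and take expectations under $\bP^x$. Since $Z$ is bounded componentwise (by Lemma \ref{lem:Zbound}) and the jumps of $M$ have bounded rates, the process $\int Z^*dM$ is a genuine $\bP^x$-martingale on every finite interval. Consequently, applying $E^x[\cdot]$ to
\[
v(x)=v(X_T)+\int_{]0,T]}[f(X_{u-},v)-\lambda]\,du-\int_{]0,T]}Z^*_u\,dM_u
\]
gives the identity
\begin{equation}\label{eq:planbasic}
v(x)=E^x[v(X_T)]+\int_0^T\bigl(E^x[f(X_{u-},v)]-\lambda\bigr)\,du.
\end{equation}

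Next I would identify $\lambda$ by dividing \eqref{eq:planbasic} by $T$ and letting $T\to\infty$. The left side vanishes because $v(x)$ is finite. The boundary term $T^{-1}E^x[v(X_T)]\to 0$ since $v$ is bounded (Lemma \ref{lem:Discconverg}). For the integral term, uniform ergodicity of $X$ under $\bP^x$ (which follows from the assumed uniform ergodicity of the unperturbed chain) gives
\[
E^x[f(X_{u-},v)]=\int_\X f(y,v)\,d(\bP^x_u)(y)\;\longrightarrow\;\int_\X f(y,v)\,d\pi(y)
\]
as $u\to\infty$, with exponential rate, so by Cesaro averaging the time average converges to $\int_\X f(y,v)\,d\pi(y)$. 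This forces $\lambda=\int_\X f(y,v)\,d\pi(y)$.

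For the second identity, substitute the newly derived expression for $\lambda$ back into \eqref{eq:planbasic}, writing $\lambda=\int_\X f(y,v)\,d\pi(y)$ inside the integrand to obtain
\[
v(x)-E^x[v(X_T)]=\int_0^T\int_\X f(y,v)\,\bigl(d\bP^x_u(y)-d\pi(y)\bigr)\,du.
\]
Since $|f(\cdot,v)|$ is bounded (by the Lipschitz assumption together with boundedness of $v$ and of $f(\cdot,0)$), and $\|\bP^x_u-\pi\|_{TV}\leq Re^{-\rho u}$, the double integral is absolutely convergent on $[0,\infty[\,\times\X$, so Fubini applies and we may send $T\to\infty$. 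On the left side $E^x[v(X_T)]\to\sum_y v(y)\pi(y)=:c$, again by uniform ergodicity and boundedness of $v$. On the right side we obtain $\int_\X f(y,v)\,d\mu^x(y)$ by definition of $\mu^x$, yielding
\[
v(x)-c=\int_\X f(y,v)\,d\mu^x(y),
\]
as claimed.

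The only non-routine point is ensuring the interchange of the $u$ and $y$ integrations in the last step and the passage to the limit $T\to\infty$; both are handled uniformly by the exponential decay estimate of Definition \ref{def:unifergodic} combined with the boundedness of $f(\cdot,v)$. Everything else reduces to taking conditional expectations under $\bP^x$ and applying the ergodic theorem quantitatively.
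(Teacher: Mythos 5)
Your argument is correct, and its second half coincides with the paper's: substitute $\lambda=\int_\X f(y,v)\,d\pi(y)$ back into the expected EBSDE, use the exponential total-variation decay together with boundedness of $f(\cdot,v)$ to justify Fubini and the passage $T\to\infty$, and identify the result with $\int_\X f(y,v)\,d\mu^x(y)$. Where you genuinely differ is in extracting $\lambda$. The paper integrates the identity $v(x)=E_{\bP^x}\big[v(X_T)+\int_0^T(f(X_{u-},v)-\lambda)\,du\big]$ over the initial state $x$ against the invariant measure $\pi$; by stationarity the two $v$-terms cancel exactly, and $\lambda=\int_\X f(y,v)\,d\pi(y)$ falls out for every finite $T$ with no limiting argument and using only the invariance of $\pi$, not the rate of convergence. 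You instead fix $x$, divide by $T$, and let $T\to\infty$, obtaining $\lambda$ as a Cesaro limit via the quantitative ergodicity estimate. Both are sound; the paper's route is algebraically cleaner for this step, while yours works from a single initial condition and makes the ``long-run average cost'' interpretation of $\lambda$ explicit. One small point common to both arguments: $E^x[f(X_{u-},v)]$ and $\int_\X f(y,v)\,d\bP^x_u(y)$ agree only for Lebesgue-a.e.\ $u$ (the chain jumps at most countably often), which is all that is needed under the time integral.
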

\begin{proof}
 The invariance of the ergodic distribution $\pi$ implies that, for any fixed time $T$, any $g:\X\to\bR$,
\[\int_\X E_{\bP^x}[g(X_T)] d\pi(x)=\int_\X E_{\bP^x}[g(X_{T-})] d\pi(x) = \int_\X g(x) d\pi(x).\]
We write
\[v(x) = E_{\bP^x}\Big[v(X_T) + \int_{]0,T]} \big(f(X_{u-}, v) -\lambda\big) du\Big].\]
so by the stated invariance property,
\[\begin{split}
   \int_\X v(x)d\pi(x) &= \int_\X E_{\bP^x}[v(X_T)] d\pi(x) + \int_\X E_{\bP^x}\Big[\int_{]t,T]} \big(f(X_{u-}, v) -\lambda\big) du\Big]d\pi(x)\\
 &= \int_\X v(x) d\pi(x) + \int_{]t,T]} \Big[\int_\X E_{\bP^x}[f(X_{u-}, v)]d\pi(x)\Big] du -T\lambda \\
 &= \int_\X v(x) d\pi(x) + \int_{]t,T]} \Big[\int_\X f(x, v)d\pi(x)\Big] du -T\lambda \\
 &= \int_\X v(x) d\pi(x) + T\int_\X f(x, v)d\pi(x)  -T\lambda\\
  \end{split}
\]
and rearrangement yields the first result.

Consequently, we also have
\[\begin{split}
  v(x) &= \lim_{T\to\infty}E_{\bP^x}\Big[v(X_T) + \int_{]0,T]} \big(f(X_{u-}, v) -\lambda\big) du\Big]\\
&= \int_\X v(y) d\pi(y) + \lim_{T\to\infty}\int_{]0,T]} \Big(\int_\X f(y, v)d\bP^x_u(y)  -\int_\X f(y, v)d\pi(y)\Big) du\\
&= \int_\X v(y) d\pi(y) + \int_{]0,\infty[}\Big( \int_\X f(y, v)d(\bP^x_u(y)-\pi(y))\Big) du\\
&= \int_\X v(y) d\pi(y) + \int_\X f(y, v)d\mu^x(y).\\
  \end{split}
\]

\end{proof}

\begin{remark}
 If the Lipschitz constant of $f$ were sufficiently small, when compared with the coefficients in the ergodicity of $X$, the above corollary would provide a simple direct proof of existence for EBSDEs. Furthermore, while we do not have a comparison theorem for the $Y$ component of BSDEs, from this result, if $c$ is fixed, we can differentiate $v(x)$ in terms of $f(y, v(\cdot))$, and (again given a small enough Lipschitz constant) solve the resultant implicit equation. From this analysis, we can see that, if $f$ is sufficiently small, then $v(x)$ is monotone increasing in $f(x,\cdot)$.
\end{remark}

\begin{remark}
 This representation of $v$ in fact provides an intuitive meaning for $v$. We see here that, up to the addition of a constant, $v(x)$ is given by the cost $f$, integrated through time with respect to the `deviation from ergodicity' measure $\mu^x$. Therefore, it is natural to think of $v$ as giving a `short-run additional expected cost' term, while $\lambda$ gives the long-run ergodic cost.
\end{remark}

We now give a variant of the comparison theorem for EBSDEs, considering the $\lambda$ component of the solution.
\begin{theorem}\label{thm:compthmlambda}
 Let $f$ and $f'$ be two strictly balanced drivers, and $(Y, Z, \lambda)$ and $(Y', Z', \lambda')$ the corresponding EBSDE solutions. Then if $f(x,z)\geq f'(x,z)$ for all $x\in\X$, $Z\in\bR^N$, then $\lambda\geq\lambda'$.
\end{theorem}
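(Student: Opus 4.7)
The plan is to mimic the uniqueness argument for $\lambda$ given in Theorem \ref{thm:EBSDEexist}, but exploiting the one-sided inequality $f\geq f'$ to get a one-sided conclusion on $\tilde\lambda:=\lambda-\lambda'$.

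First, subtract the two EBSDEs. Writing $\tilde Y:=Y-Y'$, $\tilde Z:=Z-Z'$, we get
\[
\tilde Y_t = \tilde Y_T + \int_t^T \bigl[f(X_{u-},Z_u) - f'(X_{u-},Z'_u) - \tilde\lambda\bigr]\,du - \int_t^T \tilde Z_u^* dM_u.
\]
Now split the driver difference through the \emph{same} driver $f$:
\[
f(X_{u-},Z_u) - f'(X_{u-},Z'_u) = \underbrace{\bigl[f(X_{u-},Z_u) - f(X_{u-},Z'_u)\bigr]}_{\text{linearised in } \tilde Z} + \underbrace{\bigl[f(X_{u-},Z'_u) - f'(X_{u-},Z'_u)\bigr]}_{=: g_u\, \geq\, 0 \text{ since } f\geq f'}.
\]

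Next, since $f$ is strictly balanced, Lemma \ref{lem:girsanovvalid} guarantees that for each $T>0$ the measure
\[
\frac{d\bQ^T}{d\bP} = \E\Big(\int_{]0,\cdot\wedge T]}\frac{f(X_{u-},Z_u)-f(X_{u-},Z'_u)}{\|Z_u-Z'_u\|^2_{M_u}}\tilde Z_u^* dM_u\Big)
\]
is a probability measure and that, under $\bQ^T$,
\[
\tilde M_t = \int_{]0,t\wedge T]}\bigl[f(X_{u-},Z_u)-f(X_{u-},Z'_u)\bigr]\,du + \int_{]0,t\wedge T]}\tilde Z_u^* dM_u
\]
is a martingale. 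Substituting $\int_t^T \tilde Z_u^* dM_u = \tilde M_T-\tilde M_t - \int_t^T [f(X_{u-},Z_u)-f(X_{u-},Z'_u)]\,du$ into the BSDE for $\tilde Y$, the awkward linearised term cancels and we are left with
\[
\tilde Y_t = \tilde Y_T + \int_t^T \bigl[g_u - \tilde\lambda\bigr]\,du - (\tilde M_T-\tilde M_t).
\]

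Taking $\bQ^T$-conditional expectations and rearranging gives
\[
\tilde\lambda\,(T-t) = E_{\bQ^T}\Big[\int_t^T g_u\,du\,\Big|\F_t\Big] + E_{\bQ^T}[\tilde Y_T\,|\,\F_t] - \tilde Y_t.
\]
Both $Y$ and $Y'$ are bounded Markovian solutions (by Theorem \ref{thm:EBSDEexist}), so $|\tilde Y|\leq C$ uniformly, while $g_u\geq 0$ pointwise. Therefore
\[
\tilde\lambda \geq -\frac{2C}{T-t},
\]
and letting $T\to\infty$ yields $\tilde\lambda\geq 0$, i.e. $\lambda\geq\lambda'$, as required.

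The only subtle step is the Girsanov step: one must check that the density is a true probability measure, which is exactly what strict balancedness buys (through Lemma \ref{lem:girsanovvalid}); everything else is routine, since uniform boundedness of $\tilde Y$ and the sign of $g_u$ do all the work after the linearised driver term has been eliminated. The argument does not require any comparison theorem on the $Y$-component, only on the ergodic value $\lambda$, which is precisely what the integration against $(T-t)$ and passage to the limit produces.
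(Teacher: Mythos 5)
Your proof is correct, and it reaches the conclusion by a genuinely different final step from the paper. Both arguments begin with the same Girsanov linearisation of the driver difference (you linearise through $f$, leaving the nonnegative residual $g_u = f(X_{u-},Z'_u)-f'(X_{u-},Z'_u)$ evaluated at $Z'$; the paper linearises through $f'$, leaving $f(X_{u-},Z)-f'(X_{u-},Z)$ evaluated at $Z$ — these are symmetric and both legitimate since both drivers are strictly balanced). Where you diverge is in extracting the sign of $\tilde\lambda$: you use only the uniform boundedness of $\tilde Y$, divide by $T-t$ and let $T\to\infty$, exactly mirroring the uniqueness-of-$\lambda$ argument inside Theorem \ref{thm:EBSDEexist}. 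The paper instead integrates the linearised equation against the invariant measure $\pi_B$ of the chain under the perturbed measure (so it needs Lemma \ref{lem:PerturbErgodic} and Theorem \ref{thm:strictcontrolergodicity} at this point), which cancels the $\int(v-v')\,d\pi_B$ terms and yields the sharper identity $\lambda-\lambda'=\int_\X\bigl(f(x,Z)-f'(x,Z)\bigr)d\pi_B$. Your route is more elementary — it requires no ergodicity of the perturbed chain, only Girsanov validity and boundedness — at the cost of not producing this explicit representation of the gap. One bookkeeping remark: with $\tilde M$ written as in the statement of Lemma \ref{lem:girsanovvalid} (with a plus sign on the $du$-integral), your substitution would double the linearised term rather than cancel it; the cancellation you want, and your final displayed identity, correspond to the compensated form $\int\tilde Z_u^*dM_u-\int\bigl[f(X_{u-},Z_u)-f(X_{u-},Z'_u)\bigr]du$ being the $\bQ^T$-martingale, which is the version the paper actually uses in the uniqueness argument of Theorem \ref{thm:discountedBSDEexist}. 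This is a sign slip inherited from the lemma's statement, not a gap in your argument.
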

\begin{proof}
 Take the measure with density
\[\frac{d\bQ^{x,T}}{d\bP^x} = \E\Big(\int_{]0,\cdot \wedge T]} \frac{f'(X_{u-}, Z) - f'(X_{u-},Z')}{\|Z-Z'\|^2_{M_u}}(Z-Z')^*dM_u\Big),\]
and the corresponding rate matrix $B$ and resultant ergodic measure $\pi_B$. Then, if $Y_t=v(X_t)$ and $Y'_t=v'(X_t)$, integrating the difference of the EBSDEs yields
\[\begin{split}
   &\int v(x)-v'(x) d\pi_B \\
&= \int_\X E_{\bQ^{x,T}}\Big[ Y_T - Y'_T+ \int_{]0,T]} \big( f(X_{u-}, Z) - f'(X_{u-},Z)\big)dt - T(\lambda-\lambda')\Big]d\pi_B\\
   &= \int_\X \big(v(x)-v'(x)\big) d\pi_B + \int_{]0,T]} \Big(\int_\X f(x, Z) - f'(x,Z)d\pi_B \Big)dt - T(\lambda-\lambda')\\
  \end{split}
\]
and so
\[(\lambda-\lambda')= \int_\X \big(f(x, Z) - f'(x,Z)\big)d\pi_B\geq 0.\]
\end{proof}

\begin{remark}
 We note that we have also shown that, given that $f$ is strictly balanced and $A$ is the generator of a uniformly ergodic chain, the vector equation
\[0 =  f (\cdot, v) +A^*v-\lambda\bone\]
admits a unique solution $(v(\cdot), \lambda)\in\bR^N\times\bR$, and that this solution is approximated by the equations considered in Corollary \ref{cor:markoviansolutions}. This is without any further reference to the monotonicity of $f$. This can be compared with the classical equation for ergodic cost, which is simply
\[0= f(\cdot) + A^*v-\lambda\bone.\]
\end{remark}

\begin{remark}\label{rem:numerics}
 From a numerical perspective, as we expect that $f(\cdot,v) + A^*v$ will typically be `approximately monotone' in $v$, solving this equation iteratively, for example through the equations
\[\begin{split}
\lambda_{n}& = N^{-1} \bone^*\Big(f(\cdot, v_n)+A^*v_n\Big)\\
   v_{n+1}&= (A^*)^+\Big(\lambda_n\bone- f(\cdot, v_n)\Big)
  \end{split}\]
where $(A^*)^+$ denotes the Moore-Penrose pseudoinverse of $A^*$, will often provide a simple numerical scheme in the finite state case. 

In the infinite state case, a Monte-Carlo approach can be used to approximate the equations given by Corollary \ref{cor:simplify}, however the convergence of such a scheme lies beyond the scope of this paper.
\end{remark}

\section{Applications and Examples}\label{sec:example}
\subsection{Rate uncertainty}
As a first example, we consider the rate matrix $A$ constructed (\ref{eq:pathA}), and the driver
\[f(x, v) = I_{x\in\zeta}+ \min_{r\in[1/\beta, \beta]}\{(r-1)v^*Ax\},\]
where $\zeta\subseteq \X$ and $\beta>1$. Essentially, this driver attempts to determine the ergodic probability of being in the set $\zeta$, however it introduces an uncertainty about the overall transition rate of the chain, by scaling the rates up or down with the parameter $\beta$, so as to minimise the probability of being in $\zeta$. A related BSDE over finite horizon, for a different choice of $A$, was considered in \cite{Cohen2011b}. 
 
For $\beta=2$, we solve the EBSDE numerically using the simple algorithm suggested in Remark \ref{rem:numerics}. We list all possibilities for $\zeta$ up to symmetries, and choose $v$ such that its values in each state sum to zero. For comparison, we also list the ergodic probability associated with the set $\zeta$.
\begin{center}
\begin{tabular}{c|rrrr|cc}
 $\zeta$ & $v(e_1)$ & $v(e_2)$ & $v(e_3)$ & $v(e_4)$ & $\lambda$& $\pi(\zeta)$\\
\hline
$\emptyset$ &$ 0$&$0$&$0$&$0$&$0$& $0$\\
$\{e_1\}$ &$    0.1207 $ &$  -0.0172 $ &$  -0.0517  $ &$ -0.0517$ &$ 0.0345$ & $0.125$\\
$\{e_2\}$ & $-0.0652  $ & $  0.1087  $ & $ -0.0217  $ & $ -0.0217$ & $0.1304$& $0.375$\\
$\{e_1, e_2\}$&$ 0.1000   $&$ 0.1000 $&$  -0.1000 $&$  -0.1000$&$0.2000$& $0.500$\\
$\{e_1, e_3\}$&$0.1500  $&$ -0.0500  $&$  0.0500  $&$ -0.1500$&$ 0.2000$& $0.500$\\
$\{e_1, e_4\}$ &$     0.0769  $&$ -0.0769  $&$ -0.0769  $&$  0.0769$&$ 0.0769$& $0.250$\\
$\{e_2, e_3\}$ &$   -0.1429  $&$  0.1429  $&$  0.1429 $&$  -0.1429 $&$ 0.4286$& $0.750$\\
$\{e_1, e_2,e_3\}$&$     0.1364  $&$  0.1364 $&$   0.0455  $&$ -0.3182 $&$ 0.6364$& $0.875$\\
$\{e_1, e_2, e_4\}$ &$    0.0294 $&$   0.0294  $&$ -0.1471 $&$   0.0882 $&$ 0.2941$& $0.625$\\
$\{e_1, e_2, e_3, e_4\}$&$     0  $&$   0  $&$   0  $&$   0$&$1$& $1$\\
\end{tabular}
\end{center}

Note that, in the classical case without the rate uncertainty term, we would have $\lambda_\zeta = \pi(\zeta)$. This is clearly not the case here, due to the nonlinearity introduced by the rate uncertainty. In fact, we can see here that $\lambda$ defines, in some sense, an `ergodic capacity' for the chain (however one can verify that the EBSDE solution is generally not given by the Choquet integral with respect to this capacity). Note further that while the ergodic probability does not reveal the asymmetry between the states of the chain, the EBSDE is affected by this in a nontrivial way.

\subsection{Classical Optimal Ergodic Control}\label{sec:control}
We now give a more abstract example, indicating how an ergodic control problem can be seen in this framework. Problems of this sort have been extensively considered, see for example the classical paper of Kakumanu \cite{Kakumanu1972}, or more recent work by Guo and Hern\'andez-Lerma \cite{Guo2001}.

Consider the problem of minimizing
\[J(x, U) = {\lim\sup}_{T\to\infty} T^{-1} E^{U}_x\Big[\int_{]0,T]} L(X_{u-}, U_t) dt\Big]\]
where
\begin{itemize}
 \item $\U$ is a space of controls, which is a separable metric space,
 \item $U$ is a $\U$ valued predictable process,
 \item $L:\X\times\U\to\bR$ is a bounded measurable cost function,
 \item $E^{U}_x$ is the expectation under which at time $t$, for the path $\omega$, $X$ jumps from state $e_i$ to state $e_j$ at a rate $e_j^* A^{U_t(\omega)}e_i$, for some measurable matrix valued function $A^{(\cdot)}:\U\to\text{rate matrices}$, and $X_0=x$,
 \item for some $\gamma>0$, for all $u \in\U$, the matrices $A^{u}$ are uniformly bounded and strictly dominated by $A$ with constant $\gamma$ (in the sense of Definition \ref{control}), for some reference rate matrix $A$ under which $X$ is a uniformly ergodic Markov chain.
\end{itemize}
We shall write $E$ for the expectation under which $X$ is a uniformly ergodic Markov chain with rate matrix $A$.

We define the Hamiltonian
\begin{equation}\label{eq:hamiltonian}
 f(x,z) = \inf_{u\in\U}\{L(x,u) + z^* (A^u-A)x\}.
\end{equation}
We notice that $f(\cdot, 0)$ is bounded, and by the assumption that the $A^u$ are strictly controlled by $A$, it is easy to show that $f$ is a Lipschitz function in $z$ under the $\|\cdot\|_{M_t}$ norm, and that $f$ is strictly balanced (by testing with $z$ equal to each basis vector). Therefore, the EBSDE with driver $f(x,z)$ admits a unique bounded Markovian solution $(\bar Y_t,\bar Z_t,\bar\lambda) = (v(X_t), v, \bar\lambda)$, where $v$ is the vector with components $v(e_i)$.

If this infimum is attained, then there exists (assuming the continuum hypothesis, by McShane and Warfield \cite{McShane1967}) a measurable function $\kappa:\X\times\bR^N\to\U$ such that
\[f(x,z) = L(x,\kappa(x,z)) + z^*(A^{\kappa(x,z)}-A)x.\]
 We then have the following theorem.

\begin{theorem}\label{thm:optimalcontrol}
 In the setting described above, let $(Y, Z, \lambda)$ be any (possibly non-Markovian) bounded solution to the EBSDE (\ref{eq:EBSDE}) with driver $f$. Then the following hold.
\begin{enumerate}[(i)]
 \item For an arbitrary control $U$ we have $J(x,U)\geq \lambda=\bar\lambda$, and equality holds if and only if
\[L(X_{t-}, U_t) + Z_t^* (A^{U_t}-A)X_{t-}= f(X_{t-}, Z_t)\qquad d\bP\times dt-a.e.\]
\item If the infimum is attained in (\ref{eq:hamiltonian}), then the control $\bar U_t = \kappa(X_t, Z_t)$ verifies $J(x, \bar U)= \bar \lambda$. 
\end{enumerate}
In particular, for the bounded Markovian solution to the EBSDE, in terms of the vector $v$ the following hold.
\begin{enumerate}[(i)]
\setcounter{enumi}{2}
 \item For arbitrary controls $U$ we have $J(x, U) = \bar \lambda$ if and only if
\[L(X_{t-}, U_t) + v^*(A^{U_t}-A)X_{t-}= f(X_{t-}, v)\qquad d\bP\times dt-a.e.\]
\item If the infimum is attained in (\ref{eq:hamiltonian}), then the control $\bar U_t = \kappa(X_{t-}, v)$ verifies $J(x, \bar U)= \bar \lambda$, that is, we have an optimal feedback control.
\end{enumerate}
\end{theorem}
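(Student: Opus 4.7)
The plan is a standard BSDE-based verification argument, built around a Girsanov change of measure from $\bP$ to the controlled measure $\bP^U$. The crucial observation is that, because $A^u$ is strictly controlled by $A$ uniformly in $u$, the Dol\'eans--Dade exponential used to define $\bP^U$ (analogous to the density in Lemma \ref{lem:girsanovvalid} and Lemma \ref{lem:PerturbErgodic}) is a true martingale and the resulting chain under $\bP^U$ has rate $A^{U_t}$. By the uniqueness of the Doob--Meyer decomposition, this means that under $\bP^U$ the process
\[
\tilde M^U_t := X_t - X_0 - \int_{]0,t]} A^{U_s} X_{s-}\, ds
\]
is a martingale, so writing $M_t = \tilde M^U_t + \int_0^t (A^{U_s}-A)X_{s-}\,ds$ and substituting into the EBSDE gives, for any $T<\infty$,
\[
Y_0 = Y_T + \int_{]0,T]}\!\big[f(X_{u-},Z_u) - Z_u^*(A^{U_u}-A)X_{u-} - \lambda\big]du - \int_{]0,T]} Z_u^*\, d\tilde M^U_u.
\]

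The definition of $f$ in \eqref{eq:hamiltonian} gives the pointwise inequality $f(x,z) \leq L(x,u) + z^*(A^u-A)x$ for every $u\in\U$, with equality precisely at the condition appearing in (i). Writing $D_u \geq 0$ for the pointwise defect, and observing that $Z$ is uniformly bounded by Lemma \ref{lem:Zbound} while the transition rates under $\bP^U$ are uniformly bounded (so the stochastic integral is a $\bP^U$-martingale on $[0,T]$), I take $E_x^U$ and rearrange to obtain
\[
T^{-1}E_x^U\Big[\int_{]0,T]} L(X_{u-},U_u)\,du\Big] = \lambda + T^{-1}E_x^U\Big[\int_{]0,T]} D_u\,du\Big] + T^{-1}\big(Y_0 - E_x^U[Y_T]\big).
\]
Boundedness of $Y$ makes the last term vanish as $T\to\infty$. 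Since $D_u\geq 0$, taking $\limsup$ yields $J(x,U)\geq\lambda$, with equality in (i) holding whenever $D_u \equiv 0$ $d\bP\times dt$-a.e. For the converse direction, $J(x,U)=\lambda$ forces $\lim_T T^{-1}E_x^U[\int_0^T D_u\,du]=0$; combined with the uniform ergodicity of $X$ under $\bP^U$, guaranteed by Theorem \ref{thm:strictcontrolergodicity}, this implies $D_u = 0$ a.e., which is the main technical step in the converse.

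Part (ii) is then immediate: the feedback $\bar U_t = \kappa(X_{t-},Z_t)$ realises the infimum in $f$, so $D_u\equiv 0$ and every inequality above becomes an equality, giving $J(x,\bar U) = \bar\lambda$ exactly. Parts (iii) and (iv) follow by specialising the same argument to the bounded Markovian solution $(Y_t,Z_t)=(v(X_t),v)$ produced by Theorem \ref{thm:EBSDEexist}, replacing $Z_u$ throughout by the deterministic vector $v$; in particular (iv) delivers an honest feedback control depending only on the current state. The main obstacle is the converse direction of (i)---extracting pointwise a.e. equality from a Ces\`aro time average---which is precisely where the uniform ergodicity bounds developed in Section \ref{sec:ergodic}, transferred to $\bP^U$ via Lemma \ref{lem:PerturbErgodic}, do essential work.
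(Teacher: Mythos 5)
Your argument for the inequality $J(x,U)\geq\lambda$ and for parts (ii)--(iv) is essentially the paper's own verification argument: pass to the controlled measure, rewrite the EBSDE so that the nonnegative defect $D_u = L(X_{u-},U_u) + Z_u^*(A^{U_u}-A)X_{u-} - f(X_{u-},Z_u)$ appears, take expectations, divide by $T$, and let the boundedness of $Y$ kill the boundary term. The only cosmetic difference is that you work directly under $\bP^U$ and decompose $M_t = \tilde M^U_t + \int_{]0,t]}(A^{U_s}-A)X_{s-}\,ds$, whereas the paper defines a measure $\bQ^{x,T}$ by a Dol\'eans--Dade density and invokes Lemma \ref{lem:girsanovvalid}; these are two phrasings of the same Girsanov step. (One small omission: the identification $\lambda=\bar\lambda$ for an \emph{arbitrary} bounded solution is the uniqueness statement of Theorem \ref{thm:EBSDEexist} and should be cited rather than used silently.)

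The genuine gap is in your treatment of the ``only if'' half of (i). From $J(x,U)=\lambda$ you correctly deduce $\lim_{T}T^{-1}E^U_x\big[\int_0^T D_u\,du\big]=0$, but this Ces\`aro statement does not imply $D_u=0$ $d\bP\times dt$-a.e.: a nonnegative defect supported on, say, $\Omega\times[0,1]$ has vanishing Ces\`aro average while being strictly positive on a set of positive measure. Uniform ergodicity cannot rescue this, since long-run time averages are insensitive to behaviour on any finite time window; and in any case Theorem \ref{thm:strictcontrolergodicity} is stated for a fixed rate matrix $B$ strictly controlled by $A$, whereas for a general predictable control $U$ the intensities $A^{U_t}$ are time-inhomogeneous and path-dependent, so $X$ need not be Markov under $\bP^U$ and neither that theorem nor Lemma \ref{lem:PerturbErgodic} applies directly. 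You are right that this converse is the delicate point --- the paper's own proof establishes only the inequality and the sufficiency of the condition via attainment of the infimum, and does not prove the converse --- but the argument you sketch for it does not close.
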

\begin{proof}
  That $\lambda=\bar\lambda$ is a consequence of Theorem \ref{thm:EBSDEexist}, and points (iii) and (iv) follow directly from (i) and (ii).

To show (i), for $T>0$ define the measures,
\[\frac{d\bQ^{x,T}}{d\bP^x} = \E\Big(\int_{]0,\cdot \wedge T]} \frac{Z_s^* (A^{U_s}-A)X_{s-} }{\|Z_s\|^2_{M_s}}Z_s^*dM_s\Big).\]
As $(Y,Z,\bar\lambda)$ is a solution to the EBSDE, we have
\[\begin{split}
   \bar\lambda&=\frac{1}{T} E_{\bQ^{x,T}}[Y_T-Y_0] + \frac{1}{T} E_{\bQ^{x,T}}\Big[\int_{]0,T]} L(X_{s-}, U_s) ds\Big]\\
&\qquad +\frac{1}{T} E_{\bQ^{x,T}}\Big[\int_{]0,T]} f(X_{s-}, Z_s) - Z_s^*(A^{U_s}-A)X_{s-} - L(X_{s-}, U_s) ds\Big],
  \end{split}
\]
and hence, as $f$ is an infimum over the controls,
\[\bar\lambda + \frac{1}{T} E_{\bQ^{x,T}}[Y_0-Y_T] \leq \frac{1}{T} E_{\bQ^{x,T}}\Big[\int_{]0,T]} L(X_{s-}, U_s) ds\Big].\]
As $Y_0-Y_T$ is uniformly bounded, taking a limit we see that
\[\bar\lambda \leq {\lim\sup}_{T\to\infty} \frac{1}{T} E_{\bQ^{x,T}}\Big[\int_{]0,T]} L(X_{s-}, U_s) ds\Big] = J(x,U).\]
Conversely, if the infimum is attained, we have $f(X_{s-}, Z_s)= L(X_{s-}, \bar U_s)+ Z_s(A^{\bar U_s}-A)X_{s-}$ for some $\bar U_s = \kappa(X_{s-},Z_s)$. Then equality holds throughout and we see that
\[\bar\lambda = {\lim\sup}_{T\to\infty} \frac{1}{T} E_{\bQ^{x,T}}\Big[\int_{]0,T]} L(X_{s-}, \bar U_s) ds\Big] = J(x,\bar U).\]
\end{proof}

\begin{remark}
From this analysis, we also see that one could equally define the cost functional $J(x,U)$ as the $\lim\inf$, rather than the $\lim\sup$, and the same conclusions would hold, with the same optimal cost $\bar\lambda$. This is simply because the function under consideration converges for an optimal policy, as given by the EBSDE solution.
\end{remark}

\subsection{Risk averse control}
The strength of the EBSDE approach is, however, not made manifest by the classical linear setting. Through the use of EBSDEs, one can happily consider nonlinear examples, in particular, when the expectation $E_x^U$ is replaced by a dynamically consistent nonlinear expectation, in the sense of Peng \cite{Peng1997}.

For example, one could consider a form of risk averse ergodic control under uncertainty. In this case, we are in a similar setting to above, but the control $U_s$ does not yield a unique rate matrix $A^{U_s}$, rather a family of matrices $\{A^{U_s, w}\}_{w\in W_{U_s}}$, where $W_{U_s}$ is an index set, and the $A^{U_s,w}$ are still all uniformly strictly controlled by a single rate matrix $A$, as in the previous section.

To ensure dynamic consistency, we assume that $W_u$ is such that if $w$ and $w'$ are both in $W_{U_s}$, then there is an element $w''\in W_{U_s}$ such that the matrix obtained by interchanging arbitrary columns of $A^{u,w}$ and $A^{u, w'}$ is equal to $A^{u, w''}$.

In this case, we could attempt to minimise the maximum over the relevant measures (or some other nonlinear functional) corresponding to the value functional
\[J(x,u) = {\lim\sup}_{T\to\infty} T^{-1} \sup_{w\in W_{U_s}}E^{U,w}_x\Big[\int_{]0,T]} L(X_t, U_t) dt\Big],\]
(or equivalently with a $\lim\inf$). This can be directly treated by our setting, simply by taking the non-concave Hamiltonian
\[f(u;x,z) = \inf_{u\in\U}\big\{L(x, u)  + \sup_{w\in W_u}\{z^*(A^{u,w}-A)x\}\big\}\]
and solving the corresponding nonlinear EBSDE. As above, one can verify that $J(x,u)$ is then the $\lambda$ component of the solution to the EBSDE with driver $f(u,\cdot,\cdot)$.  Extending Theorem \ref{thm:optimalcontrol} to this setting is then a straightforward task.

\begin{remark}
 In \cite{Cohen2011a}, conditions are given under which a dynamically consistent nonlinear expectation in a general setting can be represented by means of a BSDE with a balanced driver. Suppose each choice of policy yielded a nonlinear expectation. For each policy which makes $X$ a homogeneous Markov process (in the sense that the conditional nonlinear expectation of $\phi(X_t)$ given $\F_s$ is a function of $X_s$ and $t-s$, for any measurable $\phi$ and any $s\leq t$), the drivers of these BSDEs can be written in the form $g_u:\X\times\bR^N\to\bR$, and assume that these drivers are strictly balanced. Given this representation, the Hamiltonian for our cost minimization problem has the generic form
\[f(u; x, z) = \inf_{u\in\U} \{L(x, u) + g_u(x,z)\}.\]
\end{remark}

\section{Conclusions}\label{sec:conclusion}

We have seen that, for uniformly ergodic Markov chains, EBSDEs with strictly balanced drivers admit unique bounded Markovian solutions. The methods used to determine this result, while based on previous work on EBSDEs, require a different approach to the ergodicity of the underlying process, due to the presence of jumps.

In deriving this, we have constructed a partial ordering of the rate matrices, and shown that any chain bounded below by a uniformly ergodic chain must also be uniformly ergodic, and that the rate coefficients can be uniformly bounded. In some sense, this result is similar to recent work by Galtchouk and Pergamenshchikov \cite{Galtchouk2012}, who study geometric ergodicity properties of general Markov processes, under an assumption of a uniform Lyapunov function. On the other hand, our method is better suited to the study of EBSDEs, where the perturbation of the rate matrix arises directly from the driver of the BSDE. Future work may allow a weakening of our assumption of uniform ergodicity to a form of geometric ergodicity, however we expect that this will require some restriction of the class of Markov chains (for example, to stochastically monotone chains). Such techniques have been used for ergodic costs from a classical control perspective, see for example Guo and Hern\'andez-Lerma \cite{Guo2001}. Similarly it may be possible to weaken the assumption of strictly balanced  EBSDE drivers to either balanced or weakly balanced.

The applications of the theory of EBSDEs are still in development, and the explicit computability of solutions to these equations, in terms of solving a single nonlinear vector equation, is of some interest. The consequences for risk sensitive ergodic control, and properties of the convergence of these solutions to the diffusion case, remain to be explored.

\bibliographystyle{plain}
\bibliography{../../RiskPapers/General}
\end{document}